\newcommand{\on}{\operatorname}
\newcommand{\mbf}{\mathbf}
\def\C{{\mathbb C}}
\def\Z{{\mathbb Z}}
\def\N{{\mathbb N}}
\def\1{{\bf 1}}
\def \tr{{\rm tr}}
\def \End{{\rm End}}
\def \Hom{{\rm Hom}}
\def \<{\langle}
\def \>{\rangle}
\def \w{\omega}
\def \h{\mathfrak{h}}
\def \w{\omega}
\numberwithin{equation}{section}
\newtheorem{theorem}{Theorem}[section]
\newtheorem{prop}[theorem]{Proposition}
\newtheorem{lem}[theorem]{Lemma}
\newtheorem{cor}[theorem]{Corollary}
\theoremstyle{definition}
\newtheorem{defn}[theorem]{Definition}
\newtheorem{remark}[theorem]{Remark}
\newtheorem{proofof}[theorem]{}
\begin{document}
\title[Heisenberg Vertex Operator Algebras]{The varieties of Heisenberg vertex operator algebras}

\author{ Yanjun Chu}
\address[Chu]{School of Mathematics and Statistics,  Henan University, Kaifeng, 475004, China}
\email{chuyj@henu.edu.cn}

\author{Zongzhu Lin}
\address[Lin]{ Department of Mathematics, Kansas State University, Manhattan, KS 66506, USA}
\email{zlin@math.ksu.edu}
\begin{abstract}
For a vertex operator algebra $V$ with conformal vector $\omega$, we consider a class of vertex operator subalgebras
and their conformal vectors. They are called semi-conformal vertex operator subalgebras and semi-conformal vectors of  $(V,\omega)$, respectively, and  were used to study duality theory of vertex operator algebras via coset constructions. Using these objects attached to $(V,\omega)$,  we shall understand the structure of the vertex operator algebra $(V,\omega)$. At first, we define the set $\on{Sc}(V,\omega)$ of semi-conformal vectors of $V$,  then we prove that $\on{Sc}(V,\omega)$ is an affine algebraic variety with a partial ordering and an involution map. Corresponding to each semi-conformal vector, there is a unique maximal semi-conformal vertex operator subalgebra containing it. The properties of these subalgebras are invariants of vertex operator algebras.  As an example, we describe the corresponding varieties of  semi-conformal vectors for Heisenberg vertex operator algebras. As an application, we give two characterizations  of Heisenberg vertex operator algebras using the properties of  these varieties.
\end{abstract}
\subjclass[2010]{17B69, 81T40}

\maketitle
\section{Introduction}

\subsection{}
 A vertex operator algebra is an algebraic structure that plays an important role in conformal field theory and string theory. In addition to physical applications, vertex operator algebras have been proven useful in purely mathematical contexts such as in representation theory of affine Lie algebras  and the geometric Langlands programs. This paper is intended to understand properties of vertex operator algebras in terms of a certain geometric objects attached to them.
\subsection{}
In  \cite{JL2}, Jiang and the second author  used a class of subalgebras for a vertex operator algebra $V$ to study level-rank duality in vertex operator algebra theory. For a vertex operator algebra $V$ with  conformal vector $\omega$,  a weight-two vector $\omega'$ is said to be a semi-conformal vector if $\omega'$ is the conformal vector of a vertex operator subalgebra $U$ (with possibly different conformal vectors as in \cite[Section 3.11.6]{LL}) such that $\omega_n|_U=\omega'_n|_U$ for all $n\geq 0$ and such a vertex operator subalgebra  $U$ is called a semi-conformal subalgebra of $V$. Let  $\on{Sc}(V,\omega)$ be the set of semi-conformal vectors of $V$.
\begin{theorem}
 For a vertex operator algebra $(V,\omega)$, the set $\on{Sc}(V,\omega)$  is an affine algebraic variety over $\mathbb{C}$.
\end{theorem}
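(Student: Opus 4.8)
The plan is to realize $\on{Sc}(V,\omega)$ as the common zero locus of a family of polynomial functions on the \emph{finite}-dimensional space $V_2$. Since every semi-conformal vector has weight two, $\on{Sc}(V,\omega)\subseteq V_2$; fixing a basis identifies $V_2$ with $\C^N$ and its coordinate ring with a polynomial algebra, which is Noetherian, so it suffices to produce \emph{some} set of polynomial equations in the coordinates of $\omega'\in V_2$ whose solution set is exactly $\on{Sc}(V,\omega)$. The entire task therefore reduces to translating the definition of ``semi-conformal'' into such equations.

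First I would eliminate the auxiliary subalgebra from the definition. If $\omega'$ is semi-conformal via a vertex operator subalgebra $U$, then the subalgebra $\langle\omega'\rangle$ generated by $\omega'$ lies in $U$, inherits from $U$ the structure of a vertex operator subalgebra with conformal vector $\omega'$, and still satisfies $\omega_n|_{\langle\omega'\rangle}=\omega'_n|_{\langle\omega'\rangle}$ for all $n\ge 0$; conversely $\langle\omega'\rangle$ is always an admissible choice of $U$. Thus $\omega'$ is semi-conformal iff $\omega'$ is a Virasoro vector of $V$ and $\theta_n|_{\langle\omega'\rangle}=0$ for all $n\ge 0$, where $\theta=\omega-\omega'$. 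Next I would prove a propagation lemma: the latter condition is equivalent to the \emph{finitely many} identities $\theta_j\omega'=0$ for $0\le j\le 3$ (for $j\ge 4$ one has $\theta_j\omega'\in V_{3-j}=0$ automatically). Indeed, if $\theta_i\omega'=0$ for all $i\ge 0$, then the commutator formula $[\theta_j,\omega'_k]=\sum_{i\ge 0}\binom{j}{i}(\theta_i\omega')_{j+k-i}$ shows that $\theta_j$ commutes with every mode of $\omega'$ for $j\ge 0$; since also $\theta_j\mathbf 1=0$ for $j\ge 0$ and $\langle\omega'\rangle$ is spanned by vectors $\omega'_{k_1}\cdots\omega'_{k_r}\mathbf 1$, it follows that $\theta_j|_{\langle\omega'\rangle}=0$, while the reverse implication is trivial as $\omega'\in\langle\omega'\rangle$.

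It remains to see that ``$\omega'$ is a Virasoro vector'' is also polynomial. Using the commutator formula, skew-symmetry (which determines $\omega'_0\omega'$ once the lower products $\omega'_j\omega'$, $1\le j\le 3$, are fixed), and the $\omega_0$-derivative property $(\omega_0 v)_k=-k\,v_{k-1}$, one computes $[\omega'_{m+1},\omega'_{n+1}]=(m-n)\,\omega'_{m+n+1}+\binom{m+1}{3}(\omega'_3\omega')_{m+n-1}$, so that $Y(\omega',z)$ generates a Virasoro algebra exactly when $\omega'_1\omega'=2\omega'$, $\omega'_2\omega'=0$ and $\omega'_3\omega'\in\C\mathbf 1$ (the relations $\omega'_n\omega'=0$ for $n\ge 4$ holding for free by weight). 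Here $\omega'_1\omega'$ and $\omega'_2\omega'$ are quadratic in $\omega'$, ``$\omega'_3\omega'\in\C\mathbf 1$'' means $\omega'_3\omega'\wedge\mathbf 1=0$ (again quadratic), and $\omega_j\omega'$ is linear in $\omega'$. Combining everything, I would conclude that $\on{Sc}(V,\omega)$ is the set of $\omega'\in V_2$ satisfying $\omega'_1\omega'=2\omega'$, $\omega'_2\omega'=0$, $\omega'_3\omega'\wedge\mathbf 1=0$, and $\omega_j\omega'=\omega'_j\omega'$ for $0\le j\le 3$ — finitely many polynomial equations on $V_2$ — hence an affine algebraic variety over $\C$. (If one assumes $V_0=\C\mathbf 1$, as is customary, then $\omega'_2\omega'=0$ and $\omega'_3\omega'\in\C\mathbf 1$ follow automatically from $\omega'_1\omega'=2\omega'$ by skew-symmetry, so the description simplifies.)

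The step I expect to be the main obstacle is the first reduction together with the propagation lemma: one must check carefully that a general witnessing subalgebra $U$ can be replaced by $\langle\omega'\rangle$ (so that $\on{Sc}(V,\omega)$ does not depend on auxiliary choices of $U$), and that the a priori infinitely many conditions concealed in ``$U$ is a vertex operator subalgebra with conformal vector $\omega'$'' and ``$\omega_n|_U=\omega'_n|_U$ for all $n\ge 0$'' genuinely collapse to the finitely many polynomial identities above; identifying the Virasoro-vector locus is a standard but slightly delicate computation. Once these are in place, the algebro-geometric conclusion is immediate.
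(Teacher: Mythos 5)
Your proposal is correct and follows essentially the same route as the paper: both reduce the definition of a semi-conformal vector to a system of quadratic and linear identities on $\omega'\in V_2$ (the paper's Proposition 2.2, which you re-derive self-containedly via the commutator formula and skew-symmetry rather than citing Lepowsky--Li) and then observe that these identities cut out a Zariski closed subset of the finite-dimensional space $V_2$. The only caveat is your parenthetical claim that $\theta_j\omega'\in V_{3-j}=0$ for $j\ge 4$: the paper allows $\Z$-graded $V$ with possibly nonzero negative-weight subspaces, so those equations need not be vacuous --- but they are still polynomial in the coordinates of $\omega'$, and, as you note at the outset, an arbitrary family of polynomial equations still defines a closed set (this is exactly how the paper handles it), so nothing is lost.
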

The purpose of this paper is to understand vertex operator algebras   using the  properties of the algebraic variety  $\on{Sc}(V,\omega)$.

Denote the set of semi-conformal subalgebras of $(V,\omega)$ by $\on{ScAlg}(V,\omega)$.
Obviously, we have a surjective map
$$\begin{array}{llll}
\pi:\on{ScAlg}(V,\omega)\longrightarrow \on{Sc}(V,\omega)\\
\hspace{1.5cm}(U,\omega')\longmapsto \omega'.
\end{array}
$$
Since different semi-conformal subalgebras of $(V, \omega)$ can share the same conformal vector $\omega'$, then the fiber $\pi^{-1}(\omega')$ for each $\omega'\in \on{Sc}(V,\omega)$ is the set of semi-conformal subalgebras with the same conformal vector $\omega'$ in $V$. In the fiber $\pi^{-1}(\omega')$, there exists a unique maximal one $U(\omega')$, which is the maximal conformal extension in $V$ of semi-conformal subalgebras with the
conformal vector $\omega'$ and can be realized as the double commutant $C_{V}(C_{V}(<\omega'>))$ of $<\omega'>$ in $V$, where $<\omega'>$ is a Virasoro vertex operator algebra generated by $\omega'$.  In fact,
semi-conformal vectors classify semi-conformal  subalgebras of $V$ up to conformal extensions. Thus, by studying the fiber $\pi^{-1}(\omega')$  for each $\omega'\in \on{Sc}(V,\omega)$, we can classify the set $\on{ScAlg}(V,\omega)$.
\subsection{}
For any two semi-conformal vectors $\omega',\omega''$ of $(V,\omega)$, we  can  define  a partial ordering $\omega'\preceq\omega''$ if $U(\omega')\subseteq U(\omega'')$. On the other hand, according to the commutant  properties of
 a vertex operator algebra, we have an operator $C_V$ on $\on{ScAlg}(V,\omega)$
 as follows
 $$
 \begin{array}{lllll}
 C_V:\on{ScAlg}(V,\omega)\longrightarrow \on{ScAlg}(V,\omega)\\
 \hspace{1.85cm}(U,\omega')\longmapsto C_{V}(U).
 \end{array}
 $$
 Then $C_V$ induces an involution operator $\omega-$ on $\on{Sc}(V,\omega)$ as follows
 $$
 \begin{array}{lllll}
 \omega- :\on{Sc}(V,\omega)\longrightarrow \on{Sc}(V,\omega)\\
 \hspace{2.0cm}\omega'\longmapsto \omega-\omega'.
 \end{array}
 $$
In fact,  the following  diagram commutes
 $$\begin{CD}
 \on{ScAlg}(V,\omega)@>C_V>> \on{ScAlg}(V,\omega)\\
 @VV\pi V      @VV\pi V\\
 \on{Sc}(V,\omega)@>\omega->> \on{Sc}(V,\omega).
 \end{CD}
 $$
These operators depend on coset constructions of a vertex operator algebra and reflect certain duality constructions in representations of a vertex operator algebra similar to duality pairs of Lie groups (\cite[Section 5]{JL2}).

We also note that the group $G=\on{Aut}(V, \omega)$ (the automorphism group of  conformal vector being  preserved) acts on both $\on{ScAlg}(V, \omega)$ and $\on{Sc}(V, \omega)$ and the map $\pi$ is $G$-equivariant.
\subsection{}
 Based on above approaches of studying vertex operator algebras, we take Heisenberg vertex operator algebras as examples. Let $\h$ be a $d$-dimensional orthogonal space, i.e, $\h$ is a vector space equipped with a nondegenerate symmetric bilinear form $\langle \cdot, \cdot\rangle$.  Then $\h$ generates a Heisenberg vertex operator algebra $V_{\widehat{\h}}(1,0)$ of level $1$ with a  fixed conformal vector $\omega=\omega_{\Lambda}$ with $\Lambda=0$ (see Section~\ref{sec3.1}).  Let $\h'\subset \h$ be a subspace of $\h$. If the bilinear form  $\langle\cdot, \cdot\rangle|_{\h'}$ on $\h$ is still nondegenerate, we say $\h'$ is a {\em regular subspace} of $\h$. Set $$\on{Reg}(\h)=\{\h'\;|\;\h'\; \text{is a regular subspace of } \; \h\}.$$
For each $\omega'\in \on{Sc}(V_{\widehat{\h}}(1,0),\omega)$, we will construct   a unique symmetric idempotent matrix $A_{\omega'}$, which
 defines a linear transformation $\mathcal{A}_{\omega'}: \h\rightarrow \h $ with respect to the fixed orthonormal basis of $\h$ and thus corresponds to two regular subspaces $\on{Im}\mathcal{A}_{\omega'}$ and $ \on{Ker}\mathcal{A}_{\omega'}$ of $\h$ giving an orthogonal decomposition $\on{Im}\mathcal{A}_{\omega'}\oplus \on{Ker}\mathcal{A}_{\omega'} =\h$.  
 
 We note that that group $\on{Aut}(V_{\widehat{\h}}(1,0))=\on{O}(\h)$  acts on $\on{Reg}(\h)$ naturally. We have the following description of the variety $\on{Sc}(V_{\widehat{\h}}(1,0),\omega)$. Also $\on{Reg}(\h)$ is a partially ordered set under the subspace inclusion relation.

 \begin{theorem} \label{thm1.2}
 \begin{itemize}
\item [1)] The map $\rho: \omega'\mapsto \on{Im}(\mathcal{A}_{\omega'})$ is an ordering preserving $\on{Aut}(V_{\widehat{\h}}(1,0), \omega)$-equivariant bijection  form $\on{Sc}(V_{\widehat{\h}}(1,0),\omega)$ to $\on{Reg}(\h)$;

\item [2)]
  $\on{Sc}(V_{\widehat{\h}}(1,0),\omega) $ has exactly
 $d+1$ orbits under the group $\on{Aut}(V_{\widehat{\h}}(1,0),\omega)$-action and  each $0\leq i \leq d$ corresponds to the orbit
 $$\on{Sc}(V_{\widehat{\h}}(1,0),\omega)_i=\{\h'\subset \h| \h'~is~ a ~regular~ subspace~ of ~\h~ with~\on{dim}\h'=i\};$$

\item [3)] There exists a longest chain in $\on{Sc}(V_{\widehat{\h}}(1,0),\omega)$
    such that the length of this chain  equals to $d$:
 there exist $\omega^1,\cdots,\omega^{d-1}\in \on{Sc}(V_{\widehat{\h}}(1,0),\omega)$ such that
 $$0=\omega^0\prec \omega^1\prec\cdots\prec\omega^{d-1}\prec \omega^d=\omega.$$
 \end{itemize}
 \end{theorem}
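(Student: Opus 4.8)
The plan is to deduce all three statements from a single ``dictionary'': the $\on{Aut}(V_{\widehat{\h}}(1,0),\omega)$-equivariant identification, compatible with the partial orderings, between $\on{Sc}(V_{\widehat{\h}}(1,0),\omega)$ and $\on{Reg}(\h)$. Its construction rests on the analysis of semi-conformal vectors of the Heisenberg vertex operator algebra already needed to produce the matrix $A_{\omega'}$. Concretely, I would first establish, in the structural section preceding the theorem: (i) $\omega'\mapsto A_{\omega'}$, equivalently $\omega'\mapsto\mathcal A_{\omega'}$, is a bijection from $\on{Sc}(V_{\widehat{\h}}(1,0),\omega)$ onto the set of symmetric idempotent operators on $\h$, the vector $\omega'$ being the diagonal Sugawara element $\tfrac12\sum_i u_i(-1)^2\mathbf 1$ of the regular subspace $\h'=\on{Im}\mathcal A_{\omega'}$ for any orthonormal basis $\{u_i\}$ of $\h'$, with $0$ and $\omega$ corresponding to $\mathcal A_{\omega'}=0$ and $\mathcal A_{\omega'}=\mathrm{id}$; and (ii) the unique maximal semi-conformal subalgebra $U(\omega')=C_V(C_V(\langle\omega'\rangle))$ equals the sub-Heisenberg vertex operator algebra $V_{\widehat{\h'}}(1,0)\subseteq V_{\widehat{\h}}(1,0)$. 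Statement (ii) follows from the tensor decomposition $V_{\widehat{\h}}(1,0)\cong V_{\widehat{\h'}}(1,0)\otimes V_{\widehat{\h'^{\perp}}}(1,0)$, the commutant identity $C_{V_{\widehat{\h}}(1,0)}(V_{\widehat{\h'}}(1,0))=V_{\widehat{\h'^{\perp}}}(1,0)$ (which uses $h(0)=0$ on the level-one module), and the equality $(\h'^{\perp})^{\perp}=\h'$ valid because $\h'$ is regular.

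Given (i)--(ii), part~1) is linear algebra. If $\mathcal A$ is a symmetric idempotent operator then $\h=\on{Im}\mathcal A\oplus\on{Ker}\mathcal A$ is an orthogonal decomposition on whose summands the form is nondegenerate, so $\on{Im}\mathcal A\in\on{Reg}(\h)$; conversely a regular subspace $\h'$ yields the orthogonal projection onto $\h'$ along $\h'^{\perp}$, a symmetric idempotent with image $\h'$. Hence $\mathcal A\mapsto\on{Im}\mathcal A$ is a bijection onto $\on{Reg}(\h)$, and precomposing with $\omega'\mapsto\mathcal A_{\omega'}$ shows $\rho$ is a bijection. Ordering: $\omega'\preceq\omega''$ means $U(\omega')\subseteq U(\omega'')$, i.e.\ $V_{\widehat{\h'}}(1,0)\subseteq V_{\widehat{\h''}}(1,0)$ with $\h'=\rho(\omega')$, $\h''=\rho(\omega'')$; comparing weight-one subspaces, this is equivalent to $\h'\subseteq\h''$, so $\rho$ is an isomorphism of posets. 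Equivariance: $\on{Aut}(V_{\widehat{\h}}(1,0),\omega)=\on{O}(\h)$ acts on the weight-two space of $V_{\widehat{\h}}(1,0)$ and carries the semi-conformal vector with operator $\mathcal A$ to the one with operator $g\mathcal A g^{-1}=g\mathcal A g^{T}$, whose image is $g(\on{Im}\mathcal A)$; thus $\rho(g\cdot\omega')=g\cdot\rho(\omega')$.

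Part~2) is then a transitivity statement for the $\on{O}(\h)$-action on $\on{Reg}(\h)$. Orthogonal maps preserve dimension, so the orbits refine the stratification by $\on{dim}\h'$. Conversely, two regular subspaces $\h_1,\h_2$ with $\on{dim}\h_1=\on{dim}\h_2=i$ carry nondegenerate symmetric bilinear forms which, over $\C$, are both equivalent to the standard one, hence to each other; any isometry $\h_1\to\h_2$ then extends, by Witt's extension theorem, to an element of $\on{O}(\h)$ taking $\h_1$ to $\h_2$. Each value $0\le i\le d$ occurs (take the span of $i$ vectors of an orthonormal basis), so there are exactly $d+1$ orbits, the orbit with label $i$ consisting of all regular subspaces of dimension $i$; transporting back through $\rho$ gives the description of $\on{Sc}(V_{\widehat{\h}}(1,0),\omega)_i$. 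For part~3), fix an orthonormal basis $u_1,\dots,u_d$ of $\h$ and set $\h^{i}=\on{span}\{u_1,\dots,u_i\}$; each $\h^{i}$ is regular, and $0=\h^{0}\subsetneq\h^{1}\subsetneq\cdots\subsetneq\h^{d}=\h$, so $\omega^{i}:=\rho^{-1}(\h^{i})$ gives the chain $0=\omega^{0}\prec\omega^{1}\prec\cdots\prec\omega^{d}=\omega$ of length $d$. It is of maximal length because, by part~1), any chain in $\on{Sc}(V_{\widehat{\h}}(1,0),\omega)$ corresponds to a chain of regular subspaces, whose dimensions form a strictly increasing sequence in $\{0,1,\dots,d\}$ and hence has at most $d+1$ terms.

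\textbf{The main obstacle.} Everything above reduces to routine linear algebra and Witt's theorem once the dictionary (i)--(ii) is available, so the genuine work is precisely in establishing it: proving that no semi-conformal vector of $V_{\widehat{\h}}(1,0)$ has a nonzero $h(-2)\mathbf 1$-component or a non-idempotent quadratic part --- which one extracts from $\omega'_1\omega'=2\omega'$ together with the semi-conformality conditions $\omega_n|_{U}=\omega'_n|_U$ for $n\ge 0$ --- and computing commutants of Heisenberg subalgebras, where $h(0)=0$ on the level-one module and the tensor-product factorization enter. A secondary point to verify carefully is that the $\on{O}(\h)$-action on $\on{Sc}(V_{\widehat{\h}}(1,0),\omega)$ really is intertwined with conjugation of symmetric idempotents, i.e.\ $A_{g\cdot\omega'}=gA_{\omega'}g^{T}$, so that $\rho$ is genuinely equivariant.
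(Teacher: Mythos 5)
Your proposal is correct, and its overall skeleton --- classify semi-conformal vectors by symmetric idempotent operators, pass to regular subspaces via $\mathcal{A}\mapsto\on{Im}\mathcal{A}$, and then analyze $\on{O}(\h)$-orbits --- is exactly the paper's (Propositions 3.1, 3.2, Corollary 3.3, and the short proof of Theorem 1.2 at the end of Section 3). The one genuine divergence is how the partial ordering is transported. The paper stays entirely at the level of the matrices: it characterizes $\omega^1\preceq\omega^2$ by the relations of Proposition 2.8, extracts from them the anticommutator identity $\mathcal{A}_1\mathcal{A}_2+\mathcal{A}_2\mathcal{A}_1=2\mathcal{A}_1$, and shows this forces $\mathcal{A}_1\mathcal{A}_2=\mathcal{A}_2\mathcal{A}_1=\mathcal{A}_1$, hence $\on{Im}\mathcal{A}_1\subseteq\on{Im}\mathcal{A}_2$ (Lemma 3.5 and Proposition 3.6). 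You instead identify the maximal semi-conformal subalgebra $C_V(C_V(\langle\omega'\rangle))$ with the sub-Heisenberg algebra $V_{\widehat{\h'}}(1,0)$ and compare weight-one subspaces; that identification is really the content of the paper's Theorem 1.3 (Proposition 4.1 and equations (3.17)--(3.18)), proved only in Section 4, so your route makes Theorem 1.2 depend on commutant computations the paper deliberately postpones, whereas the paper's matrix argument keeps the two theorems independent. Your version is more conceptual but costs the tensor-decomposition and commutant lemmas up front; the paper's is more computational but self-contained within Section 3. On part 2) you supply the Witt-extension argument for transitivity of $\on{O}(\h)$ on regular subspaces of a fixed dimension, which the paper asserts without proof, and your part 3) (explicit coordinate chain plus the dimension-count bound) matches what the paper means by deducing 3) from 2). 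You correctly flag that the real work is Proposition 3.1's classification of semi-conformal vectors (in particular ruling out $h(-2)\mathbf{1}$-components when $\Lambda=0$) --- that computation is the common input to both arguments and is where any complete write-up must spend its effort.
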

 For each $ \omega' \in \on{Sc}(V_{\widehat{\h}}(1,0),\omega)$, each of the  abelian Lie algebras $\on{Im}\mathcal{A}_{\omega'}$ and $ \on{Ker}\mathcal{A}_{\omega'}$    generates a Heisenberg vertex operator subalgebra in $V_{\widehat{\h}}(1,0)$. In fact, they are both semi-conformal subalgebras of $(V_{\widehat{\h}}(1,0),\omega)$ and can be both realized as commutant subalgebras of $(V_{\widehat{\h}}(1,0),\omega)$.


\begin{theorem}\label{thm1.3}
For each  $\omega'\in \on{Sc}(V_{\widehat{\h}}(1,0),\omega)$, the following assertions hold. 
\begin{itemize}
\item[1)] $\on{Im}\mathcal{A}_{\omega'}$ generates a Heisenberg vertex operator algebra $$ V_{\widehat{\on{Im}\mathcal{A}_{\omega'}}}(1,0)=C_{V_{\widehat{\h}}(1,0)}(<\omega-\omega'>)$$
and $\on{Ker}\mathcal{A}_{\omega'}$ generates a Heisenberg vertex operator algebra $$V_{\widehat{\on{Ker}\mathcal{A}_{\omega'}}}(1,0)=C_{V_{\widehat{\h}}(1,0)}(<\omega'>);$$

\item[2)]
$C_{V_{\widehat{\h}}(1,0)}(V_{\widehat{\on{Ker}\mathcal{A}_{\omega'}}}(1,0))=V_{\widehat{\on{Im}\mathcal{A}_{\omega'}}}(1,0);\;  C_{V_{\widehat{\h}}(1,0)}(V_{\widehat{\on{Im}\mathcal{A}_{\omega'}}}(1,0)))=V_{\widehat{\on{Ker}\mathcal{A}_{\omega'}}}(1,0);$

\item[3)] $ V_{\widehat{\h}}(1,0)\cong C_{V_{\widehat{\h}}(1,0)}(<\omega'>)\otimes C_{V_{\widehat{\h}}(1,0)}(C_{V_{\widehat{\h}}(1,0)}(<\omega'>)).$
\end{itemize}
\end{theorem}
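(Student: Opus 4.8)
The plan is to reduce everything to the concrete linear algebra of the symmetric idempotent $A_{\omega'}$ and the orthogonal decomposition $\h = \on{Im}\mathcal{A}_{\omega'}\oplus\on{Ker}\mathcal{A}_{\omega'}$ provided by the construction preceding Theorem~\ref{thm1.2}. First I would recall the explicit form of a semi-conformal vector in the Heisenberg algebra: writing $\omega' = \frac{1}{2}\sum_{i,j}(A_{\omega'})_{ij}\,h_i(-1)h_j(-1)\1$ with respect to the fixed orthonormal basis $\{h_i\}$, the condition $\omega'\in\on{Sc}(V_{\widehat{\h}}(1,0),\omega)$ forces $A_{\omega'}$ to be a symmetric idempotent, hence $\mathcal{A}_{\omega'}$ is the orthogonal projection onto the regular subspace $\h' := \on{Im}\mathcal{A}_{\omega'}$, and $\omega'$ is exactly the standard Virasoro vector $\omega_{\h'}$ of the sub-Heisenberg algebra $V_{\widehat{\h'}}(1,0)$ built from $\h'$. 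Symmetrically, $\omega-\omega' = \frac12\sum_{i,j}(I-A_{\omega'})_{ij}h_i(-1)h_j(-1)\1$ is the Virasoro vector $\omega_{\h''}$ of $V_{\widehat{\h''}}(1,0)$ where $\h'' := \on{Ker}\mathcal{A}_{\omega'}$.

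For part 1) I would compute the commutant $C_{V_{\widehat{\h}}(1,0)}(<\omega'>)$ directly. Since $<\omega'>$ is the Virasoro subalgebra generated by $\omega'=\omega_{\h'}$, a vector $v\in V_{\widehat{\h}}(1,0)$ lies in the commutant iff $\omega'_0 v = 0$, i.e. iff $v$ is annihilated by the translation operator $L_{-1}^{\h'}$ coming from $\h'$; the standard PBW analysis of $V_{\widehat{\h}}(1,0) \cong V_{\widehat{\h'}}(1,0)\otimes V_{\widehat{\h''}}(1,0)$ (using the orthogonal decomposition $\h=\h'\oplus\h''$ and the fact that the two families of Heisenberg generators commute) shows this kernel is precisely $\1\otimes V_{\widehat{\h''}}(1,0) = V_{\widehat{\on{Ker}\mathcal{A}_{\omega'}}}(1,0)$. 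Here I should invoke the known commutant computation for Heisenberg algebras (the commutant of a sub-Heisenberg algebra attached to a regular subspace is the sub-Heisenberg algebra attached to the orthogonal complement — this is in the literature on coset constructions, e.g. the work relating to lattice and free-field realizations) rather than rederiving it. Applying the same to $\omega-\omega' = \omega_{\h''}$ gives $C_{V_{\widehat{\h}}(1,0)}(<\omega-\omega'>) = V_{\widehat{\on{Im}\mathcal{A}_{\omega'}}}(1,0)$, which is part 1); note these commutants are themselves Heisenberg VOAs, hence again in $\on{ScAlg}$.

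Part 2) is then the double-commutant statement: $C_{V_{\widehat{\h}}(1,0)}(V_{\widehat{\h''}}(1,0))$. Since $V_{\widehat{\h''}}(1,0)$ is generated by the Heisenberg fields $\{h(-1)\1 : h\in\h''\}$, the commutant consists of vectors commuting with all $h(n)$, $h\in\h''$; by the same tensor-factorization argument this is exactly $V_{\widehat{\h'}}(1,0) = V_{\widehat{\on{Im}\mathcal{A}_{\omega'}}}(1,0)$, and symmetrically for the other equality. Part 3) is immediate from the orthogonal decomposition $\h=\h'\oplus\h''$: the corresponding Heisenberg generators mutually commute and freely generate, so $V_{\widehat{\h}}(1,0)\cong V_{\widehat{\h'}}(1,0)\otimes V_{\widehat{\h''}}(1,0)$, and by parts 1)–2) the two tensor factors are $C_{V_{\widehat{\h}}(1,0)}(<\omega'>)$ and its double commutant. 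The main obstacle I anticipate is not conceptual but bookkeeping: one must verify carefully that the ``commutant of a Virasoro subalgebra $<\omega'>$'' coincides with the ``commutant of the whole sub-Heisenberg VOA $V_{\widehat{\h'}}(1,0)$'' — i.e. that commuting with $L_0^{\h'}, L_{\pm1}^{\h'}$ (or just $\omega'_0$) already forces commuting with all $h(n)$, $h\in\h'$ — which uses that $\h'$ sits inside $V_{\widehat{\h'}}(1,0)$ as weight-one vectors reconstructible from $\omega'$ via the Heisenberg structure; pinning down this step precisely, and keeping the identification of $A_{\omega'}$ with an honest orthogonal projection consistent throughout, is where the real care is needed.
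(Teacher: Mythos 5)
Your proposal is correct and follows essentially the same route as the paper: using the idempotency of $A_{\omega'}$ to recognize $\omega'$ and $\omega-\omega'$ as the standard conformal vectors of the sub-Heisenberg algebras on $\on{Im}\mathcal{A}_{\omega'}$ and $\on{Ker}\mathcal{A}_{\omega'}$, characterizing the commutant as $\on{Ker}\,L'(-1)$ via \cite[Proposition 3.11.11]{LL}, and concluding from the tensor decomposition $V_{\widehat{\h}}(1,0)\cong V_{\widehat{\on{Im}\mathcal{A}_{\omega'}}}(1,0)\otimes V_{\widehat{\on{Ker}\mathcal{A}_{\omega'}}}(1,0)$. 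The technical point you flag at the end (that killing $\omega'_0$ already forces commuting with the whole sub-Heisenberg algebra) is resolved exactly by that tensor factorization, as in the paper.
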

\subsection{}\label{sec1.5}
Based on  above results (See Theorem\ref{thm1.2} and Theorem~\ref{thm1.3}),
we can give two characterizations of the Heisenberg vertex operator algebra $(V_{\widehat{\h}}(1,0),\omega)$.

  In this paper, a vertex operator algebra $(V,\omega)$ will satisfy the following conditions:
(1) $V$ is a {\em simple CFT type} vertex operator algebra (i.e., $V$ is $\N$-graded and $V_0=\C \mbf{1}$); (2) The symmetric bilinear form $<u,v>=u_1v$ for $u,v\in V_1$ is  nondegenerate. For convenience, we call  such a vertex operator algebra $(V,\omega)$   {\em nondegenerate simple CFT type. } We note that for any vertex operator algebra $(V, \omega)$ and any $\omega' \in \on{Sc}(V,\omega)$, one has
$C_{V}(C_{V}(<\omega'>))\otimes C_{V}(<\omega'>)\subseteq V$ as a conformal vertex operator subalgebra.

\begin{theorem} \label{thm1.4}
Let $(V,\omega)$ be a nondegenerate simple CFT type vertex operator algebra generated by $V_1$. Assume that $ L(1)V_1=0$. 
If for
each $\omega'\in \on{Sc}(V,\omega)$, there are
\begin{equation} \label{e5.5}V\cong C_{V}(C_{V}(<\omega'>))\otimes C_{V}(<\omega'>)\end{equation}
then $(V,\omega)$ is isomorphic to the Heisenberg vertex operator algebra $(V_{\hat{\h}}(1,0), \omega_{\Lambda})$ with $\h=V_1$ and $ \Lambda=0$. 
\end{theorem}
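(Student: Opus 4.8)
\emph{Plan of proof.} The aim is to force the Lie algebra $\mathfrak g:=V_1$, with bracket $[u,v]=u_0v$ and invariant form $\langle u,v\rangle\mathbf 1=u_1v$, to be abelian; the conclusion is then immediate. First I would invoke the structure theory available under our hypotheses (simplicity, CFT type, $V=\langle V_1\rangle$, nondegeneracy of $\langle\cdot,\cdot\rangle$ on $V_1$, and $L(1)V_1=0$): $\mathfrak g$ is reductive, $\mathfrak g=\mathfrak z\oplus\bigoplus_{i=1}^{s}\mathfrak g_i$ with $\mathfrak z$ the centre and each $\mathfrak g_i$ simple; the conformal vector splits as $\omega=\omega_{\mathfrak z}+\sum_i\omega_{\mathfrak g_i}$, where $\omega_{\mathfrak z}$ is the free-boson conformal vector of the Heisenberg subalgebra generated by $\mathfrak z$ and $\omega_{\mathfrak g_i}$ is the Sugawara vector of the affine subalgebra $\langle\mathfrak g_i\rangle$ at a positive integral level $k_i$; and $V_{\widehat{\mathfrak z}}(1,0)\otimes\bigotimes_i\langle\mathfrak g_i\rangle$ is a conformal subalgebra of $V$. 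If all $\mathfrak g_i=0$ then $V$ is a quotient of the simple Heisenberg VOA $V_{\widehat{V_1}}(1,0)$, hence equal to it, and $L(1)V_1=0$ forces $\Lambda=0$; so it suffices to rule out simple summands.

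Two facts drive the contradiction. First, for any $\omega'\in\on{Sc}(V,\omega)$ coming from a subalgebra generated by elements of $V_1$, one has $C_V(\langle\omega'\rangle)_1=\{v\in V_1:(\omega')_1v=0\}$, which in the cases below is the $\langle\cdot,\cdot\rangle$-orthogonal part of the centralizer in $V_1$ of the relevant weight-one subspace. Second, if the hypothesis gives $V\cong A\otimes B$ with $A=C_V(C_V(\langle\omega'\rangle))$ and $B=C_V(\langle\omega'\rangle)$, then, since $V=\langle V_1\rangle$, each tensor factor is generated by its own weight-one space, while, since $A_1\perp B_1$ for the invariant form, $A_1=\big(C_V(\langle\omega'\rangle)_1\big)^{\perp}$ in $V_1$.

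Now suppose $\mathfrak g_1\ne 0$. If $\mathfrak g_1$ has rank $\ge 2$ I would take $\omega'=\omega_{\mathfrak t}$, the Sugawara vector of an $\mathfrak{sl}_2$-triple $\mathfrak t=\langle e_\alpha,h_\alpha,f_\alpha\rangle\subseteq\mathfrak g_1$ for a root $\alpha$ chosen so that $\mathfrak g_1$ contains a two-dimensional (spin-$\tfrac12$) $\mathfrak t$-submodule; this $\omega'$ is semi-conformal since $\langle\mathfrak t\rangle$ is an affine, hence semi-conformal, subalgebra. Then $A_1$ is the orthocomplement in $\mathfrak g_1$ of the trivial $\mathfrak t$-isotypic part, so it contains a root vector $e_\beta$ spanning such a spin-$\tfrac12$ submodule; from $A=\langle A_1\rangle$ we get $e_\beta(-1)\mathbf 1\in A$, hence its $\omega$-weight $1$ equals its $\omega'$-weight $\tfrac{3}{4(k_\alpha+2)}$, which is impossible for $k_\alpha$ a positive integer. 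If $\mathfrak g_1=\mathfrak{sl}_2$ at level $k_1\ge 2$, I would instead use the rank-one Heisenberg vector $\omega'=\tfrac1{2\langle h_\alpha,h_\alpha\rangle}h_\alpha(-1)^2\mathbf 1$: here $A_1=\mathfrak g_1$, so $h_\alpha(-1)\mathbf 1=(e_\alpha)_0e_{-\alpha}(-1)\mathbf 1\in A=\langle\mathfrak g_1\rangle$ and the $\omega'$-weight of $e_\alpha(-1)\mathbf 1$, which equals $\tfrac1{k_1}$, is forced to be $1$ — a contradiction; and if $\mathfrak z\ne 0$ the same device with the "entangled" current $J=h_\alpha+b$ ($b\in\mathfrak z$ non-isotropic) yields the impossible identity $\langle b,b\rangle=0$.

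The one genuinely delicate case that remains is $\mathfrak g=\mathfrak g_1=\mathfrak{sl}_2$ at level $1$ with $\mathfrak z=0$, i.e. $V=\langle\mathfrak{sl}_2\rangle\cong L_{\widehat{\mathfrak{sl}_2}}(1,0)$, a rank-one lattice vertex operator algebra, in which the Heisenberg and Sugawara vectors of every regular weight-one subspace collapse onto $\omega$ and the mechanism above produces nothing. I expect this — excluding the small affine component whose Sugawara data degenerate — to be the main obstacle, and would treat it by hand, exploiting the lattice structure to exhibit a semi-conformal vector (or another invariant attached to $\on{Sc}$) incompatible with the splitting hypothesis. Once every simple summand is excluded, $\mathfrak g=V_1$ is abelian and, by the first paragraph, $V\cong(V_{\widehat{\mathfrak h}}(1,0),\omega_\Lambda)$ with $\mathfrak h=V_1$ and $\Lambda=0$.
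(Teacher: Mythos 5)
Your overall target --- force the Lie algebra $V_1$ to be abelian and then identify $V$ with $V_{\widehat{V_1}}(1,0)$ --- is the right one, and your route is genuinely different from the paper's (the paper runs an induction on $\dim V_1$, choosing $h\in V_1$ with $\langle h,h\rangle=1$ so that $\dim C_V(<\omega-\omega_h>)_1$ is minimal and deriving a contradiction from the failure of minimality). But your argument has two gaps that are not cosmetic. First, the ``structure theory'' you invoke at the outset --- $V_1$ reductive, each simple summand $\mathfrak{g}_i$ realized at a \emph{positive integral} level $k_i$, and $\omega$ equal to the sum of the free-boson and Sugawara vectors --- is a package of theorems (Dong--Mason) proved for \emph{strongly regular} (rational and $C_2$-cofinite) vertex operator algebras; none of those hypotheses appear in the statement you are proving. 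A nondegenerate simple CFT-type vertex operator algebra generated by $V_1$ with $L(1)V_1=0$ can perfectly well be $L_{\widehat{\mathfrak{sl}}_2}(k,0)$ at an admissible non-integral level, so integrality of the levels cannot be taken for granted here; yet both of your numerical contradictions (the weight $\tfrac{3}{4(k_\alpha+2)}$ of a spin-$\tfrac12$ vector and the weight $\tfrac{1}{k_1}$ of $e_\alpha(-1)\mathbf{1}$) depend on $k$ being a positive integer. If you want integrality you must extract it from the hypothesis \eqref{e5.5}, which you never do. Second, you explicitly defer the case $V_1=\mathfrak{sl}_2$ at level $1$ ``to be treated by hand.'' That case is not a peripheral loose end: it is precisely where the substance of the theorem lives. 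The paper's own proof funnels everything into this situation --- the failure of the inductive step produces, from two orthonormal non-commuting elements $h,h'$ with $\omega_h=\omega_{h'}$, an $\mathfrak{sl}_2$-triple $\{h,h',h''\}$ whose generated subalgebra surjects onto $L_{\widehat{\mathfrak{sl}}_2}(1,0)$, and the contradiction is an explicit PBW computation showing that the images $\tilde{\omega}_h$, $\tilde{\omega}_{h'}$, $\tilde{\omega}_{h''/\sqrt{-2}}$ are three \emph{distinct} semi-conformal vectors there (e.g. $\tilde{\omega}_h-\tilde{\omega}_{h'}=\tfrac12 f(-1)^2\mathbf{1}\neq 0$ modulo $<e(-1)^2\mathbf{1}>$). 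Leaving that computation undone leaves the proof incomplete exactly at its crux.

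If you want to salvage your approach, the fix is to argue locally rather than import global structure theory: any pair of orthonormal elements $h,h'\in V_1$ that fail to commute and satisfy $\omega_h=\omega_{h'}$ already generates the problematic copy of $\widehat{\mathfrak{sl}}_2$ at level $1$, and the distinctness of the three Heisenberg conformal vectors inside $L_{\widehat{\mathfrak{sl}}_2}(1,0)$ is what rules it out; combined with Lemma~\ref{lem5.3} and the tensor-splitting hypothesis this drives the paper's induction without ever needing reductivity or integrality of levels.
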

 For a semi-conformal vector $\omega'$  of a vertex operator algebra $(V,\omega)$, we can define the height and depth of $\omega'$ in $\on{Sc}(V, \omega)$ analogous to those concepts of prime ideals in a commutative ring. This is also one of the motivations of studying the set of all semi-conformal vectors.  Considering the maximal length of  chains of semi-conformal vectors in $V$,  we can give another characterization of  Heisenberg vertex operator algebras.

\begin{theorem} \label{thm1.5}
Let $(V,\omega)$ be a nondegenerate simple CFT type vertex operator algebra  generated by $V_1$. Assume $\on{\dim}V_1=d$ and $ L(1)V_1=0$.  If there exists a chain $0=\omega^0\prec\omega^1\prec\cdots \prec \omega^{d-1}\prec \omega^{d}=\omega$
in $\on{Sc}(V,\omega)$ such that $\on{dim}C_{V}(C_{V}(<\omega^{i}-\omega^{i-1}>))_1\neq 0, ~for~i=1,\cdots, d$, then  $V$ is isomorphic to the Heisenberg vertex operator algebra $(V_{\widehat{\h}}(1,0), \omega_{\Lambda})$ with  $\h=V_1$ and  $ \Lambda=0$. 
\end{theorem}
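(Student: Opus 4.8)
The plan is to extract from the given maximal chain a family of $d$ pairwise commuting semi-conformal vectors $\sigma^1,\dots,\sigma^d$ with $\sum_i\sigma^i=\omega$, each carrying a nonzero weight-one vector $a^i$, and then to show that $a^1,\dots,a^d$ form a basis of $V_1$ exhibiting $V_1$ as an abelian orthogonal space; the isomorphism with $(V_{\widehat{\h}}(1,0),\omega_{\Lambda})$, $\Lambda=0$, then follows from the universal property of the Heisenberg vertex operator algebra, the classification of its conformal vectors, and $L(1)V_1=0$. First I would put $\sigma^i=\omega^i-\omega^{i-1}$, so that $\sum_{i=1}^d\sigma^i=\omega$. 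Since $\omega^{i-1}\preceq\omega^i$ one has $U(\omega^{i-1})\subseteq U(\omega^i)$ and $\omega^{i-1}$ is a semi-conformal vector of $(U(\omega^i),\omega^i)$; then $\sigma^i$ is the image of $\omega^{i-1}$ under the involution of $\on{Sc}(U(\omega^i),\omega^i)$, hence $\sigma^i\in\on{Sc}(U(\omega^i),\omega^i)$, and a semi-conformal vector of a semi-conformal subalgebra of $V$ is again a semi-conformal vector of $V$, so $\sigma^i\in\on{Sc}(V,\omega)$. Write $U_i:=U(\sigma^i)=C_V(C_V(\langle\sigma^i\rangle))$ and $L^{(i)}(n):=(\sigma^i)_{n+1}$, so $(U_i,\sigma^i)$ is a vertex operator algebra with grading operator $L^{(i)}(0)$ and $\sum_iL^{(i)}(n)=\omega_{n+1}$ on $V$.

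The key point is that $U_i$ and $U_j$ commute whenever $i\ne j$. Assume $i<j$, so $\omega^i\preceq\omega^{j-1}$. Working inside $(U(\omega^j),\omega^j)$, the coset property of the involution gives $\sigma^j=\omega^j-\omega^{j-1}\in C_{U(\omega^j)}(\langle\omega^{j-1}\rangle)\subseteq C_V(\langle\omega^{j-1}\rangle)$, and $C_V(\langle\omega^{j-1}\rangle)$ is annihilated from the nonnegative side by $U(\omega^{j-1})=C_V(C_V(\langle\omega^{j-1}\rangle))$. Since $\sigma^i\in U(\omega^i)\subseteq U(\omega^{j-1})$, it follows that $\sigma^i$ and $\sigma^j$ commute, i.e.\ $\langle\sigma^j\rangle\subseteq C_V(\langle\sigma^i\rangle)$. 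Applying $C_V$ twice, and using that $C_V$ reverses inclusions and satisfies $C_VC_VC_V=C_V$, one obtains $U_i\subseteq C_V(\langle\sigma^j\rangle)$ and hence $U_j\subseteq C_V(U_i)$, which is the assertion.

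Next I would recover $V_1$. By hypothesis $(U_i)_1\ne0$; fix $0\ne a^i\in(U_i)_1$. As $U_i$ commutes with $\langle\sigma^j\rangle$ for $j\ne i$ we get $L^{(j)}(0)a^i=0$, while $L^{(i)}(0)a^i=a^i$ since $a^i$ has weight one in $U_i$; summing and using $\sum_jL^{(j)}(0)=L(0)$ gives $L(0)a^i=a^i$, so $a^i\in V_1$ (and $(U_i)_1\subseteq V_1$). The relations $L^{(j)}(0)a^i=\delta_{ij}a^i$ force $a^1,\dots,a^d$ to be linearly independent, hence a basis of the $d$-dimensional space $V_1$. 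For $i\ne j$, $a^i\in U_i$ and $a^j\in U_j$ lie in commuting subalgebras, so $a^i_0a^j=0$; with $a^i_0a^i=0$ this shows the Lie algebra $V_1$ is abelian, and by nondegeneracy of $\langle a,b\rangle=a_1b$ the space $\h:=V_1$ is a $d$-dimensional orthogonal space. Since $V$ is generated by $V_1$, the modes $a_n$ $(a\in V_1)$ make $V$ a level-one $\widehat{\h}$-module with $\mathbf 1$ generating a copy of $V_{\widehat{\h}}(1,0)$; the resulting surjective vertex algebra map $V_{\widehat{\h}}(1,0)\to V$ is injective because $V_{\widehat{\h}}(1,0)$ is simple. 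Thus $V\cong V_{\widehat{\h}}(1,0)$ as vertex algebras, compatibly with the gradings, so $\omega$ corresponds to a conformal vector of $V_{\widehat{\h}}(1,0)$; these are exactly the shifted Virasoro vectors $\omega_{\Lambda}$, and $L(1)V_1=0$ forces $\Lambda=0$, giving $(V,\omega)\cong(V_{\widehat{\h}}(1,0),\omega_{\Lambda})$ with $\h=V_1$, $\Lambda=0$.

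The main obstacle is the commutativity step in the second paragraph: turning the order-theoretic data of a maximal chain in $\on{Sc}(V,\omega)$ into the statement that the successive differences $\sigma^i$ generate mutually commuting subalgebras. This requires assembling, from the earlier sections, the realization of each $U(\omega')$ as a double commutant, the coset property of the involution $\omega-$, and the transitivity of the notion of semi-conformal vector under passage to semi-conformal subalgebras, while keeping careful track of which commutants are taken in $V$ and which in the intermediate algebras $U(\omega^i)$. Once this is in place, the rest is routine; the only further delicate point is pinning down the conformal vector at the end, for which one invokes the classification of conformal vectors of the Heisenberg vertex operator algebra.
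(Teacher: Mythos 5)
Your proof is correct, but it takes a genuinely different route from the paper's. The paper argues by induction on $d$ using Lemma 5.7: the dimension count $\dim V_1=d$ together with $\dim U(\omega^i-\omega^{i-1})_1\geq 1$ forces each weight-one piece to be one-dimensional, Lemma 5.7 then splits off $C_V(\langle\omega^{d-1}\rangle)$ as a tensor factor, and after $d$ steps $V$ is a tensor product of $d$ rank-one factors, each identified with a rank-one Heisenberg vertex operator algebra with $\Lambda=0$ via Proposition 5.4. You bypass the tensor decomposition entirely: you prove that the double commutants $U(\sigma^i)$ of the successive differences pairwise commute, use the joint $L^{(j)}(0)$-eigenvalues to produce a basis $a^1,\dots,a^d$ of $V_1$, conclude that $V_1$ is an abelian orthogonal Lie algebra, and build the rank-$d$ Heisenberg structure on $V$ in one stroke. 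Your commutativity lemma in fact makes explicit what the paper leaves implicit when it asserts that $U(\omega^1)\otimes\cdots\otimes U(\omega^d-\omega^{d-1})$ sits inside $V$ as a conformal subalgebra, so on that point your write-up is the more complete one. One caveat: your closing appeal to "the classification of conformal vectors of the Heisenberg vertex operator algebra" invokes a rank-$d$ classification the paper never proves; it is cleaner, and sufficient, to observe that under the graded vertex-algebra isomorphism $V\cong V_{\widehat{\h}}(1,0)$ the two conditions that $\omega_1$ is the degree operator and that $\omega_2 V_1=0$ already force $\omega=\frac{1}{2}\sum_i h_i(-1)^2\mathbf{1}$ by the same direct mode computation used in Propositions 3.1 and 5.4. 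What each approach buys: the paper's induction also produces the explicit tensor decomposition of $V$ into rank-one pieces (which matches Theorem 1.3 and is reusable), while yours is shorter, avoids Lemma 5.7, and isolates the Lie-algebra content of the hypothesis.
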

Characterizing the Monster Moonshine module by its properties is one of the focuses in conformal field theory. The FLM conjecture in \cite{FLM} is one of the conjectural characterizations of  the Monster Moonshine module.   
\subsection{} By \cite[Lemma 5.1]{JL2}, a vertex subalgebra $U$ of $V$ cannot contain two different conformal vectors $\omega'$ and $\omega''$ such that both $(U, \omega')$ and  $(U, \omega'')$ are semi-conformal vertex operator subalgebras of $ (V, \omega)$. We remark that on a vertex algebra, there can be many different conformal vectors to make it become non-isomorphic vertex operator algebras, even thought they have the same conformal gradations. The result of  \cite[Lemma 5.1]{JL2} says that no one is a semi-conformal with respect to another. \cite[Example 2.5.9]{BF} provides a large number of such examples on the Heisenberg vertex operator algebras.  Thus the map from $\on{ScAlg}(V, \omega)$ to the set of all vertex subalgebras of $V$, forgetting the conformal structure,  is injective.  The conformal vector in $ V$ uniquely determines the semi-conformal structure on a vertex subalgebra of $V$ if there is any.

One of the main  motivations of this work is to investigate the conformal structure on a vertex subalgebra of a vertex operator algebra.  In conformal field theory, the conformal vector completely determines the conformal structure (the module structure for the Virasoro Lie algebra).  In mathematical physics, a vertex operator algebra has been investigated extensively as a Virasoro module (see \cite{DMZ,D,KL,DLM,LY,M,LS,S,L,Sh}) by virtue of conformal vector.

 \subsection{}
This is the first paper of a series of papers investigating the geometric properties of $\on{Sc}(V, \omega)$ and structures of $(V, \omega)$. In an upcoming paper we will consider the cases when $(V, \omega)$ is of affine types and lattice types. As the Heisenberg case indicated here, the variety $\on{Sc}(V, \omega)$ is closely related to the affine Lie algebra with a fixed level as well as to the lattice structure.

 This paper is organized as follow: In Section 2, we review semi-conformal vectors (subalgebras) of a vertex operator algebra according to \cite{JL2}. Then we study  properties of  the variety  of semi-conformal vectors of a vertex operator algebra. In Section 3, we shall describe the variety of semi-conformal vectors of the  Heisenberg vertex operator algebra $(V_{\widehat{\h}}(1,0),\omega_{\Lambda})$ with $\Lambda=0$. In Section 4, we study  maximal semi-conformal subalgebras  determined by semi-conformal vectors of  $(V_{\widehat{\h}}(1,0),\omega_{\Lambda})$ with $\Lambda=0$. In Section 5, we shall give two characterizations of Heisenberg vertex operator algebras in terms of  semi-conformal vectors and the corresponding subalgebras. In Section 6, we summarize 
the conclusion and give further discussion.

{\em Acknowledgement:} This work started when the first author was visiting Kansas State University from September 2013 to September 2014. He thanks the support by Kansas State University and its hospitality.  The first author also thanks China Scholarship Council for their financial supports. The second author thanks C. Jiang for many insightful discussions. This work was motivated from the joint work with her. The second author also thanks Henan University for the hospitality during his visit in the summer of 2015, during which this work was carried out. The authors want to thank the referees for carefully reading the paper and for providing constructive comments.  

\section{ Semi-conformal vectors and semi-conformal subalgebras of a vertex operator algebra}
\setcounter{equation}{0}

For basic notions and results associated with vertex operator algebras, one is
referred  to the books \cite{FLM,LL,FHL,BF}.  We will use $(V, Y, 1)$ to denote a vertex algebra and $(V, Y, 1, \omega)$ for a vertex operator algebra. When we deal with several different vertex algebras, we will use $ Y^V$, $1^{V}$, and $\omega^V$ to indicate the dependence of the vertex algebra or vertex operator algebra $V$.  For example $Y^V(\omega^V, z)=\sum_{n\in \Z} L^V(n)z^{-n-2}$.  To emphasize the presence of the conformal vector $\omega^V$, we will simply write $(V, \omega^V)$ for a vertex operator algebra and $V$ simply for a vertex algebra (with $Y^V$ and $1^V$ understood). We refer \cite{BF} for the concept of vertex algebras. Vertex algebras need not be graded, while a vertex operator algebra $(V,\omega)$ is always $ \Z$-graded by the $L^V(0)$-eigenspaces $V_n$ with integer eigenvalues $ n \in \Z$. We assume that each $ V_n$ is finite dimensional over $\C$ and $ V_n=0$ if $ n<<0$.   

    In this section, we shall first review semi-conformal vectors (subalgebras) of a vertex operator algebra (\cite{JL2}).

For two given vertex algebras $(V, Y^V , 1^V )$ and $(W, Y^W, 1^W),$ we recall from  \cite[Section 3.9]{LL} that
a homomorphism $f : V\rightarrow W$ of vertex algebras satisfies
$$f(Y^V (u, z)v) = Y^W(f(u),z)f(v),~~ \forall u, v \in V;~f(1^V ) = 1^W.$$
Let $V$ and $W$ be two vertex operator algebras with conformal vectors
$\omega^V$ and $\omega^W$, respectively. Then  $f$  is called {\em conformal} if $f(\omega^V ) = \omega^W$, which is equivalent to
$f \circ L^V (n) = L^W(n) \circ f, ~\mbox{for~all}~ n \in\Z.$ We say $f$  is {\em semi-conformal}
if $f \circ L^V (n) = L^W(n) \circ f,$ for all $n\geq 0$.

We remark that, for any vertex algebra homomorphism $f$ between two vertex operator algebras $(V, Y^V, 1^V, \omega^V)$ and $ (W, Y^W, 1^V, \omega^W)$, one always has $f \circ L^V (-1) = L^W(-1) \circ f$. Also $f$ is conformal if and only if $f \circ L^V (n) = L^W(n) \circ f,$ for all $n\geq -2$.  Thus a  semi-conformal  homomorphism $f$ is conformal if and only if $f \circ L^V (-2) = L^W(-2) \circ f $. 

Let $(V,\omega^V)$ be a vertex subalgebra of $(W,\omega^W)$ and
the map $f: V \rightarrow W$ is the inclusion, we say $V$ is a
conformal subalgebra of $W$ if $f$ is
conformal ($V$ has the same conformal vector with $W$).
If $f$ is semi-conformal, then $V$ is called a semi-conformal
subalgebra of $W$ and $\omega^V$ of $V$ is called
a semi-conformal vector of $W$.

For a vertex operator algebra $(W,\omega^W)$, we define
$$\begin{array}{lllll}
&\on{ScAlg}(W,\omega^W)=\{(V,\omega^V)\; |\; (V,\omega^V)~ \text{is~a~semi-conformal~subalgebra~of~}(W,\omega^W)\};\\
&\on{Sc}(W,\omega^W)=\{\omega'\in W\; |\;\omega'~\text{is~a~semi-conformal~vector~of}~(W,\omega^W)\};\\
&\overline{\on{S}(W,\omega^W)}=\{(V,\omega')\in \on{ScAlg}(W,\omega^W)|C_{W}(C_{W}(V))=V\}.
\end{array}
$$
Here $ C_{W}(V)$ is the centralizer of the vertex subalgebra $V$ in $W$ defined in \cite[Section 3.11]{LL}. It is also called commutant by others. 

It follows from the definition that there is a surjective map $ \on{ScAlg}(W, \omega^W)\rightarrow \on{Sc}(W,\omega^W)$ by $(V, \omega^V)\mapsto \omega^V$  since  the vertex subalgebra $ <\omega'>$ generated by $ \omega'$ and $1^W$ is automatically a semi-conformal subalgebra.  There is also a surjective map $ \on{ScAlg}(W, \omega^W)\rightarrow \overline{\on{S}(W,\omega^W)}$ defined by $(V, \omega^V)\mapsto (C_W(C_W(V), \omega^V)$.

\begin{prop}\label{p2.1}
The restriction of the map $\on{ScAlg}(W,\omega^W)\rightarrow \on{Sc}(W, \omega^W)$ to the set $\overline{\on{S}(W,\omega^W)}$  is a bijection.
\end{prop}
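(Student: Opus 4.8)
The plan is to show that each fiber of the surjection $\pi\colon\on{ScAlg}(W,\omega^W)\to\on{Sc}(W,\omega^W)$ meets the subset $\overline{\on{S}(W,\omega^W)}$ in exactly one point. Surjectivity and injectivity of the restricted map will use, respectively, the idempotency of the double–commutant operation and the maximality property of $C_W(C_W(<\omega'>))$ recalled above in the discussion of the fibers $\pi^{-1}(\omega')$.

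For surjectivity, fix $\omega'\in\on{Sc}(W,\omega^W)$ and set $U(\omega')=C_W(C_W(<\omega'>))$. Since $<\omega'>$ is automatically a semi-conformal subalgebra (as noted above), $U(\omega')$ is the maximal conformal extension inside $W$ of semi-conformal subalgebras with conformal vector $\omega'$, so $(U(\omega'),\omega')\in\on{ScAlg}(W,\omega^W)$ and $\pi(U(\omega'),\omega')=\omega'$. Applying the triple–commutant identity $C_W(C_W(C_W(A)))=C_W(A)$ with $A=C_W(<\omega'>)$ gives $C_W(C_W(U(\omega')))=U(\omega')$, so $(U(\omega'),\omega')\in\overline{\on{S}(W,\omega^W)}$. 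Hence the restricted map hits $\omega'$.

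For injectivity, suppose $(V_1,\omega')$ and $(V_2,\omega')$ both lie in $\overline{\on{S}(W,\omega^W)}$ and have the same conformal vector $\omega'$. For each $i$, from $<\omega'>\subseteq V_i$ and the inclusion–reversing property of $C_W$ we get $C_W(V_i)\subseteq C_W(<\omega'>)$, hence $U(\omega')=C_W(C_W(<\omega'>))\subseteq C_W(C_W(V_i))=V_i$, the last equality because $(V_i,\omega')\in\overline{\on{S}(W,\omega^W)}$. On the other hand $(V_i,\omega')$ is a semi-conformal subalgebra of $(W,\omega^W)$ with conformal vector $\omega'$, hence is contained in the maximal such subalgebra, namely $U(\omega')$. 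Therefore $V_1=U(\omega')=V_2$, and since the conformal vectors already agree the two data coincide. This gives the asserted bijection.

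The only ingredient that is not pure bookkeeping with the lattice identities for $C_W$ is the fact, quoted above, that $C_W(C_W(<\omega'>))$ is the unique maximal semi-conformal subalgebra with conformal vector $\omega'$; equivalently, that any semi-conformal subalgebra $(V,\omega^V)$ of $(W,\omega^W)$ satisfies $C_W(V)=C_W(<\omega^V>)$, so that its double commutant is determined by $\omega^V$ alone. This is where the structure theory of semi-conformal subalgebras from \cite{JL2} — in particular \cite[Lemma 5.1]{JL2} together with the analysis of $W$ under the Virasoro algebra generated by $\omega^V$ — enters, and I expect it to be the main point; once it is available, the argument above is routine.
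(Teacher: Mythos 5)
Your proof is correct and follows essentially the same route as the paper, whose entire proof is the one-line observation that $\omega'\mapsto C_W(C_W(\langle\omega'\rangle))$ is the inverse map; your argument simply spells out, via the triple-commutant identity and the maximality of $C_W(C_W(\langle\omega'\rangle))$ among semi-conformal subalgebras with conformal vector $\omega'$, why that map is a well-defined two-sided inverse. The ingredients you flag as the "main point" are exactly the facts the paper quotes from \cite[Corollary 3.11.14]{LL}, so nothing further is needed.
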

\begin{proof} The map $\omega'\mapsto C_{W}(C_W(<\omega'>))$ is the inverse map  $\on{Sc}(W, \omega^W)\rightarrow \overline{\on{S}(W,\omega^W)}$.
\end{proof}

In a special case that  $(W,\omega^W)$ is a $\N$-graded vertex operator algebra with $W=\coprod\limits_{n\in \N} W_n$ and $W_0=\C\mathbf{1}$, the condition for a  vertex operator  subalgebra to be semi-conformal is much simpler. In this case,  for any vertex subalgebra $V$ of $W$, if $(V,\omega^{V})$ is a vertex operator algebra for  some $\omega^V\in V$, then
$(V,\omega^{V})$ is a semi-conformal subalgebra of $W$ if and only if
$\omega^{V}\in W_2\bigcap \mbox{Ker} L^W(1)$(see \cite[Theorem 3.11.12]{LL}).
Moreover, if $(V,\omega^{V})$ is a semi-conformal subalgebra of $(W,\omega^{W})$,
then $(C_{W}(V),\omega^{C}=\omega^W-\omega^V)$ is also a semi-conformal
subalgebra of $(W,\omega^{W})$, that is, if $\omega^{V}\in \on{Sc}(W,\omega^W)$, then $\omega^C=\omega^{W}-\omega^{V}\in \on{Sc}(W,\omega^W)$. Let $(V,\omega^V), (U,\omega^U)$ be two semi-conformal subalgebras of $(W,\omega^W)$.
If ~$\omega^V=\omega^U$ and $V\subset U$, then we say $(U,\omega^{U})$ is a conformal extension of $(V,\omega^{V})$ in $(W,\omega^{W})$. Moreover, $(V,\omega^{V})$ has a unique maximal conformal extension
$(C_{W}(C_{W}(V)),\omega^{V})$ in $(W,\omega^{W})$ in the sense that
if $(V,\omega^V)\subset (U, \omega^V)$, then $(U, \omega^V)\subset  (C_{W}(C_{W}(V)),\omega^{V})$( see \cite[Corollary 3.11.14]{LL}).  A semi-conformal subalgebra $(U, \omega^U)$ of $W$ is called {\em conformally closed} if $ C_V(C_V(U))=U$ (see \cite{JL2}). So  the set $\overline{\on{S}(W,\omega^W)}$ consists of all conformally closed
semi-conformal subalgebras of $(W,\omega^W)$.




Let  $(W,\omega^W)$ be a general $\Z$-graded vertex operator algebra. For a vector $\omega'\in W$, we write  $Y(\omega', z)=L'(z)=\sum_{n\in \Z}L'(n)z^{-n-2}$. Then we can use the following equations to characterize the set of semi-conformal vectors of $(W,\omega^W)$.
\begin{prop}\label{p2.2}
A vector $\omega'\in W_2$ is a semi-conformal vector of $(W,\omega^W)$ if and only if
it satisfies
\begin{align}\label{2.1}\left\{\begin{array}{llll}
L'(0)\omega'=L^W(0)\omega'=2\omega';\\
L'(1)\omega'=L^W(1)\omega'=0;\\
L'(2)\omega'=L^W(2)\omega'=\frac{c'}{2}\mbf{1}, \quad \text{for some } c'\in \C;\\
L'(-1)\omega'=L^W(-1)\omega';\\
L'(n)\omega'=L^W(n)\omega'=0, n\geq 3.
\end{array}\right.
\end{align}

\end{prop}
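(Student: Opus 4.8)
The plan is to show that the two systems of equations in \eqref{2.1} precisely encode the condition that $\omega'$ generate a vertex operator subalgebra $\langle\omega'\rangle$ (in the sense of \cite[Section 3.11.6]{LL}, allowing a different conformal vector) for which the inclusion into $(W,\omega^W)$ is semi-conformal. The key observation is that a weight-two vector $\omega'$ is the conformal vector of \emph{some} vertex operator subalgebra of $W$ if and only if the operators $L'(n)$ satisfy the Virasoro relations on the subalgebra they generate, and by a standard argument this reduces to checking the relations on the generator $\omega'$ itself, i.e. to the $\omega^W$-free half of \eqref{2.1}: $L'(0)\omega'=2\omega'$, $L'(1)\omega'=0$, $L'(2)\omega'=\tfrac{c'}{2}\mathbf 1$, and $L'(n)\omega'=0$ for $n\ge 3$. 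The first step is therefore to recall from \cite{LL} (skew-symmetry, the commutator and associator formulas, and the fact that $L'(-1)=\mathcal D$ acts as the translation operator on any vertex subalgebra) why these four conditions on $\omega'$ force $\{L'(n)\}_{n\in\Z}$ to close into a copy of the Virasoro algebra acting on $\langle\omega'\rangle$ with central charge $c'$, so that $(\langle\omega'\rangle,\omega')$ is genuinely a vertex operator algebra. This half is essentially the Virasoro-relation bookkeeping; I would cite it rather than grind it out.

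The second step handles the semi-conformal condition itself. By the definition recalled in the text, $(\langle\omega'\rangle,\omega')$ is a semi-conformal subalgebra of $(W,\omega^W)$ exactly when $L'(n)|_{\langle\omega'\rangle}=L^W(n)|_{\langle\omega'\rangle}$ for all $n\ge 0$. Since $\langle\omega'\rangle$ is generated by $\omega'$ and $\mathbf 1$ under the vertex operations and $L^W(-1)=L'(-1)$ always intertwines the action with the vertex operator maps, an induction on the PBW-type spanning set of $\langle\omega'\rangle$ shows that the equality $L'(n)=L^W(n)$ on all of $\langle\omega'\rangle$ for $n\ge 0$ follows once it holds on the single vector $\omega'$; that is, once $L'(n)\omega'=L^W(n)\omega'$ for $n\ge 0$, which together with the already-established values of $L'(n)\omega'$ gives exactly the $\omega^W$-side equalities in \eqref{2.1}: $L^W(0)\omega'=2\omega'$, $L^W(1)\omega'=0$, $L^W(2)\omega'=\tfrac{c'}{2}\mathbf 1$, $L^W(n)\omega'=0$ for $n\ge 3$. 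The extra line $L'(-1)\omega'=L^W(-1)\omega'$ in \eqref{2.1} is automatic (both equal $\mathcal D\omega'$), so it is harmless to include it; I would note this explicitly so the stated system is an honest characterization and not merely a sufficient condition.

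For the converse direction one assumes \eqref{2.1} and runs the two steps in reverse: the $\omega'$-only relations build the vertex operator subalgebra $(\langle\omega'\rangle,\omega')$ as in step one, and the matching relations $L'(n)\omega'=L^W(n)\omega'$ for $n\ge 0$ propagate to all of $\langle\omega'\rangle$ by the generation argument of step two, yielding that the inclusion is semi-conformal. The main obstacle is the propagation argument in step two: one must verify carefully that, writing a general element of $\langle\omega'\rangle$ as an iterated product of $\omega'$'s, the derivation-type identities $[L^W(m),Y^W(v,z)]=\sum_{j\ge 0}\binom{m+1}{j}z^{m+1-j}Y^W(L^W(j)v,z)$ (and the identical identity for the $L'(m)$, which are the modes of $Y^W(\omega',z)$) allow one to peel off one factor at a time while keeping the nonnegative-mode actions of $L^W$ and $L'$ synchronized, using $L^W(-1)=L'(-1)$ to handle the $j=0$ term. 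This is standard but slightly delicate because the mode $L^W(-1)$ appearing on the right-hand side when $j=m+1$ must be controlled; fortunately it equals $L'(-1)$, which is precisely why the condition $L'(-1)\omega'=L^W(-1)\omega'$ is free. Once this lemma is in hand the proposition follows, and I would then record it, as the text does, as the computational tool for cutting out $\on{Sc}(W,\omega^W)$ by polynomial equations.
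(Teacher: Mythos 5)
Your proposal is correct and takes essentially the same route as the paper: both arguments identify $\langle\omega'\rangle$ as a Virasoro vertex operator algebra from the $L'$-only relations (quoting the relevant results of Lepowsky--Li), and then characterize semi-conformality by the matching relations $L'(n)\omega'=L^W(n)\omega'$ for $n\ge 0$, propagated from the single generator $\omega'$ to all of $\langle\omega'\rangle$. The paper compresses that propagation into one line; your commutator-formula induction (equivalently: $v=\omega^W-\omega'$ satisfies $v_m\omega'=0$ for all $m\ge 0$, so all modes of $v$ commute with all modes of $\omega'$ and annihilate $\mathbf{1}$, hence annihilate $\langle\omega'\rangle$) is the standard way to make it explicit.
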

\begin{proof} If $\omega'$ is a semi-conformal vector, then $\omega'$ generates a Virasoro vertex operator algbera $<\omega'>$ in $W$ in sense of \cite[Proposition 3.9.3]{LL}, which is also a semi-conformal
subalgebra of $W$. Note that
$\omega'$ is the conformal vector of $<\omega'>$. By \cite[Corollary 3.11.9 and Corollary 3.11.10 ]{LL}, we have
\begin{equation}\label{2.2}[L'(m),L'(n)]=(m-n)L'(m+n)+\frac{m^3-m}{12}\delta_{m+n,0}c' \end{equation}
 acting on $W$ for some complex number $c'$, that is, there are
$$\left\{\begin{array}{llll}
L'(0)\omega'=2\omega';\\
L'(1)\omega'=0;\\
L'(2)\omega'=\frac{c'}{2}  1;\\
L'(-1)\omega'=L(-1)\omega';\\
L'(n)\omega'=0, n\geq 3.
\end{array}\right.
$$
Since $\omega'$ is a semi-conformal vector of $W$, we have
\begin{equation*}\left\{\begin{array}{llll}
L'(0)\omega'=L^W(0)\omega'=2\omega';\\
L'(1)\omega'=L^W(1)\omega'=0;\\
L'(2)\omega'=L^W(2)\omega'=\frac{c'}{2} \mbf{1};\\
L'(-1)\omega'=L^W(-1)\omega';\\
L'(n)\omega'=L^W(n)\omega'=0, n\geq 3.
\end{array}\right.
\end{equation*}

Conversely, if the conditions \eqref{2.1} hold,
then  there are  $L'(n)=L^W(n)$ on  Virasoro vertex operator algebra  $<\omega'>$ for all $n\geq -1$. Hence $\omega'$
 is a semi-conformal vector of $(W,\omega^W)$.
\end{proof}
\begin{remark}
Let $(W,\omega^W)$ be a $\N$-graded vertex operator algebra with $W_0=\C \mathbf{1}$. According to
the results in \cite{M}, the condition $L'(-1)\omega'=L^W(-1)\omega'$ can be removed from   \eqref{2.1} when we check whether a vector $\omega'$ is a semi-conformal vector of $(W,\omega^W)$.
\end{remark}
\begin{remark}\label{r2.3}  We can express the conditions (\ref{2.1}) by
operator product expansion(OPE) of vertex operators (see \cite{BF,K}) as follows
\begin{equation}\label{2.3}
L'(z)L'(w)\sim \frac{2L'(w)}{(z-w)^2}+\frac{\partial L'(w)}{(z-w)}+\frac{\frac{c'}{2}}{(z-w)^4};
\end{equation}
\begin{equation}\label{2.4}
L^W(z)L'(w)\sim \frac{2L'(w)}{(z-w)^2}+\frac{\partial L'(w)}{(z-w)}+\frac{\frac{c'}{2}}{(z-w)^4}.
\end{equation}
Sometimes, it is more convenient to use the operator product expansion (OPE) of vertex operators for verifying the conditions~\eqref{2.1}.
\end{remark}

\begin{proofof}{\bf Proof of Theorem 1.1.}  We will use Proposition~\ref{p2.2}. Each of the set of equations in \eqref{2.1} has two equalities (except the third one), the first and the second equalities. We will call them the first set and the second set of equations. We will show that each  of the two sets of equalities defines a Zariski closed subset of $V_2$. 
 
For each $ n \in \Z$, $V_n$ is finite dimensional. Hence $\Hom_{\C}(V_n, V_m)$ 
is a finite dimensional as well. Thus  for each $  l \in \N$ and each $n \in \Z$, 
the map $\rho_{n, l}: V_2 \rightarrow \Hom_{\C}(V_n, V_{n+1-l})$, defined by 
$v\mapsto \rho_{n,l}(v)=v_{l}: V_{n}\rightarrow V_{n+1-l}$, is a linear map and thus an algebraic map. 
Also $ L^V(n)=\omega^V_{n+1}: V_2\rightarrow V_{2-n}$ is linear. Thus the kernel 
$ \ker(L^V(n): V_2\rightarrow V_{2-n})$ is a Zariski closed subset of $ V_2$ for each $ n$. This shows that the set of vectors $\omega'\in V_2 $ satisfying the second set of equalities in \eqref{2.1}  is a Zariski closed subset of $V_2$. In fact it is an affine subspace of $V_2$. Here we have  used the fact  that the intersection of possibly infinitely many closed subsets is still closed.  

If  $\dim V_2=d$, we take  a basis  $\{e_1, \cdots, e_d\}$ of $V_2$. 
Now for each $ \omega'=\sum_{i=1}^d x_i e_i \in V_2$, $ \omega'_l(\omega') 
\in V_{3-l}$ can be expressed as a linear combination of a fixed basis of $ V_{3-l}$ 
with coefficients being quadratic polynomials of $\{x_1, \cdots, x_d\}$. 
Similarly, $\omega^V_l(\omega) \in V_{3-l}$ can be expressed as a linear 
combination of the same fixed basis of $ V_{3-l}$ with coefficients being 
linear polynomials of $\{x_1, \cdots, x_d\}$. Thus the first set of equalities in 
\eqref{2.1} gives a set of equations for a Zariski closed subset in $V_2$.

Thus the intersection of the closed subsets of $V_2$ defined by the two sets of  equations in \eqref{2.1} is a Zariski closed subset of $V_2$ which is exactly the set of all semi-conformal vectors in $V_2$ by Proposition~\ref{p2.2}. \qed
\end{proofof}
Next, we  define a partial ordering on  $\on{Sc}(V,\omega^V)$.
\begin{lem}\label{l2.5}
Let $\omega^1,\omega^2$ be two semi-conformal vectors of $(W,\omega^W)$. If there exists a semi-conformal
subalgebra $(U,\omega^2)$ such that $\omega^1\in (U,\omega^2)$, then
\begin{equation}\label{2.5}
C_W(C_W(<\omega^1>))\subset C_W(C_W(<\omega^2>)).
\end{equation}
\end{lem}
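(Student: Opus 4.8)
The plan is to unpack the hypothesis and the conclusion in terms of commutants and conformal extensions, and then chase inclusions. First I would note that the hypothesis ``$\omega^1\in (U,\omega^2)$'' means $\omega^1$ is a weight-two element of the semi-conformal subalgebra $U$, and that on $U$ the operators $L^U(n)=L^W(n)|_U$ for $n\ge 0$. The key observation is that $\omega^1$ is itself a semi-conformal vector of $W$, and moreover the Virasoro subalgebra $<\omega^1>$ generated by $\omega^1$ sits inside $U$: indeed $<\omega^1>$ is generated as a vertex algebra by $\omega^1$ and $1^W$, both of which lie in $U$, so $<\omega^1>\subseteq U$.

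Next I would use the standard order-reversing behaviour of the double-commutant (closure) operator. Since $<\omega^1>\subseteq U$, taking commutants gives $C_W(U)\subseteq C_W(<\omega^1>)$, and taking commutants again gives $C_W(C_W(<\omega^1>))\subseteq C_W(C_W(U))$. So it suffices to show $C_W(C_W(U))\subseteq C_W(C_W(<\omega^2>))$. For this I would invoke the uniqueness of the maximal conformal extension recalled in the excerpt (\cite[Corollary 3.11.14]{LL}): since $(U,\omega^2)$ is a semi-conformal subalgebra with conformal vector $\omega^2$, it contains $<\omega^2>$ (again because $\omega^2,1^W\in U$), and its maximal conformal extension in $W$ is $C_W(C_W(U))=C_W(C_W(<\omega^2>))$ — the point being that $C_W(C_W(-))$ depends only on the commutant $C_W(-)$, and $C_W(U)=C_W(<\omega^2>)$ because $U$ and $<\omega^2>$ are both conformal sub-extensions of the same maximal one, hence have the same commutant. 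Combining the two inclusions yields \eqref{2.5}.

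The one subtle point — and the step I expect to be the main obstacle — is justifying $C_W(U)=C_W(<\omega^2>)$, i.e.\ that passing from $<\omega^2>$ up to the conformal extension $U$ does not shrink the commutant. This is precisely where one needs $(U,\omega^2)$ to be a \emph{conformal} extension of $<\omega^2>$ (same conformal vector) rather than merely a vertex-algebra overalgebra: the inclusion $C_W(U)\subseteq C_W(<\omega^2>)$ is automatic from $<\omega^2>\subseteq U$, while the reverse inclusion uses that $U\subseteq C_W(C_W(<\omega^2>))$ by the maximality statement, so anything commuting with $<\omega^2>$ lies in $C_W(<\omega^2>)=C_W(C_W(C_W(<\omega^2>)))\subseteq C_W(U)$, using the general identity $C_W(C_W(C_W(X)))=C_W(X)$. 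Once that identity and the maximal-conformal-extension fact are in hand, the proof is a short diagram chase; I would present it in that order: (1) $<\omega^1>\subseteq U$ and $<\omega^2>\subseteq U$; (2) $U\subseteq C_W(C_W(<\omega^2>))$ by maximality, hence $C_W(U)=C_W(<\omega^2>)$ and so $C_W(C_W(U))=C_W(C_W(<\omega^2>))$; (3) $<\omega^1>\subseteq U$ gives $C_W(C_W(<\omega^1>))\subseteq C_W(C_W(U))$; (4) chain the equalities and inclusions.
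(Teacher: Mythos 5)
Your proof is correct and follows essentially the same route as the paper: from $<\omega^1>\subseteq U$ one reverses inclusions twice under the commutant and then identifies $C_W(C_W(U))$ with $C_W(C_W(<\omega^2>))$. The paper's own proof is a two-line version that leaves the key equality $C_W(U)=C_W(<\omega^2>)$ implicit; your justification of that step via the maximal conformal extension $U\subseteq C_W(C_W(<\omega^2>))$ and the triple-commutant identity is exactly the right way to fill it in.
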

\begin{proof} Since $(<\omega^1>,\omega^1)\subset (U, \omega^2)$, then we have $C_{W}(<\omega^1>)\supset C_{W}(U)$
and $C_W(C_W(<\omega^1>))\subset C_W(C_W(<\omega^2>)).$
\end{proof}

\begin{prop}\label{p2.6} Let $(W,\omega^W)$ be a vertex operator algebra. If $\omega'\in \on{Sc}(W,\omega^W)$, then we have
$L^W(n)=L'(n)$ on $C_W(C_W(<\omega'>))$ for $n\geq -1$.
\end{prop}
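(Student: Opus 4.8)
The plan is to recover the stronger conclusion $L^W(n)=L'(n)$ on the double commutant $C_W(C_W(<\omega'>))$ from the weaker fact that these operators agree on $<\omega'>$ (which is essentially the definition of $\omega'$ being semi-conformal, cf. Proposition~\ref{p2.2}). First I would set $U=C_W(C_W(<\omega'>))$ and recall from the discussion following Proposition~\ref{p2.2} (via \cite[Corollary 3.11.14]{LL}) that $(U,\omega')$ is itself a semi-conformal subalgebra of $(W,\omega^W)$ — indeed the unique maximal conformal extension of $(<\omega'>,\omega')$ — so in particular $\omega'$ is a conformal vector for $U$ and $U$ is stable under all the $L'(n)$ and all the $L^W(n)$ with $n\geq 0$. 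The content to be proved is therefore that the two Virasoro actions, which coincide on $<\omega'>$, must in fact coincide on all of $U$; the essential extra input is that $U$ commutes with $C_W(<\omega'>)$.

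The key step is to analyze the difference operator $D(n):=L^W(n)-L'(n)$ on $U$. Since $\omega'$ is semi-conformal, $D(n)\omega'=0$ for all $n\geq -1$ (this is built into Proposition~\ref{p2.2}), and more generally $D(n)$ annihilates the subalgebra $<\omega'>$. I would next observe that $\omega^W-\omega'$ is the conformal vector of the commutant $C_W(<\omega'>)$ and that $L^W(n)=(\omega^W-\omega')_{n+1}+L'(n)$, so $D(n)=(\omega^W-\omega')_{n+1}$, i.e. $D(n)$ is precisely the $n$-th Virasoro mode coming from the commutant's conformal vector $\omega^C=\omega^W-\omega'$. Now the crucial commutation relation: because $\omega^C\in C_W(<\omega'>)$ and $U=C_W(C_W(<\omega'>))$ commutes with $C_W(<\omega'>)$, the operators $(\omega^C)_m$ for $m\geq 0$ act as zero on $U$. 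Concretely, for $u\in U$ one has $(\omega^C)_m u = 0$ for all $m\geq 0$ because $\omega^C_m u$ is computed from the vertex-algebra commutator $[Y(\omega^C,z),Y(u,w)]$, which vanishes for the nonsingular (and singular) terms by the mutual commutativity of $U$ and $C_W(<\omega'>)$ — more carefully, $u\in C_W(C_W(<\omega'>))$ means $a_m u=0$ for all $a\in C_W(<\omega'>)$ and $m\geq 0$, applied to $a=\omega^C$. Hence $D(n)u=(\omega^C)_{n+1}u=0$ for all $n\geq -1$, which is exactly the claim. I would double-check the index bookkeeping: $Y(\omega^C,z)=\sum_n L^C(n)z^{-n-2}$ so $L^C(n)=(\omega^C)_{n+1}$, and the commutant condition gives $(\omega^C)_{n+1}u=0$ precisely when $n+1\geq 0$, i.e. $n\geq -1$, matching the statement.

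The main obstacle I anticipate is justifying that $u\in C_W(C_W(<\omega'>))$ really forces $(\omega^C)_m u=0$ for $m\geq 0$ and not merely $m\geq 1$: the definition of the commutant $C_W(S)=\{v\in W\mid a_m v=0\text{ for all }a\in S,\ m\geq 0\}$ from \cite[Section 3.11]{LL} does give the range $m\geq 0$, but one must confirm that $\omega^C=\omega^W-\omega'$ genuinely lies in $C_W(<\omega'>)$ — this is the standard fact that the commutant's conformal vector is $\omega^W-\omega'$ when $(<\omega'>,\omega')$ is semi-conformal (used already in the paragraph after Proposition~\ref{p2.2}, citing \cite[Theorem 3.11.12]{LL}), together with the elementary computation that $\omega'_m\omega^C=0$ for $m\geq 0$ using the bracket relations $[L'(m),L^W(n)]=[L'(m),L'(n)]$ for $m\geq 0$ (equivalently, $L'(m)$ and $L^C(n)$ commute for $m\geq 0$, which is the semi-conformal hypothesis). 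Once this membership is in hand the rest is a one-line application of the definition of the double commutant, so the proof is short; the care is entirely in the index ranges and in invoking the right LL references.
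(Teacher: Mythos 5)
Your proposal is correct and follows essentially the same route as the paper's own proof: identify $L^W(n)-L'(n)$ with $(\omega^W-\omega')_{n+1}$, note via \cite[Theorem 3.11.12]{LL} that $\omega^W-\omega'$ is the conformal vector of $C_W(<\omega'>)$ and hence lies in it, and then apply the defining condition of the double commutant to conclude $(\omega^W-\omega')_{n+1}u=0$ for $n\geq -1$. The extra care you take over the index range and the membership $\omega^W-\omega'\in C_W(<\omega'>)$ is exactly the content the paper compresses into its one citation.
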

\begin{proof} By using the proof of \cite[Theorem 3.11.12 ]{LL}, we know $\omega^W-\omega'$ is the conformal vector of the commutant $C_{W}(<\omega'>)$.
Since there is
\begin{eqnarray*} C_W(C_W(<\omega'>))&=&\{u\in W\;|\; v_nu=0, \forall v\in C_{W}(<\omega'>), n\geq 0\}\\ &=&\bigcap_{n\geq 0}\bigcap_{v\in C_{W}(<\omega'>)}\mbox{Ker}_{W}(v_n),
\end{eqnarray*} we have $$C_W(C_W(<\omega'>))
\subset \bigcap_{n\geq -1} \mbox{Ker}_{W}(L^W(n)-L'(n))$$ by taking $ v=\omega^W-\omega'$ and using $L^W(n)-L'(n)=v_{n+1}$. 
Thus,
$L^W(n)=L'(n)$ on $C_W(C_W(<\omega'>))$ for $n\geq -1$.
\end{proof}
\begin{defn}\label{d2.7}
For $\omega^1,\omega^2\in \on{Sc}(W,\omega^W)$, if $C_W(C_W(<\omega^1>))\subset C_W(C_W(<\omega^2>))$, we define $\omega^1\preceq \omega^2$
in $\on{Sc}(W,\omega^W)$. Then $\preceq$ gives a partial ordering on  $\on{Sc}(W,\omega^W)$.\end{defn}



\begin{prop}\label{p2.8}
For
$\omega^1,\omega^2\in \on{Sc}(W,\omega^W)$, $\omega^1\preceq \omega^2$ if and only if
the following conditions hold
\begin{eqnarray}\label{2.6}
\left\{
\begin{array}{lllll}
L^2(0)\omega^1=2\omega^1;\\
L^2(1)\omega^1=0;\\
L^2(2)\omega^1=L^1(2)\omega^1;\\
L^2(-1)\omega^1=L^1(-1)\omega^1;\\
L^2(n)\omega^1=0, ~\mbox{for}~ n\geq 3.
\end{array}
\right.
\end{eqnarray}
\end{prop}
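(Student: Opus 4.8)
The plan is to obtain both implications from Proposition~\ref{p2.2} applied to $\omega^1$, using Proposition~\ref{p2.6} for the forward implication and Lemma~\ref{l2.5} for the converse. The guiding observation is that \eqref{2.6} is exactly what remains of the system \eqref{2.1} once we replace the ambient operators $L^W(n)$ by $L^2(n)$ and drop those equalities that already hold for free because $\omega^1$ is a semi-conformal vector of $(W,\omega^W)$.

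\emph{Forward implication.} Suppose $\omega^1\preceq\omega^2$, i.e. $C_W(C_W(<\omega^1>))\subseteq C_W(C_W(<\omega^2>))$ by Definition~\ref{d2.7}. Since $\omega^1\in <\omega^1>\subseteq C_W(C_W(<\omega^1>))$, we get $\omega^1\in C_W(C_W(<\omega^2>))$, and then Proposition~\ref{p2.6} applied to $\omega^2$ gives $L^2(n)\omega^1=L^W(n)\omega^1$ for all $n\geq -1$. Substituting the five values of $L^W(n)\omega^1$ provided by \eqref{2.1} (namely $2\omega^1$ for $n=0$, $0$ for $n=1$, $L^1(2)\omega^1$ for $n=2$, $L^1(-1)\omega^1$ for $n=-1$, and $0$ for $n\geq 3$) yields precisely the five lines of \eqref{2.6}. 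This direction is purely formal.

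\emph{Converse.} Assume \eqref{2.6}. Put $\omega^C:=\omega^W-\omega^2$ and $L^C(n):=L^W(n)-L^2(n)=(\omega^C)_{n+1}$ on $W$. Since $\omega^2\in\on{Sc}(W,\omega^W)$, the commutant $C_W(<\omega^2>)$ is a semi-conformal subalgebra with conformal vector $\omega^C$ (as in the proof of Proposition~\ref{p2.6}), so $\omega^C\in\on{Sc}(W,\omega^W)$ as well. The heart of the argument is the identity $L^C(n)\omega^1=0$ for all $n\geq -1$: for each such $n$ one has $L^C(n)\omega^1=L^W(n)\omega^1-L^2(n)\omega^1$, and comparing the value of $L^W(n)\omega^1$ coming from \eqref{2.1} with that of $L^2(n)\omega^1$ coming from \eqref{2.6} shows they agree (for instance $L^C(-1)\omega^1=L^W(-1)\omega^1-L^2(-1)\omega^1=L^1(-1)\omega^1-L^1(-1)\omega^1=0$ by \eqref{2.1} and the fourth line of \eqref{2.6}; the cases $n=0,1,2$ and $n\geq 3$ are identical in spirit). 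Hence $(\omega^C)_k\omega^1=L^C(k-1)\omega^1=0$ for every $k\geq 0$, which exhibits $\omega^1$ as an element of $C_W(<\omega^C>)$. Since $\omega^C\in\on{Sc}(W,\omega^W)$, the commutant $C_W(<\omega^C>)$ is a semi-conformal subalgebra of $(W,\omega^W)$ with conformal vector $\omega^W-\omega^C=\omega^2$, that is, a semi-conformal subalgebra $(U,\omega^2)$ containing $\omega^1$. Lemma~\ref{l2.5} then gives $C_W(C_W(<\omega^1>))\subseteq C_W(C_W(<\omega^2>))$, i.e. $\omega^1\preceq\omega^2$ by Definition~\ref{d2.7}.

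The only non-bookkeeping ingredient is the identification $C_W(<\omega^C>)=\{v\in W\mid (\omega^C)_k v=0 \text{ for all } k\geq 0\}$, i.e. that the commutant of the vertex subalgebra generated by a single vector is already detected on that vector; this is standard (see \cite[Section 3.11]{LL}), and the same reference, together with the proof of Proposition~\ref{p2.6}, supplies the statements about commutants of semi-conformal subalgebras in the general $\Z$-graded setting. I expect the only real care to be required in organizing the $n$-by-$n$ comparison in the key identity and in checking that each commutant fact is invoked in the correct generality; I do not anticipate any genuine obstacle beyond this.
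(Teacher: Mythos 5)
Your proof is correct and follows essentially the same route as the paper: the direction ``$\omega^1\preceq\omega^2\Rightarrow$ \eqref{2.6}'' via Proposition~\ref{p2.6} combined with \eqref{2.1}, and the converse by showing $\omega^1$ lies in $C_W(C_W(<\omega^2>))$ and invoking Lemma~\ref{l2.5}. The only (harmless) difference is that the paper gets the converse from the single identity $C_W(C_W(<\omega^2>))=\mathrm{Ker}_W(L^W(-1)-L^2(-1))$, using only the fourth line of \eqref{2.6}, whereas you verify $(\omega^W-\omega^2)_k\omega^1=0$ for all $k\geq 0$ directly.
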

\begin{proof}
Since $L(1)\omega^2=0$, we have
$$C_{W}(C_{W}(<\omega^2>))=\mbox{Ker}_{W}(L^W(-1)-L^2(-1)).$$
If the conditions (\ref{2.6}) are satisfied, then there is
$$L^W(-1)\omega^1=L^2(-1)\omega^1.$$
So $\omega^1\in \mbox{Ker}_{W}(L^W(-1)-L^2(-1))=C_{W}(C_{W}(<\omega^2>))$. By Proposition \ref{p2.2},
$\omega^1$ is a semi-conformal vector of $C_{W}(C_{W}(<\omega^2>))$,
that is, $\omega^1\preceq \omega^2$.

Conversely,  for $\omega^1,\omega^2\in \on{Sc}(W,\omega^W)$,  if $\omega^1\preceq \omega^2$, then we can verify
the relations (\ref{2.6}) by the proof of Proposition \ref{p2.2}.
\end{proof}

Next, we consider the commutant of
a vertex operator algebra as an operator on $\on{ScAlg}(W,\omega^W)$
as follows
 $$
 \begin{array}{lllll}
 C_W:\on{ScAlg}(W,\omega^W)\longrightarrow \on{ScAlg}(W,\omega^W)\\
 \hspace{2.4cm}(U,\omega')\longmapsto C_{W}(U).
 \end{array}
 $$
 Then the operator $C_W$ induces an involution $\omega_{-}^W$ of $\on{Sc}(W,\omega^W)$ as follows
 $$
 \begin{array}{lllll}
 \omega_{-}^W:\on{Sc}(W,\omega^W)\longrightarrow \on{Sc}(W,\omega^W)\\
 \hspace{2.8cm}\omega'\longmapsto \omega^W-\omega'.
 \end{array}
 $$
 In fact, associated with the surjection $\pi$
$$\begin{array}{llll}
\pi:\on{ScAlg}(W,\omega^W)\longrightarrow \on{Sc}(W,\omega^W)\\
\hspace{2.0cm}(U,\omega')\longmapsto \omega',
\end{array}
$$
we have the following commutative diagram
 $$\begin{CD}
 \on{ScAlg}(W,\omega^W)@>C_W>> \on{ScAlg}(W,\omega^W)\\
 @VV\pi V      @VV\pi V\\
 \on{Sc}(W,\omega^W)@>\omega_{-}^W>> \on{Sc}(W,\omega^W).
 \end{CD}
 $$
\begin{remark}
For the surjection $\pi:\on{ScAlg}(W,\omega^W)\longrightarrow \on{Sc}(W,\omega^W)$,
the fiber $\pi^{-1}(\omega')$ for each $\omega'\in \on{Sc}(W,\omega^W)$ is the set of semi-conformal subalgebras with the conformal vector $\omega'$ in $W$. In the fiber $\pi^{-1}(\omega')$, there exists a unique maximal one $C_{W}(C_{W}(<\omega'>))$( see \cite[Corollary 3.11.14]{LL}).
We can regard $\pi^{-1}(\omega')$ as the set of conformal subalgebras of $C_{W}(C_{W}(<\omega'>))$.
\end{remark}
Let $\mbox{Min}\on{Sc}(W,\omega^W)$ be the subset of $\on{Sc}(W,\omega^W)$ which consists of all non-zero minimal semi-conformal vectors of $(W,\omega^W)$ under the partial order $\preceq$.
For  $\omega^1,\omega^2\in \on{Sc}(W,\omega^W)$, we know that
$\omega^W-\omega^1$ and $\omega^W-\omega^2$ are  conformal vectors of commutants $C_{W}(<\omega^1>)$ and  $C_{W}(<\omega^2>)$, respectively. If $\omega^1\preceq \omega^2$, then $\omega^W-\omega^2\preceq \omega^W-\omega^1$. Hence we have the maximal subset of  $\on{Sc}(W,\omega^W)$ which consists of all non-trivial maximal semi-conformal vectors of $(W,\omega^W)$ under the partial order $\preceq$. Let $\mbox{Max}\on{Sc}(W,\omega^W)$ be the maximal subset of  $\on{Sc}(W,\omega^W)$. Then there are
\begin{equation}
\label{2.7}\mbox{Max}\on{Sc}(W,\omega^W)=\{\omega^W-\omega^1|\forall \omega^1\in \mbox{Min}\on{Sc}(W,\omega^W)\};\end{equation}
and
\begin{equation}
\label{2.8}
 \mbox{Min}\on{Sc}(W,\omega^W)=\{\omega^W-\omega^1|\forall \omega^1\in \mbox{Max}\on{Sc}(W,\omega^W)\}.
 \end{equation}

\section{ Semi-conformal vectors of the Heisenberg vertex operator algebra $(V_{\widehat{\h}}(1,0),\omega)$ }
\setcounter{equation}{0}
\subsection{} \label{sec3.1}
In the following, we  recall  some results of Heisenberg vertex operator algebras and refer to \cite{BF, LL} for
more details.

Let $\h$ be a $d$-dimensional vector space with a nondegenerate symmetric bilinear form $\langle\cdot,\cdot\rangle$.
$\hat{\h}=\C[t,t^{-1}]\otimes\h\oplus\C C$ is the affiniziation of
the abelian Lie algebra $\h$ defined by
\begin{align*}
[\beta_1\otimes t^{m},\,\beta_2\otimes
t^{n}]=m\langle\beta_1,\beta_2\rangle\delta_{m,-n}C\hbox{ and }[C,\hat{\h}]=0
\end{align*}
for  any $\beta_i\in\h (i=1,2),\,m,\,n\in\Z$. Then $\hat{\h}^{\geq
0}=\C[t]\otimes\h\oplus\C C$ is an Abelian subalgebra. For
$\forall \lambda\in\h$, we can define an one-dimensional $\hat{\h}^{\geq
0}$-module $\C e^\lambda$ by the actions $(h\otimes
t^{m})\cdot e^\lambda=(\lambda,h)\delta_{m,0}e^\lambda$ and
$C\cdot e^\lambda=e^\lambda$ for $h\in\h$ and $m\geq0$.
Set
\begin{align*}
V_{\widehat{\h}}(1,{\lambda})=U(\hat{\h})\otimes_{U(\hat{\h}^{\geq 0})}\C
e^\lambda\cong S(t^{-1}\C[t^{-1}]\otimes \h),
\end{align*}
which is an $\hat{\h}$-module induced from $\hat{\h}^{\geq 0}$-module $\C
e^\lambda$. When $ \lambda=0$,  let $\1=1\otimes e^0 \in V_{\widehat{\h}}(1,0)$. By the strong reconstruction theorem \cite[Thm. 4.4.1]{BF}, there is a unique vertex algebra structure 
$Y:V_{\widehat{\h}}(1,0)\to(\End (V_{\widehat{\h}}(1,{0})))[[z,z^{-1}]]$ on $V_{\widehat{\h}}(1,0)$. For each choice of  an orthonormal basis $\{h_1,\cdots, h_d\}$   of $\h$ and $\Lambda=(\Lambda_1,\cdots,\Lambda_d)\in \C^d$, define
$\omega_{\Lambda}=\frac{1}{2}\sum_{i=1}^{d}
h_i(-1)^2\cdot\mathbf{1}+\sum_{i=1}^{d}\Lambda_ih_i(-2)\cdot\1 \in V_{\widehat{\h}}(1,0)$.  Then $(V_{\widehat{\h}}(1,0),\,Y,\,\1,\,\w_{\Lambda})$  has a  vertex operator algebra
structure  and $(V_{\widehat{\h}}(1,{\lambda}),Y)$ becomes an irreducible module of 
$(V_{\widehat{\h}}(1,0),\omega_{\Lambda})$ with $\Lambda=0$ for any $\lambda\in\h$ (see \cite{FLM,LL}).

For different choices of $\Lambda\in \C^d$, the resulted vertex operator algebras are not isomorphic in general since the central charges can be different (see \cite[Examples 2.5.9]{BF})  although the underlying vertex algebra 
$(V_{\widehat{\h}}(1,0),\,Y,\,\1)$ is unique.  When $ \Lambda=0$, the vertex operator algebra$(V_{\widehat{\h}}(1,0),\,Y,\,\1,\,\w)$ is said to be the standard
Heisenberg operator algebra. It is easy to compute the automorphism group  
$\on{Aut}(V_{\widehat{\h}}(1,0),\,Y,\,\1,\,\w)=\on{O}(\h)$. This is not true 
when $ \Lambda\neq 0$.  Note that the computations in \cite[Section 2.5.9]{BF} shows 
that $L^{\omega_{\Lambda}}(0)=(\omega_{\Lambda})_1$ is always the degree operator. Hence the gradation on $V_{\widehat{\h}}(1,0)$ are all the same for all different $\Lambda$ and $ V_{\widehat{\h}}(1,0)_{n}$ is independent of the choice of $\Lambda$. 

\subsection{}
For the Heisenberg vertex operator algebra  $(V_{\widehat{\h}}(1,0),\omega_\Lambda)$,  $V_{\widehat{\h}}(1,0)_2$  has a basis
\begin{equation}
\label{f3.1}\{h_{i}(-1)h_{j}(-1)\cdot\mathbf{1}; h_{k}(-2)\cdot\mathbf{1}|1 \leq i\leq j\leq d, k=1,\cdots, d\}.\end{equation}
Let \begin{equation}\label{f3.2}\omega'=\sum_{1 \leq i\leq j\leq d}a_{ij}h_{i}(-1)h_{j}(-1)\cdot\mathbf{1}+\sum_{i=1}^{d}b_ih_{i}(-2)\cdot\mathbf{1}\in V_{\widehat{\h}}(1,0)_2.\end{equation} Then there exists a unique symmetric matrix
\begin{align}\label{f3.3}
A_{\omega'}=
\left(\begin{array}{cccccc}
&2a_{11} &a_{12} &\cdots &a_{1d}\\
&a_{12}& 2a_{22}&\cdots& a_{2d}\\
&\cdots&\cdots&\cdots&\cdots\\
&a_{1d}& \cdots &a_{d-1d} &2a_{dd}
\end{array}\right)
\end{align}
and a vector
$B_{\omega'}=(b_1,\cdots,b_d)^{tr}$ with entries in $\C$ such that
\begin{eqnarray}
\begin{array}{llll}
\omega'&=\frac{1}{2}(h_1(-1),\cdots, h_d(-1))A_{\omega'}
\left(\begin{array}{c}
h_1(-1)\\
~~~~~~\vdots\\
 h_d(-1)
\end{array}\right)\cdot\mathbf{1}+(h_1(-2),\cdots, h_d(-2)
)B_{\omega'}\cdot\1,
\end{array}
\end{eqnarray}
where the symbol $C^{tr}$ is the transpose of  the  matrix $C$.

\begin{prop}\label{p3.1}
$\omega'\in \on{Sc}(V_{\widehat{\h}}(1,0),\omega_{\Lambda})$ if and only if $A_{\omega'},B_{\omega'}$ satisfy
$$A_{\omega'}^{tr}=A_{\omega'},\; A_{\omega'}^2=A_{\omega'},\; A_{\omega'}\Lambda^{tr}=B_{\omega'}^{tr}.
$$ 
\end{prop}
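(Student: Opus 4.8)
The plan is to translate the defining equations (\ref{2.1}) of a semi-conformal vector, via Proposition~\ref{p2.2}, into matrix identities for the pair $(A_{\omega'}, B_{\omega'})$. First I would compute the relevant modes $L'(n) = \omega'_{n+1}$ acting on $\omega'$ itself, using the explicit form $\omega' = \frac12 \sum_{i,j} (A_{\omega'})_{ij} h_i(-1)h_j(-1)\cdot\1 + \sum_i (B_{\omega'})_i h_i(-2)\cdot\1$ together with the standard commutation relations $[h_i(m), h_j(n)] = m\delta_{i,j}\delta_{m+n,0}$ (after choosing the orthonormal basis so $\langle h_i, h_j\rangle = \delta_{i,j}$). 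Symmetry $A_{\omega'}^{tr} = A_{\omega'}$ is automatic from the construction in (\ref{f3.3}), so the real content is in the remaining modes. Since $\omega_\Lambda$ always defines the degree operator, the conditions $L'(0)\omega' = 2\omega'$, $L^W(0)\omega' = 2\omega'$, and the vanishing $L'(n)\omega' = L^W(n)\omega' = 0$ for $n\geq 3$ are automatic on $V_{\widehat{\h}}(1,0)_2$ and contribute nothing.

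The key computations are then: (i) $L'(1)\omega' = 0$ and $L^W(1)\omega' = 0$; (ii) $L'(2)\omega' \in \C\1$ (the central charge condition, which for $V_{\widehat{\h}}$ should be $c' = \tr A_{\omega'} - $ a correction from $B_{\omega'}$, and imposes no constraint beyond being a scalar, which is automatic since $V_{\widehat{\h}}(1,0)_0 = \C\1$); and most importantly (iii) $L'(-1)\omega' = L^W(-1)\omega'$. I expect that computing $L'(1)\omega'$ — which lands in $V_{\widehat{\h}}(1,0)_1$, spanned by the $h_k(-1)\cdot\1$ — produces precisely the vector whose coefficients are the entries of $(A_{\omega'}^2 - A_{\omega'})\cdot(\text{something})$; more carefully, the quadratic part of $\omega'$ contracted against itself under the mode $\omega'_2$ yields a degree-one vector whose coefficient vector is governed by $A_{\omega'}^2 - A_{\omega'}$ acting appropriately, and the cross term between the quadratic part and the linear ($B_{\omega'}$) part, together with $L^W(1)\omega'$, produces the relation $A_{\omega'}\Lambda^{tr} = B_{\omega'}^{tr}$. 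The condition $L'(-1)\omega' = L^W(-1)\omega'$ should, after these, be either automatic or give the same constraints (consistent with the Remark that it can be dropped in the $\N$-graded, $V_0 = \C\1$ case, which applies here).

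Concretely, I would organize the proof as follows. Step 1: reduce via Proposition~\ref{p2.2} to verifying the two-equality system on $\omega' \in V_{\widehat{\h}}(1,0)_2$, and note which equations are vacuous by the degree-operator observation. Step 2: fix the orthonormal basis and write out $Y(\omega', z) = L'(z)$; extract $L'(1)$ and $L'(2)$ as explicit bilinear expressions in the $h_i(n)$. Step 3: apply $L'(1)$ to $\omega'$. The purely quadratic-against-quadratic contribution gives a vector in $V_1$ proportional to the columns of $A_{\omega'}^2 - A_{\omega'}$ (this is where the idempotency emerges); the quadratic-against-linear contribution gives $A_{\omega'}B_{\omega'}$; set the total equal to $L^W(1)\omega'$, which is the analogous expression with $A_{\omega'}$ replaced by the identity and $B_{\omega'}$ by $\Lambda$, yielding $A_{\omega'}^2 = A_{\omega'}$ and $A_{\omega'}B_{\omega'} + (\text{terms}) $, and ultimately $A_{\omega'}\Lambda^{tr} = B_{\omega'}^{tr}$ after using $A_{\omega'}^2 = A_{\omega'}$. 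Step 4: check $L'(2)\omega' = L^W(2)\omega' \in \C\1$ is automatic, and check $L'(-1)\omega' = L^W(-1)\omega'$ is a consequence (or cite the Remark). Step 5: conversely, if the three matrix identities hold, run the computations backwards to confirm all equations in (\ref{2.1}), hence $\omega' \in \on{Sc}(V_{\widehat{\h}}(1,0),\omega_\Lambda)$.

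The main obstacle I anticipate is purely computational bookkeeping: correctly handling the symmetrization conventions hidden in (\ref{f3.3}) (the diagonal entries are $2a_{ii}$ while off-diagonal are $a_{ij}$), and carefully tracking which mode of a normally-ordered product $:h_i h_j:$ contributes when acting on another such product, including the double-contraction terms that produce the scalar $c'$ and the single-contraction terms that produce the $V_1$-valued output. Getting the combinatorial factors exactly right so that $A_{\omega'}^2 - A_{\omega'}$ appears cleanly (rather than, say, $\frac14 A_{\omega'}^2 - \frac12 A_{\omega'}$ or with the wrong transpose) is the delicate point; everything else is formal. I would double-check the factor of $\frac12$ in front of the quadratic part against the known case $\omega' = \omega_\Lambda$ (where $A_{\omega'} = I$, $B_{\omega'} = \Lambda$) to calibrate.
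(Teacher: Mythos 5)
Your proposal follows essentially the same route as the paper: translate the conditions of Proposition~\ref{p2.2} (via the OPE form in Remark~\ref{r2.3}) into explicit polynomial equations on the entries of $A_{\omega'}$ and $B_{\omega'}$, recognize these as matrix identities, and use the built-in symmetry of $A_{\omega'}$ together with $A_{\omega'}^2=A_{\omega'}$ and $A_{\omega'}\Lambda^{tr}=B_{\omega'}$ to show the remaining identities are redundant. One small correction: the condition $L'(2)\omega'=L^W(2)\omega'$ is \emph{not} vacuous just because both sides lie in $\C\mbf{1}$ --- it is a genuine scalar equation, amounting to $\tfrac12\on{tr}A_{\omega'}-6\Lambda B_{\omega'}=\tfrac12\on{tr}A_{\omega'}^2-6B_{\omega'}^{tr}B_{\omega'}$, which the paper records as $\Lambda B_{\omega'}=B_{\omega'}^{tr}B_{\omega'}$ (together with $\on{tr}A_{\omega'}^2=\on{tr}A_{\omega'}$) and then proves is a consequence of the three stated relations; your plan still succeeds, but you must carry this equation along rather than discard it at the outset.
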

\begin{proof} By Proposition \ref{p2.2} and Remark \ref{r2.3}, we can compute that
$\omega'\in \on{Sc}(V_{\widehat{\h}}(1,0),\omega_{\Lambda})$ if and only if $A_{\omega'},B_{\omega'}$ satisfy
\begin{eqnarray}
\label{3.3}
\left\{
\begin{array}{llllllll}
4a_{11}^2+a_{12}^2+\cdots+a_{1d}^2=2a_{11};\\
\ \cdots\ \ \ \ \ \cdots\ \ \ \ \ \ \cdots\\
a_{1d}^2+\cdots+a_{d-1d}^2+4a_{dd}^2=2a_{dd};\\
2a_{11}b_1+a_{12}b_2+\cdots+a_{1d}b_d=b_1;\\
a_{12}b_1+a_{22}b_2+\cdots+a_{2d}b_d=b_2;\\
\cdots\ \ \ \ \ \cdots\ \ \ \ \ \cdots\ \ \ \ \ \cdots\\
a_{1d}b_1+\cdots+a_{d-1d}b_{r-1}+2a_{dd}b_d=b_d;\\
2\Lambda_1 a_{11}+\Lambda_2a_{12}+\cdots+\Lambda_d a_{1d}=b_1;\\
\Lambda_1a_{12}+2\Lambda_2a_{22}+\cdots+\Lambda_da_{2d}=b_2;\\
\cdots\ \ \ \ \ \cdots\ \ \ \ \ \cdots\ \ \ \ \ \cdots\\
\Lambda_1a_{1d}+\cdots+\Lambda_{d-1}a_{d-1d}+2\Lambda_da_{dd}=b_d;\\
\sum\limits_{i=1}^{d}a_{ii}-6\sum\limits_{i=1}^{d}\Lambda_ib_i=\sum\limits_{1\leq i<j\leq d}a_{ij}^2+2\sum\limits_{i=1}^{d}a_{ii}^2-6\sum\limits_{i=1}^{d}b_i^2;\\
a_{i1}a_{1j}+\cdots+a_{ii-1}a_{i-1j}+2a_{ii}a_{ij}+a_{ii+1}a_{i+1j}+\cdots+a_{ij-1}a_{j-1j}\\
+2a_{ij}a_{jj}+a_{ij+1}a_{j+1j}+\cdots+a_{id}a_{dj}=a_{ij},\mbox{for}~ 1\leq i< j\leq d.\end{array}
\right.
\end{eqnarray}
 In fact, the relations \eqref{3.3} can be rewritten as follows:
\begin{equation}\label{3.7}
 A_{\omega'}^2=A_{\omega'},\; A_{\omega'}B_{\omega'}=B_{\omega'}, \; A_{\omega'}\Lambda^{tr}=B_{\omega'},\; \Lambda B_{\omega'}=B_{\omega'}^{tr}B_{\omega'}.
\end{equation}
Assuming $A_{\omega'}=A_{\omega'}^{tr},\; A_{\omega'}^2=A_{\omega'},\; A_{\omega'}\Lambda^{tr}=B_{\omega'}$, then there are
$$A_{\omega'}B_{\omega'}=A_{\omega'}(A_{\omega'}\Lambda^{tr})=A_{\omega'}^2\Lambda^{tr}=A_{\omega'}\Lambda^{tr}=B_{\omega'};$$
$$\Lambda B_{\omega'}=\Lambda A_{\omega'}\Lambda^{tr}=\Lambda A_{\omega'}^2\Lambda^{tr}=(A_{\omega'}\Lambda^{tr})^{tr}A_{\omega'}\Lambda^{tr}
=B^{tr}_{\omega'}B_{\omega'}.$$
Therefore, conditions in \eqref{3.7}  under the assumption that $A$ is symmetric is equivalent to the following three relations:
$$A_{\omega'}^{tr}=A_{\omega'},\; A_{\omega'}^2=A_{\omega'},\; A_{\omega'}\Lambda^{tr}=B_{\omega'}.
$$
\end{proof}

Let $ G_{\Lambda}=\on{Aut}(V_{\widehat{\h}}(1,0),\omega_\Lambda)$ be the automorphism group of the vertex operator algebra $(V_{\widehat{\h}}(1,0),\omega_\Lambda)$. Then $G_{\Lambda}$ is a subgroup of $\on{O}(\h)$. $G_{\Lambda}$ acts on $ V_{\widehat{\h}}(1,0)_n$ for all $n$. In particular, $G_{\Lambda}$ acts on the algebraic variety $ \on{Sc}(V_{\widehat{\h}}(1,0),\omega_\Lambda)$. One of the questions is to determine the $G_\Lambda$-orbits in $\on{Sc}(V_{\widehat{\h}}(1,0),\omega_\Lambda)$. In this paper, we will concentrate on the case $\Lambda=0$ and the general cases will be pursued in a later work. 

 When $\Lambda=0$, we denote $\omega_{\Lambda}$ by $\omega$. Thus the conformal automorphism group $G=\on{Aut}(V_{\widehat{\h}}(1,0), \omega)=\on{O}(\h)$, which is  the orthogonal group with respect to the fixed non-degenerate symmetric bilinear form on $\h$.    With respect to a fixed orthonormal basis $\{ h_1, \cdots, h_d\}$ of $\h$, the symmetric matrix $A_{\omega'}$ defines  a self adjoint  (with respect to the symmetric bilinear form on $\h$)  linear transformation $\mathcal{A}_{\omega'}$ of $\h$. By Proposition 3.1, we get that $\omega'\in \on{Sc}(V_{\widehat{\h}}(1,0),\omega)$ is one to one correspondence to a self adjoint  idempotent linear transformation $\mathcal{A}_{\omega'}$ of $\h$. Thus, the set $\on{Sc}(V_{\widehat{\h}}(1,0),\omega)$ can be described as the set of  self adjoint idempotent linear transformations of $\h$. Let $$\on{SymId}(\h)=\{\mathcal{A}\in \on{End}(\h)\;|\;
\mathcal{A}^2=\mathcal{A}\; \text{~and~}\;  (\mathcal{A}(u),v)=(u,\mathcal{A}(v)),  \forall u,v\in \h\}.$$
 Then we have
\begin{prop}\label{p3.2}
The map $\omega'\mapsto  \mathcal{A}_{\omega'} $ is a bijection from $\on{Sc}(V_{\widehat{\h}}(1,0),\omega)$  to $\on{SymId}(\h)$.
\end{prop}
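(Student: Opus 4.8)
The plan is to deduce this from Proposition~\ref{p3.1} by specializing to $\Lambda=0$ and then translating the resulting matrix conditions into the language of linear transformations of $\h$. First I would record that, by the construction preceding Proposition~\ref{p3.1}, the assignment $\omega'\mapsto (A_{\omega'},B_{\omega'})$ is a $\C$-linear isomorphism from $V_{\widehat{\h}}(1,0)_2$ onto the direct sum of the space of symmetric $d\times d$ complex matrices and $\C^d$: reading off coordinates in the basis \eqref{f3.1} one has $a_{ii}=(A_{\omega'})_{ii}/2$, $a_{ij}=(A_{\omega'})_{ij}$ for $i<j$, and $b_i=(B_{\omega'})_i$, and these formulas visibly invert one another. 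In particular $\omega'\mapsto (A_{\omega'},B_{\omega'})$ is injective on all of $V_{\widehat{\h}}(1,0)_2$.

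Next I would specialize Proposition~\ref{p3.1} to $\Lambda=0$, so that $\omega=\omega_0$. The three conditions there become $A_{\omega'}^{tr}=A_{\omega'}$, $A_{\omega'}^2=A_{\omega'}$, and $B_{\omega'}=0$ (the condition $A_{\omega'}\Lambda^{tr}=B_{\omega'}^{tr}$ forces $B_{\omega'}=0$ when $\Lambda=0$). Hence $\omega'\in\on{Sc}(V_{\widehat{\h}}(1,0),\omega)$ if and only if $B_{\omega'}=0$ and $A_{\omega'}$ is a symmetric idempotent matrix. Composing the coordinate isomorphism with projection onto the first component therefore restricts to a bijection from $\on{Sc}(V_{\widehat{\h}}(1,0),\omega)$ onto the set of symmetric idempotent $d\times d$ matrices: injectivity is inherited from the injectivity of $\omega'\mapsto(A_{\omega'},B_{\omega'})$ together with the forced vanishing of $B_{\omega'}$, and surjectivity follows by associating to a symmetric idempotent matrix $A$ the vector $\omega'=\frac{1}{2}\sum_{i}A_{ii}h_i(-1)^2\cdot\mathbf{1}+\sum_{i<j}A_{ij}h_i(-1)h_j(-1)\cdot\mathbf{1}$, which satisfies $A_{\omega'}=A$, $B_{\omega'}=0$, and hence lies in $\on{Sc}(V_{\widehat{\h}}(1,0),\omega)$ by the specialized Proposition~\ref{p3.1}.

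Finally I would identify the target set. Since $\{h_1,\dots,h_d\}$ is an orthonormal basis of $(\h,\langle\cdot,\cdot\rangle)$, for $u,v\in\h$ with coordinate column vectors $\mathbf{u},\mathbf{v}$ one has $\langle\mathcal{A}_{\omega'}(u),v\rangle=(A_{\omega'}\mathbf{u})^{tr}\mathbf{v}$ and $\langle u,\mathcal{A}_{\omega'}(v)\rangle=\mathbf{u}^{tr}(A_{\omega'}\mathbf{v})$; these agree for all $u,v$ precisely when $A_{\omega'}^{tr}=A_{\omega'}$, and $\mathcal{A}_{\omega'}^2=\mathcal{A}_{\omega'}$ precisely when $A_{\omega'}^2=A_{\omega'}$. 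Thus the passage from a matrix to the linear transformation it represents in the chosen orthonormal basis is a bijection from symmetric idempotent matrices onto $\on{SymId}(\h)$, and composing it with the bijection of the previous paragraph gives the asserted bijection $\omega'\mapsto\mathcal{A}_{\omega'}$.

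I do not anticipate a serious obstacle: once Proposition~\ref{p3.1} is available the statement is essentially a change of coordinates. The only points needing care are the factor of $2$ on the diagonal of $A_{\omega'}$ (so the inverse construction of $\omega'$ from $A$ uses $\frac{1}{2}A_{ii}$) and the observation that orthonormality of the basis makes ``self-adjoint'' synonymous with ``symmetric matrix,'' which is exactly what permits replacing the set of symmetric idempotent matrices by the intrinsically defined set $\on{SymId}(\h)$.
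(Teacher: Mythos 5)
Your proposal is correct and follows essentially the same route as the paper, which derives Proposition~\ref{p3.2} directly from Proposition~\ref{p3.1} by setting $\Lambda=0$ (forcing $B_{\omega'}=0$) and identifying symmetric idempotent matrices with self-adjoint idempotent operators via the fixed orthonormal basis. The coordinate bookkeeping, including the factor of $2$ on the diagonal of $A_{\omega'}$, is handled correctly.
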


If the restriction $(\cdot,\cdot)|_{\h'}$ on $\h$ to $\h'$ is still nondegenerate,  we say $\h'$ is {\em a regular subspace} of $\h$. Let 
$\on{Reg}(\h)=\{\h'\subset \h\;|\;\h'\; \text{ is a regular subspace of  }\; \h\}$. Then we have
\begin{cor} \label{c3.3}
 The map $\omega'\mapsto\on{Im}\mathcal{A}_{\omega'} $ is a bijection from $\on{Sc}(V_{\widehat{\h}}(1,0),\omega)$ to  $\on{Reg}(\h)$.
\end{cor}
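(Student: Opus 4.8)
The plan is to combine Proposition~\ref{p3.2} with elementary linear algebra over $\C$ relating self-adjoint idempotents to their images. By Proposition~\ref{p3.2}, $\omega'\mapsto\mathcal{A}_{\omega'}$ is already a bijection $\on{Sc}(V_{\widehat{\h}}(1,0),\omega)\to\on{SymId}(\h)$, so it suffices to show that $\mathcal{A}\mapsto\on{Im}\mathcal{A}$ is a bijection from $\on{SymId}(\h)$ to $\on{Reg}(\h)$; the desired map is then the composite $\omega'\mapsto\mathcal{A}_{\omega'}\mapsto\on{Im}\mathcal{A}_{\omega'}$.

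First I would check well-definedness: if $\mathcal{A}\in\on{SymId}(\h)$, then $\h=\on{Im}\mathcal{A}\oplus\on{Ker}\mathcal{A}$ (from $\mathcal{A}^2=\mathcal{A}$), and self-adjointness gives $\langle\mathcal{A}u,v\rangle=\langle u,\mathcal{A}v\rangle$, from which $\on{Im}\mathcal{A}\perp\on{Ker}\mathcal{A}$: for $u\in\h$ and $w\in\on{Ker}\mathcal{A}$, $\langle\mathcal{A}u,w\rangle=\langle u,\mathcal{A}w\rangle=0$. Hence $\on{Ker}\mathcal{A}=(\on{Im}\mathcal{A})^{\perp}$ and the decomposition $\on{Im}\mathcal{A}\oplus\on{Ker}\mathcal{A}=\h$ is orthogonal, which forces $\langle\cdot,\cdot\rangle|_{\on{Im}\mathcal{A}}$ to be nondegenerate (a vector in $\on{Im}\mathcal{A}$ orthogonal to all of $\on{Im}\mathcal{A}$ would, being also orthogonal to $\on{Ker}\mathcal{A}$, be orthogonal to all of $\h$, hence zero). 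So $\on{Im}\mathcal{A}\in\on{Reg}(\h)$.

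Next, surjectivity: given $\h'\in\on{Reg}(\h)$, nondegeneracy of $\langle\cdot,\cdot\rangle|_{\h'}$ implies $\h=\h'\oplus(\h')^{\perp}$, and I would let $\mathcal{A}$ be the projection onto $\h'$ along $(\h')^{\perp}$. Then $\mathcal{A}^2=\mathcal{A}$ and $\on{Im}\mathcal{A}=\h'$, and self-adjointness follows by writing $u=u_1+u_2$, $v=v_1+v_2$ with $u_i,v_i$ in the respective summands and using orthogonality of the decomposition to get $\langle\mathcal{A}u,v\rangle=\langle u_1,v_1\rangle=\langle u,\mathcal{A}v\rangle$. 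For injectivity: if $\mathcal{A},\mathcal{B}\in\on{SymId}(\h)$ have $\on{Im}\mathcal{A}=\on{Im}\mathcal{B}=\h'$, then both have kernel $(\h')^{\perp}$ by the previous paragraph, and a self-adjoint idempotent is determined by its image and kernel (it acts as the identity on $\on{Im}$ since $\mathcal{A}x=\mathcal{A}^2 y=\mathcal{A}y=x$ for $x=\mathcal{A}y$, and as zero on $\on{Ker}$), so $\mathcal{A}=\mathcal{B}$.

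Finally I would note that the statement as written does not claim order-preservation, so no further work is needed here; that refinement is Theorem~\ref{thm1.2}(1). I do not anticipate a serious obstacle: the only mild subtlety is that over $\C$ one cannot invoke positive-definiteness, so the orthogonality argument must be run purely through nondegeneracy and the direct-sum decomposition rather than through any inner-product-space intuition, but the computations above avoid that pitfall.
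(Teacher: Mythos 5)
Your proof is correct and follows essentially the same route as the paper's: both factor the map through the bijection of Proposition~\ref{p3.2} and then match $\on{SymId}(\h)$ with $\on{Reg}(\h)$ via the orthogonal decomposition $\h=\on{Im}\mathcal{A}\oplus\on{Ker}\mathcal{A}$ in one direction and the orthogonal projection onto a regular subspace in the other. You are somewhat more explicit than the paper on the injectivity step (that a self-adjoint idempotent is determined by its image) and on working with nondegeneracy rather than positive-definiteness over $\C$, but these are refinements of the same argument, not a different one.
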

\begin{proof}
For $\omega'\in \on{Sc}(V_{\widehat{\h}}(1,0),\omega)$, there is a unique $\mathcal{A}_{\omega'} \in \on{SymId}(\h)$.
Hence we have $\h=\on{Im}\mathcal{A}_{\omega'}\oplus \on{Ker}\mathcal{A}_{\omega'}$,
where $\on{Ker}\mathcal{A}_{\omega'}=\on{Im}\mathcal{A}_{\omega'}^{\bot}$.
Since the restriction $(\cdot,\cdot)$ on $\h$ to $\on{Im}\mathcal{A}_{\omega'}$ is still nondegenerate, then by Proposition 3.2, we know that $\omega'\mapsto
\mathcal{A}_{\omega'}$ gives a subspace $\on{Im}\mathcal{A}_{\omega'}\in \on{Reg}(\h)$. Conversely, let $\h'$ be a regular subspace of $\h$. There exists a projection $\mathcal{A}\in \on{End}(\h)$ such that $\on{Im}(\mathcal{A})=\h'$ and $(\mathcal{A}(u),v)=(u,\mathcal{A}(v))~for~u,v\in \h$.
Thus, $\on{Reg}(\h)\subset \on{Sc}(V_{\widehat{\h}}(1,0),\omega)$. By Proposition 3.2, we know that there exists a unique
$\omega'\in \on{Sc}(V_{\widehat{\h}}(1,0),\omega)$ such that $\mathcal{A}_{\omega'}=\mathcal{A}$. Thus, $\h'\mapsto \omega'$ gives the
inverse of the map $\omega'\mapsto\on{Im}\mathcal{A}_{\omega'}$.
\end{proof}

Next, we shall find a partial ordering on $\on{SymId}(\h)$.
\begin{defn}\label{def3.2}
Let $\mathcal{A}_1,\mathcal{A}_2\in\on{SymId}(\h)$. If they satisfy $ \mathcal{A}_2\mathcal{A}_1=\mathcal{A}_1\mathcal{A}_2=\mathcal{A}_2$, we define  $\mathcal{A}_2\leq \mathcal{A}_1$. Then $\leq$ is a partial
ordering on $\on{SymId}(\h)$.
\end{defn}
\begin{lem}\label{p3.5}
For $\mathcal{A}_1,\mathcal{A}_2\in \on{SymId}(\h)$, $\mathcal{A}_1\leq \mathcal{A}_2$ if and only if $\on{Im}\mathcal{A}_1\subset \on{Im}\mathcal{A}_2$.
\end{lem}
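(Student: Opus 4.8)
The plan is to prove the equivalence by relating each condition to the linear-algebraic structure of idempotents. Recall that for $\mathcal{A}_1, \mathcal{A}_2 \in \on{SymId}(\h)$ we have orthogonal decompositions $\h = \on{Im}\mathcal{A}_i \oplus \on{Ker}\mathcal{A}_i$, with $\mathcal{A}_i$ acting as the identity on $\on{Im}\mathcal{A}_i$ and as zero on $\on{Ker}\mathcal{A}_i$, and moreover $\on{Ker}\mathcal{A}_i = (\on{Im}\mathcal{A}_i)^{\perp}$ since $\mathcal{A}_i$ is self-adjoint. So the whole statement is really a statement about orthogonal projections onto regular subspaces, and I would phrase the argument accordingly.

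First I would prove the forward direction: assume $\mathcal{A}_1 \leq \mathcal{A}_2$, i.e. $\mathcal{A}_2\mathcal{A}_1 = \mathcal{A}_1\mathcal{A}_2 = \mathcal{A}_1$ (note: with Definition~\ref{def3.2}'s convention, $\mathcal{A}_1 \leq \mathcal{A}_2$ means $\mathcal{A}_1$ is the "smaller" idempotent, so the defining relations are $\mathcal{A}_2\mathcal{A}_1 = \mathcal{A}_1\mathcal{A}_2 = \mathcal{A}_1$). Then for any $v \in \on{Im}\mathcal{A}_1$ we have $v = \mathcal{A}_1 v = \mathcal{A}_2\mathcal{A}_1 v = \mathcal{A}_2 v$, so $v \in \on{Im}\mathcal{A}_2$; hence $\on{Im}\mathcal{A}_1 \subseteq \on{Im}\mathcal{A}_2$. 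For the converse, assume $\on{Im}\mathcal{A}_1 \subseteq \on{Im}\mathcal{A}_2$. To check $\mathcal{A}_2\mathcal{A}_1 = \mathcal{A}_1$: for any $u \in \h$, $\mathcal{A}_1 u \in \on{Im}\mathcal{A}_1 \subseteq \on{Im}\mathcal{A}_2$, so $\mathcal{A}_2(\mathcal{A}_1 u) = \mathcal{A}_1 u$. To check $\mathcal{A}_1\mathcal{A}_2 = \mathcal{A}_1$, I would use self-adjointness: $\mathcal{A}_1\mathcal{A}_2 = (\mathcal{A}_2\mathcal{A}_1)^{*} = \mathcal{A}_1^{*} = \mathcal{A}_1$, where $*$ denotes the adjoint with respect to $\langle\cdot,\cdot\rangle$ and we have just shown $\mathcal{A}_2\mathcal{A}_1 = \mathcal{A}_1$. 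Alternatively, argue directly that $\on{Ker}\mathcal{A}_2 = (\on{Im}\mathcal{A}_2)^{\perp} \subseteq (\on{Im}\mathcal{A}_1)^{\perp} = \on{Ker}\mathcal{A}_1$, so $\mathcal{A}_1$ kills $\on{Ker}\mathcal{A}_2$ while on $\on{Im}\mathcal{A}_2$ the operator $\mathcal{A}_2$ is the identity, giving $\mathcal{A}_1\mathcal{A}_2 = \mathcal{A}_1$ on each summand of $\h = \on{Im}\mathcal{A}_2 \oplus \on{Ker}\mathcal{A}_2$.

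I do not anticipate a genuine obstacle here — the result is elementary linear algebra about orthogonal projections. The one point requiring mild care is bookkeeping with the direction of the partial order in Definition~\ref{def3.2}: one must be consistent about whether $\mathcal{A}_2\mathcal{A}_1 = \mathcal{A}_1\mathcal{A}_2 = \mathcal{A}_2$ encodes $\mathcal{A}_2 \leq \mathcal{A}_1$ (as written in the definition) and then match the indices in the lemma statement correctly, so that $\on{Im}$ of the smaller operator lands inside $\on{Im}$ of the larger one. A second minor point is that one should invoke that $\on{Ker}\mathcal{A}_i$ is genuinely the orthogonal complement of $\on{Im}\mathcal{A}_i$ (needed for the $\perp$-reversing step), which follows from self-adjointness of $\mathcal{A}_i$ together with $\mathcal{A}_i^2 = \mathcal{A}_i$; this was already observed in the proof of Corollary~\ref{c3.3}. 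With these conventions pinned down, the proof is two short implications as above.
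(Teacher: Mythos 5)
Your proof is correct and follows essentially the same route as the paper: both directions come down to the facts that an idempotent acts as the identity on its image and that self-adjointness gives $\on{Ker}\mathcal{A}_i=(\on{Im}\mathcal{A}_i)^{\perp}$, so inclusion of images reverses to inclusion of kernels. Your use of the adjoint to get $\mathcal{A}_1\mathcal{A}_2=(\mathcal{A}_2\mathcal{A}_1)^{*}=\mathcal{A}_1$ is a slight streamlining of the paper's explicit decomposition argument, but it is not a genuinely different approach, and your reading of the direction of the partial order in Definition~\ref{def3.2} is the correct one.
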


\begin{proof}
For $\mathcal{A}_1,\mathcal{A}_2\in \on{SymId}(\h)$, $\mathcal{A}_1\leq \mathcal{A}_2$, i.e,
$\mathcal{A}_1\mathcal{A}_2=\mathcal{A}_2\mathcal{A}_1=\mathcal{A}_1.$
Then we have
$\on{Im}\mathcal{A}_1=\on{Im}\mathcal{A}_2\mathcal{A}_1.$
And since $\on{Im}\mathcal{A}_2\mathcal{A}_1\subset \on{Im}\mathcal{A}_2$, then $\on{Im}\mathcal{A}_1\subset \on{Im}\mathcal{A}_2$.

Conversely, for  $\mathcal{A}_1,\mathcal{A}_2\in \on{SymId}(\h)$,
 we have $\h=\on{Ker}\mathcal{A}_1\oplus \on{Im}\mathcal{A}_1=\on{Ker}\mathcal{A}_2\oplus \on{Im}\mathcal{A}_2.$
For $\forall \alpha\in \h$, there is
$$\alpha=\alpha_1+\alpha_2=\beta_1+\beta_2,$$
where $\alpha_1\in \on{Ker}\mathcal{A}_1,\alpha_2\in\on{Im}\mathcal{A}_1;\beta_1\in \on{Ker}\mathcal{A}_2,\beta_2\in \on{Im}\mathcal{A}_2.$ Since $\mathcal{A}_2(\beta_2)=\beta_2$, we have
$$
\mathcal{A}_1\mathcal{A}_2(\alpha)=\mathcal{A}_1(\mathcal{A}_2(\beta_1+\beta_2))
=\mathcal{A}_1(\mathcal{A}_2(\beta_2))=\mathcal{A}_1(\beta_2).
$$
If $\on{Im}\mathcal{A}_1\subset \on{Im}\mathcal{A}_2$, then $\on{Ker}\mathcal{A}_2\subset
\on{Ker}\mathcal{A}_1$. So we have $\mathcal{A}_1(\alpha)=\mathcal{A}_1(\beta_1+\beta_2)=\mathcal{A}_1(\beta_2).$ Thus, we get $\mathcal{A}_1\mathcal{A}_2=\mathcal{A}_1$.

Similarly, if $\on{Im}\mathcal{A}_1\subset \on{Im}\mathcal{A}_2$, we have
$$
\mathcal{A}_2\mathcal{A}_1(\alpha)=\mathcal{A}_2(\mathcal{A}_1(\alpha_1+\alpha_2))
=\mathcal{A}_2(\mathcal{A}_1(\alpha_2))=\mathcal{A}_1(\alpha_2)=\mathcal{A}_1(\alpha).
$$
And then $\mathcal{A}_2\mathcal{A}_1=\mathcal{A}_1$. Thus, we get $\mathcal{A}_1\leq \mathcal{A}_2$.\end{proof}
\begin{prop} Under the bijection in Proposition~\ref{p3.2},
$\omega^1\preceq\omega^2$ in $\on{Sc}(V_{\widehat{\h}}(1,0),\omega)$ if and only if
 $\on{Im}\mathcal{A}_{\omega^1}\subset \on{Im}\mathcal{A}_{\omega^2}.$
\end{prop}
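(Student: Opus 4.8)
The plan is to reduce the statement about the partial ordering $\preceq$ on $\on{Sc}(V_{\widehat{\h}}(1,0),\omega)$ to the already-established statement about the partial ordering $\leq$ on $\on{SymId}(\h)$ (Lemma~\ref{p3.5}), by passing through Definition~\ref{def3.2}. More precisely, I would show that under the bijection $\omega' \mapsto \mathcal{A}_{\omega'}$ of Proposition~\ref{p3.2}, the relation $\omega^1 \preceq \omega^2$ is equivalent to $\mathcal{A}_{\omega^1} \leq \mathcal{A}_{\omega^2}$; Lemma~\ref{p3.5} then immediately upgrades this to $\on{Im}\mathcal{A}_{\omega^1} \subset \on{Im}\mathcal{A}_{\omega^2}$, which is what is claimed.

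First I would unwind the definition of $\preceq$: by Definition~\ref{d2.7}, $\omega^1 \preceq \omega^2$ means $C_W(C_W(<\omega^1>)) \subseteq C_W(C_W(<\omega^2>))$ with $W = V_{\widehat{\h}}(1,0)$. By Theorem~\ref{thm1.3}(1) (or directly by the commutant computation used in its proof), for each $\omega' \in \on{Sc}(V_{\widehat{\h}}(1,0),\omega)$ one has $C_W(C_W(<\omega'>)) = V_{\widehat{\on{Im}\mathcal{A}_{\omega'}}}(1,0)$, the Heisenberg subalgebra generated by the regular subspace $\on{Im}\mathcal{A}_{\omega'}$. Since $V_{\widehat{\h'}}(1,0) \subseteq V_{\widehat{\h''}}(1,0)$ inside $V_{\widehat{\h}}(1,0)$ if and only if $\h' \subseteq \h''$ (the degree-one part of $V_{\widehat{\h'}}(1,0)$ is exactly $\h'$, viewed as $\h'(-1)\mathbf{1}$, and a Heisenberg VOA is generated by its weight-one space), the containment of double commutants is equivalent to $\on{Im}\mathcal{A}_{\omega^1} \subseteq \on{Im}\mathcal{A}_{\omega^2}$. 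That already gives one direction; for a fully self-contained argument I would alternatively invoke Proposition~\ref{p2.8}, translate its five conditions \eqref{2.6} into matrix identities for $A_{\omega^1}, A_{\omega^2}$ exactly as in the proof of Proposition~\ref{p3.1}, and check that they collapse precisely to $A_{\omega^2}A_{\omega^1} = A_{\omega^1}A_{\omega^2} = A_{\omega^1}$, i.e. $\mathcal{A}_{\omega^1} \leq \mathcal{A}_{\omega^2}$ in the sense of Definition~\ref{def3.2}.

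Concretely, the computational core is: the conditions $L^2(0)\omega^1 = 2\omega^1$, $L^2(1)\omega^1 = 0$, $L^2(n)\omega^1 = 0$ for $n \geq 3$ say that $\omega^1$ is a weight-two lowest-weight-type vector for the Virasoro action of $\omega^2$, which in matrix terms forces $A_{\omega^2}A_{\omega^1} = A_{\omega^1}$ (using $B_{\omega^1} = B_{\omega^2} = 0$ since $\Lambda = 0$); symmetry of both idempotents then yields $A_{\omega^1}A_{\omega^2} = A_{\omega^1}^{tr} = (A_{\omega^2}A_{\omega^1})^{tr} = A_{\omega^1}^{tr}A_{\omega^2}^{tr}$... more cleanly, transpose $A_{\omega^2}A_{\omega^1} = A_{\omega^1}$ to get $A_{\omega^1}A_{\omega^2} = A_{\omega^1}$, and the condition $L^2(2)\omega^1 = L^1(2)\omega^1$ becomes automatic. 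Conversely, if $A_{\omega^2}A_{\omega^1} = A_{\omega^1}A_{\omega^2} = A_{\omega^1}$ one reads off all of \eqref{2.6} directly. Having established $\omega^1 \preceq \omega^2 \iff \mathcal{A}_{\omega^1} \leq \mathcal{A}_{\omega^2}$, Lemma~\ref{p3.5} finishes the proof.

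The step I expect to be the main obstacle is the bookkeeping in translating the operator conditions \eqref{2.6} into the matrix identity — in particular verifying that the weight-$(-1)$ condition $L^2(-1)\omega^1 = L^1(-1)\omega^1$ and the weight-$2$ condition $L^2(2)\omega^1 = L^1(2)\omega^1$ are genuinely consequences of the idempotent/symmetry relations rather than extra constraints. Since $\Lambda = 0$ kills the $B$-vectors, this should go through cleanly, and I would streamline it by citing the computation already performed in the proof of Proposition~\ref{p3.1} rather than redoing it; the cleanest writeup simply says: by Proposition~\ref{p2.8} and the dictionary of Proposition~\ref{p3.1}, $\omega^1 \preceq \omega^2$ iff $\mathcal{A}_{\omega^2}\mathcal{A}_{\omega^1} = \mathcal{A}_{\omega^1}\mathcal{A}_{\omega^2} = \mathcal{A}_{\omega^1}$, iff $\mathcal{A}_{\omega^1} \leq \mathcal{A}_{\omega^2}$ by Definition~\ref{def3.2}, iff $\on{Im}\mathcal{A}_{\omega^1} \subset \on{Im}\mathcal{A}_{\omega^2}$ by Lemma~\ref{p3.5}.
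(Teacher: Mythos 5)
Your second, ``self-contained'' route is essentially the paper's own proof: invoke Proposition~\ref{p2.8}, translate the conditions \eqref{2.6} into matrix identities as in Proposition~\ref{p3.1}, conclude $\mathcal{A}_{\omega^1}\leq\mathcal{A}_{\omega^2}$, and finish with Lemma~\ref{p3.5}. One small correction to your description of the ``computational core'': the translation of \eqref{2.6} does not directly force $\mathcal{A}_{\omega^2}\mathcal{A}_{\omega^1}=\mathcal{A}_{\omega^1}$; what comes out is the anticommutator relation $\mathcal{A}_{\omega^1}\mathcal{A}_{\omega^2}+\mathcal{A}_{\omega^2}\mathcal{A}_{\omega^1}=2\mathcal{A}_{\omega^1}$, and the paper then multiplies this identity by $\mathcal{A}_{\omega^1}$ on each side and uses idempotency to deduce that $\mathcal{A}_{\omega^1}$ and $\mathcal{A}_{\omega^2}$ commute, whence $\mathcal{A}_{\omega^1}\mathcal{A}_{\omega^2}=\mathcal{A}_{\omega^2}\mathcal{A}_{\omega^1}=\mathcal{A}_{\omega^1}$; your ``transpose to get the other product'' step presupposes the equality you are trying to establish. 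Your first route (via the identification $C_W(C_W(\langle\omega'\rangle))=V_{\widehat{\on{Im}\mathcal{A}_{\omega'}}}(1,0)$) would also work, but that identification is only proved in Section~4, after this proposition, so as written it would invert the paper's logical order.
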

 \begin{proof}Let $\mathcal{A}_{\omega^1},\mathcal{A}_{\omega^2}\in \on{SymId}(\h)$ be the  linear transformations of $\h$ corresponding to $\omega^1,\omega^2$, respectively.
 For convenience, we write $\mathcal{A}_{\omega^1}, \mathcal{A}_{\omega^2}$ as
 $\mathcal{A}_1, \mathcal{A}_2$, respectively.
 If  $\omega^1\preceq \omega^2 $, that is, $\omega^1$ is a semi-conformal vector of the subalgebra $C_{V_{\widehat{\h}}(1,0)}(C_{V_{\widehat{\h}}(1,0)}(<\omega^2>))$, then by Proposition 2.8 and an argument similar to the proof of Proposition 3.1,  we have the relation $\mathcal{A}_1\mathcal{A}_2+\mathcal{A}_2\mathcal{A}_1=2\mathcal{A}_1.$
Hence we have
$$(\mathcal{A}_1\mathcal{A}_2+\mathcal{A}_2\mathcal{A}_1)\mathcal{A}_1
=\mathcal{A}_1\mathcal{A}_2\mathcal{A}_1+\mathcal{A}_2\mathcal{A}_1\mathcal{A}_1
=\mathcal{A}_1\mathcal{A}_2\mathcal{A}_1+\mathcal{A}_2\mathcal{A}_1
=2\mathcal{A}_1^2=2\mathcal{A}_1;$$
$$\mathcal{A}_1(\mathcal{A}_1\mathcal{A}_2+\mathcal{A}_2\mathcal{A}_1)
=\mathcal{A}_1\mathcal{A}_1\mathcal{A}_2+\mathcal{A}_1\mathcal{A}_2\mathcal{A}_1
=\mathcal{A}_1\mathcal{A}_2+\mathcal{A}_1\mathcal{A}_2\mathcal{A}_1=2\mathcal{A}_1^2=2\mathcal{A}_1.$$
Thus,
$\mathcal{A}_1\mathcal{A}_2=\mathcal{A}_2\mathcal{A}_1$. Then we have $\mathcal{A}_1\mathcal{A}_2=\mathcal{A}_2\mathcal{A}_1=\mathcal{A}_1$, i.e, $\mathcal{A}_1\leq \mathcal{A}_2$.

Conversely, if $\mathcal{A}_1,\mathcal{A}_2\in \on{SymId}(\h)$ and $\mathcal{A}_1\leq \mathcal{A}_2$, then we have $\mathcal{A}_1\mathcal{A}_2=\mathcal{A}_2\mathcal{A}_1=\mathcal{A}_1.$
 Let $\omega^1,\omega^2\in \on{Sc}(V_{\widehat{\h}}(1,0),\omega)$ be semi-conformal vectors  corresponding to  $\mathcal{A}_1,\mathcal{A}_2$, respectively. Again using an argument  similar to the proof of Proposition 3.1, we can get the relations (\ref{2.6}), i.e, $\omega^1\preceq \omega^2$.
 \end{proof}
 By above proposition, we know that  the partial ordering $\subset$ (the inclusion relation) of $\on{Reg}(\h)$ coincides with  the partial ordering $\preceq$ on $\on{Sc}(V_{\widehat{\h}}(1,0),\omega)$.

%
%
\begin{proofof}{\bf Proof of Theorem 1.2.}
According to  Corollary 3.3 and Proposition 3.6, we know the bijection $\rho$ preserves orders between $\on{Sc}(V_{\widehat{\h}}(1,0),\omega)$ and $\on{Reg(\h)}$.

As we know, $\on{Aut}(V_{\widehat{\h}}(1,0),\omega)=\on{O}(\h)$, this group acts on  $\on{Sc}(V_{\widehat{\h}}(1,0),\omega)$ as follows:
 for a $\omega_A\in \on{Sc}(V_{\widehat{\h}}(1,0),\omega)$ and $\sigma\in  \on{Aut}(V_{\widehat{\h}}(1,0),\omega)$,  we have $ \sigma(V_2)=V_2$ and $ \sigma $ preserves the bilinear form on $\h$. Let $o\in \on{O}_d(\C)$ be the matrix  of $ \sigma$ with respect to the fixed orthonormal basis, then by the definition, we have $\sigma(\omega_A)=\omega_{oAo^{tr}}\in \on{Sc}(V_{\widehat{\h}}(1,0),\omega)$. Thus $ \on{Im}(oAo^{\tr})=\sigma(\on{Im}(A))$.

For a $\h'\in \on{Reg}(\h)$ and $\sigma\in  \on{Aut}(V_{\widehat{\h}}(1,0),\omega)$, $\sigma(\h')\in \on{Reg}(\h)$.
 This gives an action of $\on{Aut}(V_{\widehat{\h}}(1,0),\omega)$ on $\on{Reg}(\h)$.

According to the actions of  $\on{Aut}(V_{\widehat{\h}}(1,0),\omega)$ on $\on{Sc}(V_{\widehat{\h}}(1,0),\omega)$ and $\on{Reg}(\h)$, we know that the above bijection also preserves the actions of  $\on{Aut}(V_{\widehat{\h}}(1,0),\omega)$ on $\on{Sc}(V_{\widehat{\h}}(1,0),\omega)$ and $\on{Reg}(\h)$. So the first assertion 1) holds.

According to the first assertion  1), we can consider the action of $\on{Aut}(V_{\widehat{\h}}(1,0),\omega)$ on $\on{Sc}(V_{\widehat{\h}}(1,0),\omega)$  by the action of $\on{Aut}(V_{\widehat{\h}}(1,0),\omega)$ on $\on{Reg}(\h)$. Then we have the second assertion.

The third  assertion follows from the  second assertion. \qed
\end{proofof}
\section{Conformally closed semi-confromal subalgebras  of the Heisenberg vertex operator algebra $(V_{\widehat{\h}}(1,0),\omega)$.}
In this section,  we shall describe the conformally closed semi-conformal subalgebras of the Heisenberg vertex operator algebra $(V_{\widehat{\h}}(1,0), \omega=\omega_{\Lambda})$ with  $ \Lambda=0$.

For $\omega' \in \on{Sc}(V_{\widehat{\h}}(1,0),\omega)$ as defined in \eqref{f3.2} with the corresponding symmetric idempotent matrix $A_{\omega'}$ given in \eqref{f3.3}. In this case, $B_{\omega'}=0$ in \eqref{f3.2}.  By Theorem~\ref{thm1.2}, we know that
there is a  regular subspace $\on{Im}\mathcal{A}_{\omega'}$ corresponding to $\omega'$.
\begin{prop}
\label{p3.7}
$$\on{Im}\mathcal{A}_{\omega'}=\on{Ker}_{V_{\widehat{\h}}(1,0)}(L(-1)-L'(-1))\cap V_{\widehat{\h}}(1,0)_1, \; \on{Ker}\mathcal{A}_{\omega'}= \on{Ker}_{V_{\widehat{\h}}(1,0)} L'(-1)\cap  V_{\widehat{\h}}(1,0)_1. $$
\end{prop}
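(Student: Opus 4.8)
The plan is to identify $\on{Im}\mathcal{A}_{\omega'}$ and $\on{Ker}\mathcal{A}_{\omega'}$ with explicit subspaces of $V_{\widehat{\h}}(1,0)_1 = \h(-1)\cdot\mathbf{1} \cong \h$ by computing the operators $L(-1)$ and $L'(-1)$ on degree-one vectors. First I would recall that $V_{\widehat{\h}}(1,0)_1$ has basis $\{h_1(-1)\cdot\mathbf{1},\dots,h_d(-1)\cdot\mathbf{1}\}$, giving an identification of $\h$ with $V_{\widehat{\h}}(1,0)_1$ via $h \mapsto h(-1)\cdot\mathbf{1}$. Since $\Lambda = 0$ we have $\omega = \tfrac12\sum_i h_i(-1)^2\cdot\mathbf{1}$ and $L(-1) = \omega_0$; a direct computation gives $L(-1)(h(-1)\cdot\mathbf{1}) = h(-2)\cdot\mathbf{1}$ for every $h\in\h$, so $L(-1)$ is injective on $V_{\widehat{\h}}(1,0)_1$ and there is nothing subtle there. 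The real content is the computation of $L'(-1) = \omega'_0$ on $V_{\widehat{\h}}(1,0)_1$: writing $\omega' = \tfrac12\sum_{i,j}(A_{\omega'})_{ij} h_i(-1)h_j(-1)\cdot\mathbf{1}$ (using the symmetric matrix $A_{\omega'}$ of \eqref{f3.3}, with $B_{\omega'}=0$), I would use the commutator formula for the modes of $h_i(-1)h_j(-1)\cdot\mathbf{1}$ acting on $h_k(-1)\cdot\mathbf{1}$ to obtain
\begin{equation*}
L'(-1)(h_k(-1)\cdot\mathbf{1}) = \sum_{i} (A_{\omega'})_{ik}\, h_i(-2)\cdot\mathbf{1} = (\mathcal{A}_{\omega'}(h_k))(-2)\cdot\mathbf{1}.
\end{equation*}
This is the step I expect to be the main (only) obstacle: carefully tracking the combinatorics of the normal-ordered product $\,{:}h_i(-1)h_j(-1){:}$ and its mode $\omega'_0$ so that the matrix $A_{\omega'}$ — rather than the raw coefficients $a_{ij}$ — emerges, with the factor-of-2 conventions on the diagonal handled correctly; once this identity is in hand everything else is formal.

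Granting the two formulas above, the proof concludes by linear algebra on $\h$. Under the identification $V_{\widehat{\h}}(1,0)_1 \cong \h$ and using that $L(-1)$ and the map $h\mapsto h(-2)\cdot\mathbf{1}$ are both injective on this space, I get
\begin{equation*}
(L(-1) - L'(-1))(h(-1)\cdot\mathbf{1}) = \big((\mathrm{id} - \mathcal{A}_{\omega'})(h)\big)(-2)\cdot\mathbf{1},
\qquad
L'(-1)(h(-1)\cdot\mathbf{1}) = \big(\mathcal{A}_{\omega'}(h)\big)(-2)\cdot\mathbf{1}.
\end{equation*}
Hence $\on{Ker}_{V_{\widehat{\h}}(1,0)}(L(-1)-L'(-1)) \cap V_{\widehat{\h}}(1,0)_1$ corresponds to $\on{Ker}(\mathrm{id}-\mathcal{A}_{\omega'}) = \on{Im}\mathcal{A}_{\omega'}$ (the last equality because $\mathcal{A}_{\omega'}$ is idempotent), and $\on{Ker}_{V_{\widehat{\h}}(1,0)}L'(-1) \cap V_{\widehat{\h}}(1,0)_1$ corresponds to $\on{Ker}\mathcal{A}_{\omega'}$. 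This is exactly the asserted pair of identities, so the proof is complete once the mode computation is carried out; optionally I would remark that this is consistent with Proposition~\ref{p2.6} and Proposition~\ref{p2.8}, since on the conformally closed extension $C_{V_{\widehat{\h}}(1,0)}(C_{V_{\widehat{\h}}(1,0)}(<\omega'>))$ one has $L(-1)=L'(-1)$, and $\omega - \omega'$ is the conformal vector of the commutant with associated idempotent $\mathrm{id}-\mathcal{A}_{\omega'}$.
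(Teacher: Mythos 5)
Your proposal is correct and follows essentially the same route as the paper: the paper likewise writes $L'(-1)$ explicitly in modes, observes that its action on $V_{\widehat{\h}}(1,0)_1\cong\h$ is governed by the matrix $A_{\omega'}$ (so that membership in $\on{Ker}(L(-1)-L'(-1))\cap V_1$ amounts to the linear system $(I-A_{\omega'})X=0$), and then uses idempotency together with the substitution $\omega'\mapsto\omega-\omega'$, $A_{\omega'}\mapsto I-A_{\omega'}$ for the second identity. Your mode computation $L'(-1)(h_k(-1)\cdot\mathbf{1})=(\mathcal{A}_{\omega'}(h_k))(-2)\cdot\mathbf{1}$ is the correct key identity, with the factor-of-2 diagonal convention absorbed exactly as you anticipate.
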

\begin{proof}
By Proposition 3.11.11 and Theorem 3.11.12 in \cite{LL}, we have
$$ \on{Ker}_{V_{\widehat{\h}}(1,0)} L'(-1)=C_{V_{\widehat{\h}}(1,0)}(<\omega'>);$$
 $$\on{Ker}_{V_{\widehat{\h}}(1,0)}(L(-1)-L'(-1))=C_{V_{\widehat{\h}}(1,0)}(C_{V_{\widehat{\h}}(1,0)}(<\omega'>)).$$
Then $\on{Ker}_{V_{\widehat{\h}}(1,0)}L'(-1)_{1}\cap V_{\widehat{\h}}(1,0)_1\subset  V_{\widehat{\h}}(1,0)_1\cong \h$.

With  $\omega' \in \on{Sc}(V_{\widehat{\h}}(1,0),\omega)$  written as in  \eqref{f3.2} and $B_{\omega'}=0$,
we have
$$L'(-1)=\sum\limits_{i=1}^{d}a_{ii}\sum\limits_{k\in \Z}:h_i(k)h_i(-1-k):+\sum\limits_{1\leq i<j\leq d}a_{ij}
\sum\limits_{k\in \Z}:h_i(k)h_j(-1-k):,$$
where $:\cdots:$ is the normal order product.
Hence
\begin{equation}\label{3.14} L(-1)-L'(-1)=\sum\limits_{i=1}^{d}(\frac{1}{2}-a_{ii})\sum\limits_{k\in \Z}:h_i(k)h_i(-1-k):-\sum\limits_{1\leq i<j\leq d}a_{ij}
\sum\limits_{k\in \Z}:h_i(k)h_j(-1-k):.\end{equation}

For $\forall h\in V_{\widehat{\h}}(1,0)_{1}$, set $h=\sum\limits_{m=1}^{d}a_mh_{m}(-1)\cdot\mathbf{1}$. Using the formula (\ref{3.14}), we have that
$h\in \on{Ker}_{V_{\widehat{\h}}(1,0)}(L(-1)-L'(-1))\cap V_{\widehat{\h}}(1,0)_1$ if and only if the vector $(a_1,a_2,\cdots,a_d)$ is a solution of the linear equation system \begin{eqnarray}\label{3.15}\left\{
 \begin{array}{llll}
 (1-2a_{11})x_1-a_{12}x_2-\cdots-a_{1d}x_d=0,\\
 -a_{12}x_1+(1-2a_{22})x_2-\cdots-a_{2d}x_d=0,\\
 \ \cdots\ \ \ \ \cdots\ \ \ \ \cdots\ \ \ \ \cdots\ \ \ \ \cdots\ \ \ \ \cdots\\
 -a_{1d}x_1-a_{2d}x_2-\cdots+(1-2a_{dd})x_d=0.
 \end{array}
 \right.
 \end{eqnarray}
Let $X=(x_1,x_2,\cdots,x_d)^{tr}$. Then the linear equation system
(\ref{3.15}) becomes the matrix equation
$(I-A_{\omega'})X=0,$ where $I$ is the $d\times d$ identity matrix. So we have $\on{Im}\mathcal{A}_{\omega'}=\on{Ker}_{V_{\widehat{\h}}(1,0)}(L(-1)-L'(-1))\cap V_{\widehat{\h}}(1,0)_1$. Note that  the corresponding matrix for $ \omega-\omega'$ is $A_{\omega-\omega'}=I-A_{\omega'}$.   Hence  with $\omega'$ replaced by $\omega-\omega'$, we have $ \on{Ker}\mathcal{A}_{\omega'}= \on{Ker}_{V_{\widehat{\h}}(1,0)} L'(-1)\cap  V_{\widehat{\h}}(1,0)_1.$
\end{proof}


%
%

\begin{proofof}{\bf Proof of Theorem 1.3.} By Proposition 4.1, we have
$$C_{V_{\widehat{\h}}(1,0)}(C_{V_{\widehat{\h}}(1,0)}(<\omega'>))_1\cong \on{Im}\mathcal{A}_{\omega'}\quad \text{and} \quad C_{V_{\widehat{\h}}(1,0)}(<\omega'>)_1\cong\on{Ker}\mathcal{A}_{\omega'}.$$
Since $\on{Im}\mathcal{A}_{\omega'}$  and $\on{Ker}\mathcal{A}_{\omega'}$ are both regular subspaces of $\h$. Then
$$<\on{Im}\mathcal{A}_{\omega'}>\cong V_{\widehat{\on{Im}\mathcal{A}_{\omega'}}}(1,0)\quad \text{and} \quad
<\on{Ker}\mathcal{A}_{\omega'}>\cong V_{\widehat{\on{Ker}\mathcal{A}_{\omega'}}}(1,0)$$
in $V_{\widehat{\h}}(1,0)$.
Since $A_{\omega'}^2=A_{\omega'}$, we have
\begin{eqnarray}
\begin{array}{llll}
\omega'&=\frac{1}{2}(h_1(-1),\cdots, h_d(-1))A_{\omega'}
\left(\begin{array}{lllll}
h_1(-1)\\
~~~~~~\vdots\\
 h_d(-1)
\end{array}\right)\cdot\mathbf{1}\\
&=\frac{1}{2}
\left(A_{\omega'}
\left(\begin{array}{lllll}
h_1(-1)\\
~~~~~~~~~~~~~~~~~~~~\vdots\\
 h_d(-1)
\end{array}\right)\right)^t
\left(A_{\omega'}
\left(\begin{array}{lllll}
h_1(-1)\\
~~~~~~~~~~~~~~~~~~~~\vdots\\
 h_d(-1)
\end{array}\right)\right)\cdot\mathbf{1}.
\end{array}
\end{eqnarray}
 Then  $\omega'$ is the conformal vector of $<\on{Im}\mathcal{A}_{\omega'}>$.

Similarly, we have
$$\begin{array}{llllll}
\omega-\omega'&=
\frac{1}{2}(h_1(-1),\cdots, h_d(-1))(I-A_{\omega'})
\left(\begin{array}{lllll}
h_1(-1)\\
~~~~~~\vdots\\
 h_d(-1)
\end{array}\right)\cdot\mathbf{1}\\
&=\frac{1}{2}(h_1(-1),\cdots, h_d(-1))(I-A_{\omega'})^2
\left(\begin{array}{lllll}
h_1(-1)\\
~~~~~~\vdots\\
 h_d(-1)
\end{array}\right)\cdot\mathbf{1}\\
&=\frac{1}{2}
\left((I-A_{\omega'})
\left(\begin{array}{lllll}
h_1(-1)\\
~~~~~~\vdots\\
 h_d(-1)
\end{array}\right)\right)^t
\left((I-A_{\omega'})
\left(\begin{array}{lllll}
h_1(-1)\\
~~~~~~\vdots\\
 h_d(-1)
\end{array}\right)\right)\cdot\mathbf{1}.
\end{array}$$
Hence $\omega-\omega'$  is the conformal vector of $<\on{Ker}\mathcal{A}_{\omega'}>$. 

Since $\h=\on{Im}\mathcal{A}_{\omega'}\oplus \on{Ker}\mathcal{A}_{\omega'}$ and $\on{Ker}\mathcal{A}_{\omega'}=(\on{Im}\mathcal{A}_{\omega'})^{\perp}$,
then there is
$$V_{\widehat{\h}}(1,0)\cong <\on{Im}\mathcal{A}_{\omega'}>\otimes <\on{Ker}\mathcal{A}_{\omega'}>.$$
Thus we have \begin{equation}\label{3.17} C_{V_{\widehat{\h}}(1,0)}(<\omega'>)= V_{\widehat{\on{Ker}\mathcal{A}_{\omega'}}}(1,0)\quad \text{and}\end{equation}
\begin{equation} \label{3.18} C_{V_{\widehat{\h}}(1,0)}(C_{V_{\widehat{\h}}(1,0)}(<\omega'>))=C_{V_{\widehat{\h}}(1,0)}(<\omega-\omega'>)
=V_{\widehat{\on{Im}\mathcal{A}_{\omega'}}}(1,0).\end{equation}
Moreover, we have
$$C_{V_{\widehat{\h}}(1,0)}(V_{\widehat{\on{Ker}\mathcal{A}_{\omega'}}}(1,0))=V_{\widehat{\on{Im}\mathcal{A}_{\omega'}}}(1,0)\;\text{and}\;  C_{V_{\widehat{\h}}(1,0)}(V_{\widehat{\on{Im}\mathcal{A}_{\omega'}}}(1,0))=V_{\widehat{\on{Ker}\mathcal{A}_{\omega'}}}(1,0).$$
Finally, we can also get that
$$V_{\widehat{\h}}(1,0)\cong C_{V_{\widehat{\h}}(1,0)}(<\omega'>)\otimes C_{V_{\widehat{\h}}(1,0)}(C_{V_{\widehat{\h}}(1,0)}(<\omega'>)).$$\qed
\end{proofof} 
\begin{remark}
From above results, we know that all  of conformally closed semi-conformal subalgebras in $V_{\widehat{\h}}(1,0)$  are  Heisenberg vertex operator algebras generated by regular subspaces of the weight-one subspace $V_{\widehat{\h}}(1,0)_1$. 
\end{remark}
\section{Characterizations  of Heisenberg vertex operator algebras}
 In this section, we will use the structures of $\on{Sc}(V,\omega)$ and $\on{ScAlg}(V,\omega)$ to give two characterizations of Heisenberg vertex operator algebras. Let us fix the notation $ Y(u, z)=\sum_{n} u_n z^{-n-1}$ for vertex operators. However, in case  that the vertex operator algebra is defined by a Lie algebra $\h$, we will use the notation $ h(n)=h\otimes t^{n}$ in the algebra $\h[t,t^{-1}]$ with $h \in \h$. 

Let $V$ be a simple $\N$-graded vertex operator algebra  with $V_0=\C 1$. Such  $V$ is also called $a~simple~CFT~type$ vertex operator algebra (\cite{DLMM,DM}).  If $V$ satisfies the further condition
that $L(1)V_1=0$, it is called  {\em strong~CFT~type}. 
Li has shown (\cite{Li}) that such a vertex operator algebra $V$ 
has a unique nondegenerate invariant  bilinear form 
$\langle\cdot,\cdot\rangle$ up to a multiplication of a nonzero scalar. 
In particular, the restriction of $\langle\cdot,\cdot\rangle$ to $V_1$ endows $V_1$ 
with a nondegenerate symmetric invariant  bilinear form  defined by
$u_1(v)=\langle u,v\rangle \mbf{1}$ for $u,v\in V_1$.  For $v\in V_n$, the component operator $v_{n-1}$ is called the zero mode of $v$.  It is well-known that $V_1$ forms a Lie algebra with the bracket operation $[u,v]=u_0(v)$ for $u,v\in V_1$.

  In this paper, we will consider  vertex operator algebras $(V,\omega)$  that satisfy the following conditions:
(1) $V$ is a {\em simple CFT type} vertex operator algebra;
(2) The symmetric bilinear form $\langle u,v\rangle=u_1v$ for $u,v\in V_1$ is  nondegenerate. For convenience, we call  such a vertex operator algebra $(V,\omega)$   {\em nondegenerate simple CFT type}. We note that for any vertex operator algebra $(V, \omega)$, and any $\omega' \in \on{Sc}(V,\omega)$, one has
$C_{V}(C_{V}(<\omega'>))\otimes C_{V}(<\omega'>)\subseteq V$ as a conformal vertex operator subalgebra.

\begin{lem}\label{l6.1}\label{lem5.1}
 Let $(V,\omega)$ be a nondegenerate simple CFT type vertex operator algebra generated by $V_1$. Let 
 $(V', \omega')$ and $(V'', \omega'')$ be two vertex operator subalgebras with possible different conformal vectors. Assume that $(V, \omega)=(V', \omega')\otimes (V'', \omega'')$ is a tensor product of vertex operator algebras (see \cite[Section 3.12]{LL}).  Then
\begin{itemize}
\item [1)] $(V',\omega')$ and $(V'' ,\omega'')$ are semi-conformal subalgebras and  both are also non-degenerate simple CFT type;
\item [2)] $ V_1=V_1'\otimes \mathbf{1}''\oplus \mathbf{1}'\otimes V_1'',$ is an orthogonal decomposition with
respect to the non-degenerate symmetric bilinear form $\langle\cdot,\cdot\rangle$ on $V_1$;

\item [3)] $[V_1'\otimes \mathbf{1}'',\mathbf{1}'\otimes V_1'']=0$ with the Lie bracket $[\cdot,\cdot]$ on $V_1$;

\item [4)]  $\on{Sc}(V',\omega')\otimes \mbf{1}''$,  $ \mbf{1}'\otimes\on{Sc}(V'',\omega'')$, and $\on{Sc}(V',\omega')\otimes\mbf{1}''+\mbf{1}'\otimes\on{Sc}(V'',\omega'')$ are subsets of $\on{Sc}(V,\omega);$

\item [5)] For each $\widetilde{\omega}'\in \on{Sc}(V',\omega')$, we have
$$C_{V}(<\widetilde{\omega}'>\otimes \mbf{1}'')=C_{V'}(<\widetilde{\omega}'>)\otimes V''$$
and
$$C_{V}(C_{V}(<\widetilde{\omega}'>\otimes \mbf{1}''))=C_{V'}(C_{V'}(<\widetilde{\omega}'>))\otimes \mbf{1}''.$$
\end{itemize}
\end{lem}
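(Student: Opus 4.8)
The plan is to prove the five assertions essentially in the order listed, since each relies on the previous ones, and to reduce everything to elementary facts about tensor products of vertex operator algebras together with the already-established criterion for semi-conformality (Proposition~\ref{p2.2}).

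\textbf{Parts 1) and 2).} First I would observe that in a tensor product $(V,\omega)=(V',\omega')\otimes(V'',\omega'')$ one has $\omega=\omega'\otimes\mathbf{1}''+\mathbf{1}'\otimes\omega''$, so that $L^V(n)=L^{V'}(n)\otimes\operatorname{id}+\operatorname{id}\otimes L^{V''}(n)$ for all $n$; hence $L^V(n)$ restricted to $V'\otimes\mathbf{1}''$ agrees with $L^{V'}(n)$ for all $n\ge 0$, which is precisely semi-conformality by the $\N$-graded criterion cited after Proposition~\ref{p2.2} (note $\omega'\otimes\mathbf{1}''\in V_2\cap\ker L^V(1)$ since $L^{V'}(1)\omega'=0$). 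Because $V$ is simple CFT type and generated by $V_1$, and because the grading of the tensor product forces $V_0=V_0'\otimes V_0''=\C\mathbf{1}$, each factor $V_0'=\C\mathbf{1}'$, $V_0''=\C\mathbf{1}''$, so both factors are CFT type; moreover $V_1=V_1'\otimes\mathbf{1}''\oplus\mathbf{1}'\otimes V_1''$ from the tensor grading. Nondegeneracy of the form on each $V_i'$ follows from the fact that the invariant bilinear form on a tensor product is the tensor product of the forms, which is nondegenerate iff each factor's form is; equivalently, one can invoke Li's uniqueness result once we know each factor is strong CFT type, but it is cleaner just to use $\langle u'\otimes\mathbf{1}'',v'\otimes\mathbf{1}''\rangle=\langle u',v'\rangle$, which is immediate from $u_1v=\langle u,v\rangle\mathbf{1}$ and the tensor vertex operator formula. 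Orthogonality of the two summands in 2) is then the statement that $\langle u'\otimes\mathbf{1}'',\mathbf{1}'\otimes v''\rangle\mathbf{1}=(u'\otimes\mathbf{1}'')_1(\mathbf{1}'\otimes v'')=(u'_1\mathbf{1}')\otimes(\mathbf{1}''_{-1}v'')$, and $u'_1\mathbf{1}'=0$ since $u'_1$ lowers degree by $1$ on the $\N$-graded $V'$.

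\textbf{Part 3).} This is the commutativity of the two factors inside $V_1$ as a Lie algebra: $[u'\otimes\mathbf{1}'',\mathbf{1}'\otimes v'']=(u'\otimes\mathbf{1}'')_0(\mathbf{1}'\otimes v'')=(u'_0\mathbf{1}')\otimes(\mathbf{1}''_{-1}v'')+\text{(mixed term that vanishes by the tensor product }Y\text{-formula})$; since $u'_0$ also lowers degree, $u'_0\mathbf{1}'=0$, so the bracket is zero. Actually the cleanest route is to recall that in $V'\otimes V''$ the vertex operators $Y(a'\otimes\mathbf{1}'',z)$ and $Y(\mathbf{1}'\otimes b'',z)$ act on different tensor factors and hence mutually (local and) commuting, so all their modes commute, in particular the degree-zero modes; I would phrase 3) this way to avoid fiddling with the coproduct formula.

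\textbf{Parts 4) and 5).} For 4), given $\widetilde\omega'\in\on{Sc}(V',\omega')$, I would check the equations of Proposition~\ref{p2.2} for $\widetilde\omega'\otimes\mathbf{1}''\in V$: since $Y^V(\widetilde\omega'\otimes\mathbf{1}'',z)=Y^{V'}(\widetilde\omega',z)\otimes\operatorname{id}$, the operators $L'(n)$ for this vector on $V$ are $L^{V'}_{\widetilde\omega'}(n)\otimes\operatorname{id}$, and the required identities (e.g.\ $L'(n)(\widetilde\omega'\otimes\mathbf{1}'')=0$ for $n\ge 3$, etc.) follow by tensoring the corresponding identities in $V'$ with $\operatorname{id}$; comparison with $L^V(n)$ uses that the $V''$-part contributes nothing on weight-two vectors supported in the $V'$-factor, which needs the simple-CFT hypothesis to pin down weight-one and weight-zero spaces. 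The symmetric statement holds for $\mathbf{1}'\otimes\on{Sc}(V'',\omega'')$, and for the sum one uses that if $\omega_1\in\on{Sc}$ and $\omega_2\in\on{Sc}$ come from commuting sub-vertex-operator-algebras then $\omega_1+\omega_2\in\on{Sc}$ — here this follows because $\widetilde\omega'\otimes\mathbf{1}''$ and $\mathbf{1}'\otimes\widetilde\omega''$ generate commuting Virasoro subalgebras whose sum is again a semi-conformal (Virasoro) vector, which one verifies again directly through Proposition~\ref{p2.2}. For 5), I would argue: $C_V(\langle\widetilde\omega'\rangle\otimes\mathbf{1}'')$ consists of $w\in V$ with $(\widetilde\omega'\otimes\mathbf{1}'')_n w=0$ for $n\ge 0$, i.e.\ $(L^{V'}_{\widetilde\omega'}(n)\otimes\operatorname{id})w=0$ for $n\ge 1$ together with the $n=0$ ($L(-1)$-type) condition; writing $w=\sum w_i'\otimes w_i''$ in terms of a basis of $V''$, this forces each $w_i'\in C_{V'}(\langle\widetilde\omega'\rangle)$, giving $C_V(\langle\widetilde\omega'\rangle\otimes\mathbf{1}'')=C_{V'}(\langle\widetilde\omega'\rangle)\otimes V''$. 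Applying the commutant once more, $C_V$ of $C_{V'}(\langle\widetilde\omega'\rangle)\otimes V''$: an element killed by all nonnegative modes of everything in $C_{V'}(\langle\widetilde\omega'\rangle)\otimes\mathbf{1}''$ and of $\mathbf{1}'\otimes V''$ lies in $C_{V'}(C_{V'}(\langle\widetilde\omega'\rangle))\otimes C_{V''}(V'')$, and $C_{V''}(V'')=\C\mathbf{1}''$ because $V''$ is simple CFT type; this yields the second displayed identity in 5).

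\textbf{Main obstacle.} The routine-looking but genuinely delicate point is controlling commutants in a tensor product — the identity $C_{V'\otimes V''}(A\otimes\mathbf{1}'')=C_{V'}(A)\otimes V''$ and especially $C_{V'\otimes V''}(B'\otimes B'')=C_{V'}(B')\otimes C_{V''}(B'')$, which needs the simple-CFT-type hypothesis to guarantee $C_{V''}(V'')=\C\mathbf{1}''$ and a basis/flatness argument to separate the tensor factors. I expect this to be where the real work lies; everything else is bookkeeping with the tensor product vertex operator formula and Proposition~\ref{p2.2}.
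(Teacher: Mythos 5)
Your argument is correct, and for parts 1)--4) it runs along essentially the same lines as the paper's proof (which treats 1) and 4) as routine verifications and proves 2) and 3) by the same mode computations you give). The only genuine divergence is in the second identity of 5). The paper identifies $C_{V}(C_{V}(\langle\widetilde{\omega}'\rangle\otimes \mbf{1}''))$ with $\on{Ker}_{V}(L(-1)-\widetilde{L}'(-1)\otimes \on{Id}'')$ via \cite[Corollary 3.11.11]{LL} and then runs an explicit bi-degree argument on $V'\otimes V''$: the operators $(L'(-1)-\widetilde{L}'(-1))\otimes\on{Id}''$ and $\on{Id}'\otimes L''(-1)$ have bi-degrees $(1,0)$ and $(0,1)$, so each must kill a kernel element separately, and injectivity of $L''(-1)$ on $V''_n$ for $n>0$ (simple CFT type) forces the $V''$-components into $\C\mbf{1}''$. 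You instead iterate the commutant-of-a-tensor-factor identity, computing $C_{V}\bigl(C_{V'}(\langle\widetilde{\omega}'\rangle)\otimes V''\bigr)=C_{V'}\bigl(C_{V'}(\langle\widetilde{\omega}'\rangle)\bigr)\otimes C_{V''}(V'')$ and finishing with $C_{V''}(V'')=\C\mbf{1}''$. Both routes rest on the same underlying fact ($\on{Ker}L''(-1)=\C\mbf{1}''$ for simple CFT type), but yours is more modular: the general separation principle $C_{V'\otimes V''}(B'\otimes B'')=C_{V'}(B')\otimes C_{V''}(B'')$, for vertex subalgebras containing the respective vacua, packages the bi-degree/basis argument once and for all, whereas the paper redoes a closely related computation by hand again in the proof of Lemma~\ref{lem5.2}. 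Two small points to repair: in 3), $u'_0$ for $u'\in V'_1$ \emph{preserves} degree rather than lowering it, so the vanishing $u'_0\mbf{1}'=0$ should be attributed to the creation axiom $Y(u',z)\mbf{1}'=e^{zL'(-1)}u'$ (the paper instead deduces it from skew-symmetry of the bracket); your alternative route via commuting modes on distinct tensor factors is also fine. And since ``nondegenerate simple CFT type'' includes simplicity, you should note that a proper nonzero ideal $I'\subsetneq V'$ would yield the proper nonzero ideal $I'\otimes V''$ of $V$, so each factor is simple.
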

\begin{proof}  The first assertion is straight forward to verify using properties of tensor product vertex operator algebras. 

For $u'\in V_1', v''\in V_1''$, we have
$$ (u'\otimes \mathbf{1}'')_1(\mathbf{1}'\otimes v'')=u'_1\mathbf{1}'\otimes v''=\langle u'\otimes \mathbf{1}'',\mathbf{1}'\otimes v''\rangle=0.$$
So  $V_1'\otimes \mathbf{1}''$ is orthogonal to  $\mathbf{1}'\otimes V_1''$ in $V_1$. Thus the second assertion  holds.

For $u'\otimes \mathbf{1}''\in V_1'\otimes \mathbf{1}'', \mathbf{1}'\otimes v''\in \mathbf{1}''\otimes V_1''$,
we have $$[u'\otimes \mathbf{1}'',\mathbf{1}'\otimes v'']= (u'\otimes \mathbf{1}'')_0(\mathbf{1}'\otimes v'')=u'_0\mathbf{1}'\otimes v''=a \mathbf{1}'\otimes v''.$$ 
Here $a\in \C$. Similarly,
$[\mathbf{1}'\otimes v'',u'\otimes \mathbf{1}'']=b u'\otimes \mathbf{1}''$ for some $b\in \C$.
The skew symmetric property of the Lie bracket implies $b u'\otimes \mathbf{1}''=-a\cdot\mathbf{1}'\otimes v''$. Thus $a=b=0$ and 
 $[u'\otimes \mathbf{1}'',\mathbf{1}'\otimes v'']=0$.
Hence $[V_1'\otimes \mathbf{1}'',\mathbf{1}'\otimes V_1'']=0.$ So the third assertion holds.

According to the definition of semi-conformal vectors, obviously, we have the fourth assertion.

For $\widetilde{\omega}'\in  \on{Sc}(V',\omega')$, from  \cite[ Corollary 3.11.11]{LL}, we have
$$
C_{V}(<\widetilde{\omega}'>\otimes \mbf{1}'')=\on{Ker}_{V'\otimes V''}(\widetilde{L}'(-1)\otimes \on{Id}'')=\on{Ker}_{V'}\widetilde{L}'(-1)\otimes V''.
$$
Since $\on{Ker}_{V'}\widetilde{L}'(-1)=C_{V'}(<\widetilde{\omega}'>)$, we get
$C_{V}(<\widetilde{\omega}'>\otimes \mbf{1}'')=C_{V'}(<\widetilde{\omega}'>)\otimes V''$.

Since $ \omega=\omega'\otimes \mbf{1}''+\mbf{1}'\otimes \omega''$, we have $<\widetilde{\omega}'>\otimes \mbf{1}''=<\widetilde{\omega}'\otimes \mbf{1}''>$. By  \cite[Corollary 3.11.11]{LL}, we have
$$C_{V}(C_{V}(<\widetilde{\omega}'>\otimes \mbf{1}''))=\on{Ker}_{V}(L(-1)-\widetilde{L}'(-1)\otimes \on{Id}'')=\on{Ker}(\on{Id}'\otimes L''(-1));$$
$$C_{V'}(C_{V'}(<\widetilde{\omega}'>))=\on{Ker}_{V'}(L'(-1)-\widetilde{L}'(-1)).$$
Since $L(-1)=L(-1)'\otimes\on{Id}''+\on{Id}'\otimes L''(-1)$, we have $ L(-1)=L(-1)'\otimes\on{Id}'' $ on $V'\otimes \mbf{1}''$ and  $$C_{V'}(C_{V'}(<\widetilde{\omega}'>))\otimes \mbf{1}''\subset C_{V}(C_{V}(<\widetilde{\omega}'\otimes \mbf{1}''>)).$$

Let $\eta=\sum_{i=1}^{t}\alpha_i\otimes \beta_i\in \on{Ker}_{V}(L(-1)-\widetilde{L}'(-1)\otimes \on{Id}'')$. Here
$\alpha_1,\cdots,\alpha_t$ are linearly independent homogeneous vectors 
in $V'$ and $\beta_1,\cdots,\beta_t$ are  homogeneous non-zero vectors in $V''$.  
We will use the fact that $ V_{n}=\oplus _{l\in \N}V'_{l}\otimes V''_{n-l}$, i.e.,  
$V$ is a $\Z\times \Z$-graded vector space and the operators $L'(-1)\otimes \on{Id}''$ and $\on{Id}'\otimes L''(-1) $ have bi-degrees $(1,0)$ and $(0,1)$ respectively. 

If $(L(-1)-\widetilde{L}'(-1)\otimes \on{Id}'')(\eta)=0$,~i,e.,
$L(-1)(\sum_{i=1}^{t}\alpha_i\otimes \beta_i)=\sum_{i=1}^{t}(\widetilde{L}'(-1)\alpha_i)\otimes \beta_i$,
then $$\sum_{i=1}^{t}L'(-1)\alpha_i\otimes \beta_i+\sum_{i=1}^{t}\alpha_i\otimes L''(-1)\beta_i=\sum_{i=1}^{t}\widetilde{L}'(-1)\alpha_i\otimes \beta_i \; \text{or equivalently }$$
$$\sum_{i=1}^{t}(L'(-1)-\tilde{L}'(-1))(\alpha_i)\otimes \beta_i=-\sum_{i=1}^{t}\alpha_i\otimes L''(-1)\beta_i.$$
We now claim that the only possible non-zero $\beta_i$ are in $ \C\mbf{1}''$. 
Since $V''$ is simple CFT type, $L''(-1)$ is injective on all $V''_n$ except 
$ n=0$ (see \cite{DLM1}). Suppose that 
$\{\beta_{i_1}, \cdots, \beta_{i_r}\}$ is the subset $\{\beta_1, \cdots, \beta_t\}$ 
of the same highest weight $n$. Then comparing both sides of 
the $(\Z, n+1)$ homogeneous components, we get $ \sum_{j=1}^r\alpha_{i_j}\otimes L''(-1)\beta_{i_j}=0$. Since $\alpha_i$'s are linearly independent, we have $L''(-1)\beta_{i_j}=0$. If $ n>0$, then $ \beta_{i_j}=0$. But we have assumed that $ \beta_i$'s are non-zero.  Hence $\beta_i \in \C\mbf{1}''$ and $ \eta=\alpha\otimes \mbf{1}'' \in V'\otimes \mbf{1}''$. Furthermore, $(L'(-1)-\tilde{L}'(-1))\alpha=0$, i.e., $ \alpha \in C_{V'}(C_{V'}(<\widetilde{\omega}'>)$.
Therefore, we have
$C_{V}(C_{V}(<\widetilde{\omega}'>)\subset C_{V'}(C_{V'}(<\widetilde{\omega}'>)\otimes \mbf{1}''$.
Thus, we get $C_{V}(C_{V}(<\widetilde{\omega}'>)= C_{V'}(C_{V'}(<\widetilde{\omega}'>)$. 
\end{proof}

\begin{lem} \label{lem5.2}
 Let $(V,\omega)$ be a nondegenerate simple CFT type vertex operator algebra generated by $V_1$.  If $V=V'\otimes V''$ and $V$ satisfies
\begin{equation}\label{f6.1}
\forall \tilde{\omega}\in \on{Sc}(V,\omega), V\simeq C_{V}(C_{V}(<\tilde{\omega}>))\otimes C_{V}(<\tilde{\omega}>),
\end{equation}
then $V',V''$ also satisfy (\ref{f6.1}).
Conversely, if $V',V''$ satisfy (\ref{f6.1}) and $\on{Sc}(V,\omega)=\on{Sc}(V',\omega')+\on{Sc}(V'',\omega'')$,
then $V$ also satisfies \eqref{f6.1}.
\end{lem}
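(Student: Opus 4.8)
The plan is to reduce everything to Lemma~\ref{lem5.1}, which already tells us how commutants and double commutants behave under the tensor decomposition $V=V'\otimes V''$. For the forward direction, I would start with an arbitrary $\widetilde{\omega}'\in\on{Sc}(V',\omega')$ and consider $\widetilde{\omega}'\otimes\mbf{1}''\in\on{Sc}(V,\omega)$ (this membership is part~4) of Lemma~\ref{lem5.1}). Applying hypothesis \eqref{f6.1} to $\widetilde{\omega}=\widetilde{\omega}'\otimes\mbf{1}''$ gives $V\cong C_V(C_V(<\widetilde{\omega}'\otimes\mbf{1}''>))\otimes C_V(<\widetilde{\omega}'\otimes\mbf{1}''>)$; now invoke part~5) of Lemma~\ref{lem5.1}, which identifies these two factors as $C_{V'}(C_{V'}(<\widetilde{\omega}'>))\otimes\mbf{1}''$ and $C_{V'}(C_{V'}(<\widetilde{\omega}'>))$, er, more precisely $C_{V'}(<\widetilde{\omega}'>)\otimes V''$. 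Since $V=V'\otimes V''$, cancelling the $V''$ factor (which is legitimate for tensor factorizations of these graded vertex operator algebras, because the graded dimensions multiply) yields $V'\cong C_{V'}(C_{V'}(<\widetilde{\omega}'>))\otimes C_{V'}(<\widetilde{\omega}'>)$, which is \eqref{f6.1} for $V'$. The argument for $V''$ is symmetric.

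For the converse, suppose $V'$ and $V''$ both satisfy \eqref{f6.1} and $\on{Sc}(V,\omega)=\on{Sc}(V',\omega')+\on{Sc}(V'',\omega'')$. Take any $\widetilde{\omega}\in\on{Sc}(V,\omega)$ and write $\widetilde{\omega}=\widetilde{\omega}'\otimes\mbf{1}''+\mbf{1}'\otimes\widetilde{\omega}''$ with $\widetilde{\omega}'\in\on{Sc}(V',\omega')$ and $\widetilde{\omega}''\in\on{Sc}(V'',\omega'')$. The key point is that $<\widetilde{\omega}>\subseteq <\widetilde{\omega}'>\otimes\mbf{1}''\,\vee\,\mbf{1}'\otimes<\widetilde{\omega}''>$, and that the commutant of this "external tensor product" of Virasoro subalgebras splits as a tensor product: by the computation in Lemma~\ref{lem5.1}(5) applied to each factor, one gets $C_V(<\widetilde{\omega}>)=C_{V'}(<\widetilde{\omega}'>)\otimes C_{V''}(<\widetilde{\omega}''>)$ and correspondingly $C_V(C_V(<\widetilde{\omega}>))=C_{V'}(C_{V'}(<\widetilde{\omega}'>))\otimes C_{V''}(C_{V''}(<\widetilde{\omega}''>))$. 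Then
\[
C_V(C_V(<\widetilde{\omega}>))\otimes C_V(<\widetilde{\omega}>)
\cong\bigl(C_{V'}(C_{V'}(<\widetilde{\omega}'>))\otimes C_{V'}(<\widetilde{\omega}'>)\bigr)\otimes\bigl(C_{V''}(C_{V''}(<\widetilde{\omega}''>))\otimes C_{V''}(<\widetilde{\omega}''>)\bigr),
\]
and applying \eqref{f6.1} in each of $V'$ and $V''$ turns the two bracketed factors into $V'$ and $V''$, whose tensor product is $V$. This gives \eqref{f6.1} for $V$.

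The main obstacle I anticipate is justifying the "cancellation" of a tensor factor and, more importantly, the claim that the commutant of $<\widetilde{\omega}'>\otimes\mbf{1}''+\mbf{1}'\otimes<\widetilde{\omega}''>$ in $V'\otimes V''$ decomposes as the tensor product of the commutants—Lemma~\ref{lem5.1}(5) only handles the pure cases $\widetilde{\omega}''=0$ (resp. $\widetilde{\omega}'=0$), so I would need to compose the two: $C_V(<\widetilde{\omega}>)=C_V(<\widetilde{\omega}'\otimes\mbf{1}''>)\cap C_V(<\mbf{1}'\otimes\widetilde{\omega}''>)$, then apply part~5) to each intersectand and check that the resulting intersection $\bigl(C_{V'}(<\widetilde{\omega}'>)\otimes V''\bigr)\cap\bigl(V'\otimes C_{V''}(<\widetilde{\omega}''>)\bigr)$ collapses to $C_{V'}(<\widetilde{\omega}'>)\otimes C_{V''}(<\widetilde{\omega}''>)$, which follows from the $\Z\times\Z$-grading argument already used in the proof of Lemma~\ref{lem5.1}. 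For the cancellation, I would use that a tensor factorization $V\cong A\otimes B$ with $A\subseteq V'$, $B\supseteq V''$ forces $A=V'$ and $B=V''$ by comparing graded dimensions, since $V=V'\otimes V''$ and everything has finite-dimensional graded pieces bounded below.
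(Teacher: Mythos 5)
Your proposal is correct and follows essentially the same route as the paper's proof: the forward direction uses Lemma~\ref{lem5.1}(5) applied to $\widetilde{\omega}'\otimes\mbf{1}''$ followed by cancellation of the $V''$ tensor factor, and the converse rests on the identities $C_V(<\widetilde{\omega}>)=C_{V'}(<\widetilde{\omega}'>)\otimes C_{V''}(<\widetilde{\omega}''>)$ and $C_V(C_V(<\widetilde{\omega}>))=C_{V'}(C_{V'}(<\widetilde{\omega}'>))\otimes C_{V''}(C_{V''}(<\widetilde{\omega}''>))$, proved by the $\Z\times\Z$-bidegree argument, exactly as in the paper. The only small correction: the real content of that grading argument is the inclusion $C_V(<\widetilde{\omega}>)\subseteq C_V(<\widetilde{\omega}'\otimes\mbf{1}''>)\cap C_V(<\mbf{1}'\otimes\widetilde{\omega}''>)$, i.e.\ $\on{Ker}(\widetilde{L}'(-1)+\widetilde{L}''(-1))=\on{Ker}\,\widetilde{L}'(-1)\cap\on{Ker}\,\widetilde{L}''(-1)$, rather than the final intersection collapse (which is pure linear algebra) — but you have identified all the needed pieces.
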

\begin{proof}
Let $\omega',\omega''$ be the conformal vectros of $V'$ and $V''$, respectively. Since $V=V'\otimes V''$, then $\omega=\omega'\otimes \mbf{1}''+\mbf{1}'\otimes \omega''$ and $\omega',\omega''\in \on{Sc}(V,\omega)$. For more notational simplicity, we will omit the tensor factor $\otimes \mbf{1}''$ or $\mbf{1}'\otimes $ in the following argument. For $\forall \widetilde{\omega'}\in \on{Sc}(V',\omega')$, using 5) of Lemma \ref{l6.1},
we have
$$V=C_{V}(C_{V}(<\widetilde{\omega}'>))\otimes C_{V}(<\widetilde{\omega'}>)=C_{V'}(C_{V'}(<\widetilde{\omega'}>)
\otimes C_{V'}(<\widetilde{\omega'}>)\otimes V''.$$
This implies $V'=C_{V'}(C_{V'}(<\widetilde{\omega'}>))
\otimes C_{V'}(<\widetilde{\omega'}>)$. The same argument shows that
$V''=C_{V'}(C_{V''}(<\widetilde{\omega}''>))
\otimes C_{V''}(<\widetilde{\omega}''>)$ for $ \widetilde{\omega}''\in \on{Sc}(V'',\omega'')$.

Conversely, assume $V=V'\otimes V''$, $\on{Sc}(V,\omega)=\on{Sc}(V',\omega')+\on{Sc}(V'',\omega'')$, and
for each $\widetilde{\omega}'\in \on{Sc}(V',\omega'), \widetilde{\omega}''\in \on{Sc}(V'',\omega'')$, there are 
\begin{equation}\label{f6.2}
V'=C_{V'}(C_{V'}(<\widetilde{\omega'}>))\otimes C_{V'}(<\widetilde{\omega'}>);
\end{equation}
\begin{equation}\label{f6.3}
V''=C_{V'}(C_{V''}(<\widetilde{\omega}''>))
\otimes C_{V''}(<\widetilde{\omega}''>).
\end{equation}
Then we shall show that
\begin{equation}\label{f6.4}
V=C_{V}(C_{V}(<\widetilde{\omega}>))
\otimes C_{V}(<\widetilde{\omega}>)
\end{equation}
for any  $\widetilde{\omega}=\widetilde{\omega}'+\widetilde{\omega}''\in \on{Sc}(V,\omega)$.
Since (\ref{f6.2}), (\ref{f6.3}) hold, then we have
$$V=
C_{V'}(C_{V'}(<\widetilde{\omega}'>)\otimes C_{V'}(<\widetilde{\omega}'>)\otimes C_{V''}(C_{V''}(<\widetilde{\omega}''>))
\otimes C_{V''}(<\widetilde{\omega}''>).$$

We first claim that $C_{V'}(<\widetilde{\omega}'>)\otimes C_{V''}(<\widetilde{\omega}''>)=C_{V}(<\widetilde{\omega}>).$

In fact, $\forall \alpha\otimes \beta\in C_{V'}(<\widetilde{\omega'}>)\otimes C_{V''}(<\widetilde{\omega}''>)$
for $\alpha\in C_{V'}(<\widetilde{\omega}'>)$ and $\beta\in C_{V''}(<\widetilde{\omega}''>)$, we have
$$\widetilde{L}(-1)(\alpha\otimes \beta)=(\widetilde{L}'(-1)+\widetilde{L}''(-1))(\alpha\otimes \beta)=\widetilde{L}'(-1)\alpha\otimes \beta+\alpha\otimes
\widetilde{L}''(-1)\beta=0.$$ So  $C_{V'}(<\widetilde{\omega}'>)\otimes C_{V''}(<\widetilde{\omega}''>)\subset C_{V}(<\widetilde{\omega}>).$

Similar to the proof of Lemma~\ref{l6.1} 5), we set  $\eta=\sum_{i=1}^{t}\alpha_i\otimes \beta_i\in C_{V}(<\widetilde{\omega}>)$, where
$\alpha_1,\cdots,\alpha_t$ are homogeneous  nonzero vectors in $V'$ and $\beta_1,\cdots,\beta_t$ are homogeneous non-zero vectors in $V''$. Then
\begin{equation}\label{eq5.5} (\widetilde{L}'(-1)+\widetilde{L}''(-1))(\sum_{i=1}^{t}\alpha_i\otimes \beta_i)=
\sum_{i=1}^{t}\widetilde{L}'(-1)\alpha_i\otimes \beta_i+\sum_{i=1}^{t}\alpha_i\otimes \widetilde{L}''(-1)
\beta_i=0.
\end{equation}
Unlike the proof of  Lemma~\ref{l6.1} 5), one cannot use the injectivity of $\tilde{L}'(-1) $ on $V'_n$ with $n>0$ (and similar property of $L''(-1)$).  
We still use the $\Z\times \Z$-gradation on $V'\otimes V''$.  
We first assume that there exists $n, m \geq 0 $ such that $\on{wt}(\alpha_i)=m$ and 
$\on{wt}(\beta_i)=n$ for all $i=1, \cdots, t$. Then the first summation in \eqref{eq5.5} has degree $(m+1, n)$ while the second   summation in  \eqref{eq5.5} has degree $(m, n+1)$. Thus we have 
$$ \sum_{i=1}^{t}\widetilde{L}'(-1)\alpha_i\otimes \beta_i=0 \quad \text{ and} \quad\sum_{i=1}^{t}\alpha_i\otimes \widetilde{L}''(-1)
\beta_i =0.$$
By assuming that $\alpha_i$'s are linearly independent (without changing $\eta$), we get that $ \tilde{L}''(-1)\beta_i=0$ and thus $\eta= \sum_{i=}^t \alpha_i\otimes \beta_i \in V'_m\otimes \on{Ker}(\tilde{L}''(-1): V''_n\rightarrow V''_{n+1})$. Similarly, we can take $\{ \beta_1, \cdots, \beta_t\}$ to be linearly independent (still without changing $\eta$), we can get $\tilde{L}'(-1)\alpha_i=0$ for all $ i$ and thus $ \eta \in \on{Ker}(\tilde{L}'(-1): V'_m\rightarrow V'_{m+1})\otimes V''_{n}$.  

Since $ (V'_{m+1}\otimes V''_{n})\oplus (V'_{n}\otimes V''_{m+1}) \subseteq (V'\otimes V'')_{m+n+1}$ is a direct sum, and a standard linear algebra argument shows that 
$$(\on{Ker}(\tilde{L}'(-1)))_{m}\otimes (\on{Ker}(\tilde{L}''(-1)))_n=(\on{Ker}(\tilde{L}'(-1)))_{m}\otimes V''_n)\cap (V'_{m}\otimes (\on{Ker}(\tilde{L}''(-1)))_{n}). $$
Hence $ \eta\in  (\on{Ker}(\tilde{L}'(-1)))_{m}\otimes (\on{Ker}(\tilde{L}''(-1)))_n$. 

For general $ \eta$, let $(m,n)$ be a maximal element of 
$\{ (\on{wt}\alpha_i, \on{wt}{\beta_i})\;|\; i=1, \cdots, t\}$ with 
respect to the partial order: $(m,n)\geq (k,l) $ iff $ m\geq k$ and 
$ n\geq l$. Let $\eta_{m,n}$ be the $(m,n)$ homogeneous component 
of $ \eta$.   Comparing the bi-degrees in \eqref{eq5.5}, we see that 
$\tilde{L}(-1)\eta=0$ implies that $\tilde{L}(-1)(\eta_{m,n})=0$. 
The special case we discussed above shows that $$ \eta_{m,n} \in  (\on{Ker}(\tilde{L}'(-1)))_{m}\otimes (\on{Ker}(\tilde{L}''(-1)))_n\subseteq C_{V'}(<\widetilde{\omega}'>)\otimes C_{V''}(<\widetilde{\omega}''>).$$

Now one replaces $\eta$ by $ \eta-\eta_{m,n}$ and recursively show that 
$ \eta \in C_{V'}(<\widetilde{\omega}'>)\otimes C_{V''}(<\widetilde{\omega}''>)$. Therefore, we have just proved $ C_V(<\tilde{\omega}>)\subseteq C_{V'}(<\widetilde{\omega}'>)\otimes C_{V''}(<\widetilde{\omega}''>)$. 

Thus, we have just proved the claim $ C_V(<\tilde{\omega}>)= C_{V'}(<\widetilde{\omega}'>)\otimes C_{V''}(<\widetilde{\omega}''>)$.

By considering $ \omega-\tilde{\omega}$ and using the fact the $C_V(C_V(<\omega'>))=C_{V}(<\omega-\omega'>)$,  we can get 
$$
C_{V'}(C_{V'}(<\widetilde{\omega}'>))\otimes C_{V''}(C_{V''}(<\widetilde{\omega}''>))=C_{V}(C_{V}(<\widetilde{\omega}>)).$$
Now by substituting these in the formula \eqref{f6.4} and using the assumptions \eqref{f6.2} and \eqref{f6.3}, we can show that the formula (\ref{f6.4}) holds.
\end{proof}

\begin{lem}\label{lem5.3}
If $(V,\omega)$ is a nondegenerate simple CFT type vertex operator algebra generated by $V_1$ satisfying (\ref{f6.1}) and  $\widetilde{\omega} \in \on{Sc}(V,\omega)$ is neither $0$ nor $\omega$, then
$$\on{dim} C_{V}(C_{V}(<\widetilde{\omega}>))_1>0\; \;\text{and}\;\; \on{dim} C_{V}(<\widetilde{\omega}>)_1>0.$$
\end{lem}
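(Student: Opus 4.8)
The plan is to read the isomorphism \eqref{f6.1} as an internal conformal tensor decomposition and then exploit that $V$ is generated by $V_1$. First I would set $V'=C_{V}(C_{V}(\langle\widetilde\omega\rangle))$ and $V''=C_{V}(\langle\widetilde\omega\rangle)$. By the remark preceding Lemma~\ref{lem5.1}, $V'\otimes V''\subseteq V$ is already a conformal vertex operator subalgebra, so \eqref{f6.1} (applied to our $\widetilde\omega$) says this inclusion is an equality and hence $(V,\omega)=(V',\omega')\otimes(V'',\omega'')$ as vertex operator algebras. Here, exactly as in the proof of Proposition~\ref{p2.6}, $\omega''=\omega-\widetilde\omega$ is the conformal vector of the commutant $C_{V}(\langle\widetilde\omega\rangle)$, and $\omega'=\widetilde\omega$ is the conformal vector of its double commutant $C_{V}(C_{V}(\langle\widetilde\omega\rangle))$, the maximal conformal extension of $\langle\widetilde\omega\rangle$ in $V$.

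Next I would invoke Lemma~\ref{lem5.1}(1), which gives that both $(V',\omega')$ and $(V'',\omega'')$ are again nondegenerate simple CFT type; in particular $V'_{0}=\C\mathbf{1}'$ and $V''_{0}=\C\mathbf{1}''$, so that Lemma~\ref{lem5.1}(2) yields $V_{1}=(V'_{1}\otimes\mathbf{1}'')\oplus(\mathbf{1}'\otimes V''_{1})$. Now suppose, for contradiction, that $\dim V''_{1}=0$. Then $V_{1}=V'_{1}\otimes\mathbf{1}''$ is contained in the vertex subalgebra $V'\otimes\mathbf{1}''$ of $V$, and since $V$ is generated by $V_{1}$ this forces $V=V'\otimes\mathbf{1}''$, i.e.\ $V''=\C\mathbf{1}''$; but then $V''_{2}=0$, so the conformal vector $\omega''=\omega-\widetilde\omega$ of $V''$ vanishes, contradicting $\widetilde\omega\neq\omega$. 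Symmetrically, if $\dim V'_{1}=0$ then $V=\mathbf{1}'\otimes V''$, whence $V'=\C\mathbf{1}'$, $V'_{2}=0$, and the conformal vector $\omega'=\widetilde\omega$ of $V'$ vanishes, contradicting $\widetilde\omega\neq 0$. Therefore $\dim C_{V}(C_{V}(\langle\widetilde\omega\rangle))_{1}=\dim V'_{1}>0$ and $\dim C_{V}(\langle\widetilde\omega\rangle)_{1}=\dim V''_{1}>0$.

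I do not expect a genuine obstacle here: once \eqref{f6.1} is interpreted as an internal tensor decomposition, the statement is a formal consequence of Lemma~\ref{lem5.1} combined with the generation hypothesis. The only points demanding care are the bookkeeping of which of $\widetilde\omega$ and $\omega-\widetilde\omega$ sits on which tensor factor (so that $\widetilde\omega\neq 0$ and $\widetilde\omega\neq\omega$ are used in the correct places), and the elementary observation that a vertex operator subalgebra reduced to $\C\mathbf{1}$ has conformal vector $0$.
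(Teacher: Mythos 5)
Your proof is correct and follows essentially the same route as the paper's: both read \eqref{f6.1} as an internal tensor decomposition giving $\dim V_1=\dim C_{V}(C_{V}(<\widetilde\omega>))_1+\dim C_{V}(<\widetilde\omega>)_1$, and both use generation by $V_1$ to show that if one weight-one factor vanished the whole algebra would collapse onto the other factor, forcing the corresponding conformal vector ($\widetilde\omega$ or $\omega-\widetilde\omega$) to be zero. The only cosmetic difference is that you reach the contradiction via $V''=\C\mathbf{1}''\Rightarrow V''_2=0\Rightarrow\omega''=0$, while the paper phrases it as $C_V(<\widetilde\omega>)\neq V$; the substance is identical.
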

\begin{proof}
Since  the vertex operator algebra $(V,\omega)$ satisfies (\ref{f6.1}), we have
$$\on{dim} C_{V}(C_{V}(<\widetilde{\omega}>))_1+\on{dim} C_{V}(<\widetilde{\omega}>)_1=\on{dim} V_1.$$
Note that $\widetilde{\omega} \in \on{Sc}(V,\omega)$ is  not zero, then $ C_V(<\tilde{\omega}>)\neq V$. Hence $ C_V(<\tilde{\omega}>)_1\neq V_1$. And since $V$ is generated by $V_1$, then $ C_V(C_V(<\tilde{\omega}>))_1 \neq 0$.  Since $ \tilde{\omega}\neq \omega$, then $ \omega-\tilde{\omega}\neq 0$. Thus   $\on{dim} C_{V}(C_{V}(<\widetilde{\omega}>))_1=\on{dim} C_{V}(<\omega-\widetilde{\omega}>)_1> 0 $. So $\on{dim} C_{V}(C_{V}(<\widetilde{\omega}>))_1>0$ and $\on{dim} C_{V}(<\widetilde{\omega}>)_1>0$.
\end{proof}
\begin{prop} \label{p5.4}
Let $(V,\omega)$ be a nondegenerate simple CFT type vertex operator algebra generated by $V_1$.
If $h\in V_1$ such that $ \langle h, h\rangle =1 $ and $ L(1)h=0$, then the vertex subalgebra $<h>$ of $V$ is isomorphic to $ V_{\hat{\h}}(1,0)$ and the pair $(<h>, \omega')$ with $\omega'=\frac{1}{2}h_{-1}h_{-1}\mbf{1}\in V_2$, is a semi-conformal subalgebra of $(V, \omega)$.  In particular,  if $\on{dim}V_1=1$ such that $ L(1)V_1=0$, then $(V,\omega)\cong (V_{\hat{\h}}(1, 0),\omega_\Lambda)$ with $ \Lambda=0$.
\end{prop}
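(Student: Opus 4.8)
The plan is to produce the isomorphism by building an explicit vertex algebra homomorphism out of the free field $h$ and then checking it intertwines conformal vectors. First I would recall the standard construction: since $\langle h, h\rangle = 1$, the modes $h_n = h_{-1+n}$ (in the $u_n$ convention, $Y(h,z)=\sum_n h_n z^{-n-1}$) acting on $V$ satisfy the Heisenberg commutation relations $[h_m, h_n] = m\,\delta_{m+n,0}\langle h,h\rangle\,\mathrm{Id}_V = m\,\delta_{m+n,0}\,\mathrm{Id}_V$; this is a direct consequence of the commutator formula for vertex operators applied to $h_1 h = \langle h,h\rangle\mathbf 1 = \mathbf 1$, together with $h_n h = 0$ for $n\ge 2$ (which follows from $h\in V_1$ and $V_0 = \C\mathbf 1$, $V_{<0}=0$). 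Moreover $L(-1)h = h_{-2}\mathbf 1$ and one checks $h_n\mathbf 1 = 0$ for $n\ge 0$, so $\C[h_{-1},h_{-2},\dots]\mathbf 1$ carries an action of the Heisenberg Lie algebra $\widehat{\C h}$ at level $1$ with $\mathbf 1$ a highest weight vector of weight $0$. By the universal property of $V_{\widehat{\h}}(1,0)$ (with $\h=\C h$) this yields a vertex algebra homomorphism $\varphi: V_{\widehat{\h}}(1,0)\to V$ with image $<h>$, and since $V_{\widehat{\h}}(1,0)$ is a simple vertex algebra and $\varphi\ne 0$, $\varphi$ is injective; hence $<h>\cong V_{\widehat{\h}}(1,0)$.

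Next I would identify the conformal vector. Under $\varphi$ the element $\omega' = \tfrac12 h_{-1}h_{-1}\mathbf 1 \in V_2$ is the image of the standard Segal--Sugawara vector $\tfrac12 h(-1)^2\mathbf 1 = \omega_{\Lambda}$ with $\Lambda = 0$ in $V_{\widehat{\h}}(1,0)$. To see that $(<h>,\omega')$ is a semi-conformal subalgebra of $(V,\omega)$ it suffices, by Proposition~\ref{p2.2}, to verify the equations \eqref{2.1} with $L'(n) = (\omega')_{n+1}$; equivalently, since $V$ is $\N$-graded simple CFT type, by \cite[Theorem 3.11.12]{LL} it is enough to check $\omega'\in V_2\cap\ker L(1)$. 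That $\omega'$ is a Virasoro element with central charge $1$ on $V$ is the usual free-field computation using the Heisenberg relations above, so $(\omega')_2\omega' = \tfrac12\mathbf 1$ and $(\omega')_n\omega' = 0$ for $n\ge 3$; the remaining point is $L(1)\omega' = \omega_2\,\omega'$. Here I would use $L(1)h = 0$: writing $L(1) = \omega_2$ and using the $L(1)$-derivation property together with $L(1)h=0$ and $L(0)h = h$, one gets $L(1)(h_{-1}h_{-1}\mathbf 1) = $ a combination of $h$-modes applied to $h$ and $\mathbf 1$ that vanishes. So $\omega'$ is a semi-conformal vector of $(V,\omega)$.

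Finally, for the ``in particular'' statement: if $\dim V_1 = 1$ and $L(1)V_1=0$, pick $h\in V_1$ with $\langle h,h\rangle = 1$ (possible since the form on $V_1$ is nondegenerate and $\C$ is algebraically closed). Then $V_1 = \C h\subseteq <h>$, and since $V$ is generated by $V_1$ we get $V = <h> \cong V_{\widehat{\h}}(1,0)$ as vertex algebras, with $\h = V_1$. It remains to match the conformal vectors, i.e.\ to show $\omega = \omega'$ under this identification. Both $\omega$ and $\omega'$ lie in $V_2$ and are semi-conformal vectors of $(V,\omega)$ — $\omega$ trivially, $\omega'$ by the previous paragraph — with $\omega'\preceq \omega$ and $C_V(C_V(<\omega'>)) \supseteq <h> = V$; hence the maximal conformal extension of $(<h>,\omega')$ in $V$ is all of $V$, forcing $\omega' = \omega$ by \cite[Lemma 5.1]{JL2} (a vertex subalgebra of $V$ carries at most one semi-conformal structure). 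Thus $(V,\omega)\cong (V_{\widehat{\h}}(1,0),\omega_\Lambda)$ with $\h = V_1$ and $\Lambda = 0$. The main obstacle I anticipate is not any single hard step but the bookkeeping in the two free-field verifications — that $\varphi$ is well defined on the induced module (i.e.\ the relations in $V_{\widehat{\h}}(1,0)$ are respected) and that $\omega'$ satisfies all of \eqref{2.1} on $V$; both are routine given the Heisenberg relations and $L(1)h = 0$, but must be done carefully with the correct mode conventions.
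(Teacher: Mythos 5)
Your proposal is correct and follows essentially the same route as the paper's proof: derive the rank-one Heisenberg relations from the commutator formula, obtain $<h>\cong V_{\hat{\h}}(1,0)$ from the simplicity of the induced module, verify the semi-conformal conditions of Proposition~\ref{p2.2} using $L(1)h=0$, and specialize to $\dim V_1=1$. Two minor remarks: the Heisenberg relations also require $h_0h=[h,h]=0$ (immediate from skew-symmetry of the Lie bracket on $V_1$, a point the paper records explicitly but your list of ingredients omits), and your identification $\omega'=\omega$ in the rank-one case via \cite[Lemma 5.1]{JL2} is a cleaner justification than the paper's rather terse assertion of the same fact.
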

\begin{proof}
Let $h \in V_1$ be as in the assumption with $h_1 h=\langle h,h\rangle \mbf{1}=\mbf{1}$. Since $V$ is $ \N-$ graded, we have $h_nh=0$ for all $n\geq 2$ and 
\begin{equation} \label{f5.6}[h_m,h_n]=\sum_{k=0}^{+\infty}
\left(
\begin{array}{lll}
m\\
k
\end{array}
\right)(h_kh)_{m+n-k}
=(h_0h)_{m+n}+m(h_1h)_{m+n-1},
\end{equation}
for $m,n\in \Z$.  Since $[v,u]=v_0(u)$ defines a Lie algebra structure on $V_1$, hence $ h_0h=[h,h]=0$.     So we have
$$[h_m,h_n]=m\langle h,h\rangle \delta_{m+n,0}\on{Id}=m\delta_{m+n,0}\on{Id}.$$
Let $\h=\C h$. This defines  a Heisenberg  Lie algebra homomorphism $\hat{\h}\rightarrow \on{End}(V)$ with $ h\otimes t^n\mapsto h_n$ and $C\mapsto \on{Id}$.(cf. Section \ref{sec3.1}).  Let $ U=<h>$ be the vertex subalgebra generated by $h$. Then  there is a vertex algebra
 homomorphism $V_{\hat{\h}}(1,0) \rightarrow V$ sending $ h(-1)\mbf{1}$ to $ h_{-1}\mbf{1}=h$. The mage of this map is exactly $ U$. Since $V_{\hat{\h}}(1,0)$ is a simple vertex algebra. Thus $U\cong V_{\hat{\h}}(1,0)$.  In the following we discuss the conformal elements. The vertex algebra $V_{\hat{\h}}(1,0)$ with the conformal vector $ \omega'=\frac{1}{2}h(-1)h(-1)\mbf{1}$  and the central charge $c=1$. 
 Its image in $U$ will still be denoted by $ \omega'$ and $ \omega'=\frac{1}{2} h_{-1}h_{-1}\mbf{1} \in V_2$. Thus $(U, \omega')$ is a vertex operator subalgebra of $(V, \omega)$.  
 We now show that this map is semi-conformal or and  $ \omega'$ is semi-conformal in $(V, \omega)$
if $ L(1)h=0$ (the condition has not yet been used!).  By using Proposition~\ref{p2.2}, we only need to check the following equations, since $(U, \omega')$ is already  a vertex operator algebra. 
\begin{align}\label{5.7}\left\{\begin{array}{llll}
L(0)\omega'=2\omega';\\
L(1)\omega'=0;\\
L(2)\omega'=\frac{1}{2}\mbf{1};\\
L'(-1)\omega'=L(-1)\omega';\\
L(n)\omega'=0, n\geq 3.
\end{array}\right.
\end{align}
 The first  is true since $ \omega'\in V_2$. The last one holds since $ V$ is $\N$-graded. The fourth one holds always.  Similar to \eqref{f5.6}, using the assumption that $ L(1)h=0$ and the fact $L(k)h=0$ for $k\geq 2 $, we have 
 \begin{equation} \label{f5.8}[L(m),h_n]=\sum_{k=-1}^{+\infty}
\binom{m+1}{k+1}(L(k)h)_{m+n-k}
=(L(-1)h)_{m+n+1}+(m+1)(L(0)h)_{m+n}.
\end{equation}
Using the fact that $ Y(L(-1)v, z)=\frac{d}{dz} Y(v, z)$ one gets $ (L(-1)v)_{n+1}=-(n+1)v_{n}$. Hence 
\[ [L(m),h_n]=-nh_{n+m}\]
for all $ m, n\in  \Z$. 
Thus 
\begin{eqnarray*}
L(2)\omega'&=&\frac{1}{2}[L(2), h_{-1}^2]\mbf{1}\\
&=& \frac{1}{2}([L(2),h_{-1}]h_{-1}+h_{-1}[L(2), h_{-1}])\mbf{1}\\
&=& \frac{1}{2}(h_{1}h_{-1}+h_{-1}h_{1})\mbf{1}=\frac{1}{2}\mbf{1}.
\end{eqnarray*}
\begin{eqnarray*}
L(1)\omega'&=&\frac{1}{2}[L(1), h_{-1}^2]\mbf{1}\\
&=& \frac{1}{2}([L(1),h_{-1}]h_{-1}+h_{-1}[L(1), h_{-1}])\mbf{1}\\
&=& \frac{1}{2}(h_{0}h_{-1}+h_{-1}h_{0})\mbf{1}=\frac{1}{2}h_{0}h_{-1}\mbf{1}=\frac{1}{2}h_{0}h=0.
\end{eqnarray*}
 
  Now assume $ \dim V_1=1$.  Then any orthonormal element $h \in V_1$ satisfies the condition $L(1)h=0$. Since $V$ is generated by $V_1$, hence $V\simeq <h>=V_{\widehat{V_1}}(1,0)$ and the conformal vector of $V$ is $\frac{1}{2}h_{-1}^2\mbf{1}$.  
 \end{proof}
 We remark that a consequence of this proposition is the following which will be repeatedly used in the proof of Theorem~\ref{thm1.4}.
 
 1) Under the assumption that $ L(1)V_1=0$, than any $ h \in V_1$ with the property that $\langle h, h\rangle =1$ defines an element $ \omega_h\in \on{Sc}(V, \omega)$;

2) Note that $ C_V(\{h\})=C_V(<h>)=C_V(C_V(C_V(<h>)))$. If  $ C_{V_1}(h)=\{ v\in V_1 \; |\; h_0(v)=0\} $ is the centralizer of $h$ in the Lie algebra $V_1$ and $ h^\perp =\{ v\in V_1 \; |\; \langle v, h\rangle =0\} $ is the subspace, then $ C_{V_1}(h)\cap h^\perp \subseteq (C_V(<h>))_1$.  In particular, let  $V_1$ be an abelian Lie algebra and $ \dim V_1\geq 2$. Then $ \on{Sc}(V, \omega)\setminus \{0, \omega\}$ is not empty if $\on{Ker}(L(1): V_1\rightarrow \C \mbf{1})$ contains an element $h$ with $ \langle h, h \rangle \neq 0$.  

3) If $ (V, \omega)$ has central charge $\neq 1$, then for any $ h\in V_1$ as in Proposition~\ref{p5.4}, then $ C_V(C_V(<h>))\neq V$. Hence, $ \on{Sc}(V, \omega)\setminus \{0, \omega\}$ is not empty. 

4) For any $ \omega'\in \on{Sc}(V, \omega)$, if $ L(1)V_1=0$, then $ L'(1)(C_V(C_V(<\omega'>))_1)=0$.
 
\begin{proofof}{\bf Proof of Theorem 1.4.} We will use induction on $\dim V_1$.  If $\on{dim}V_1=1$, the theorem is  the special case in Proposition~\ref{p5.4}. Hence $(V,\omega)\cong V_{\widehat{V_1}}(1,0)$ with central charge $1$.

Assume  $\on{dim}V_1\geq 2$. Since the bilinear form $ \langle\cdot, \cdot \rangle $ on $ V_1$ is non-degenerate, there is an $ h\in V_1$ such that $\langle h, h \rangle=1$. Since $ L(1)V_1=0$, Proposition~\ref{p5.4} implies that $\omega'=\omega_h=\frac{1}{2} h_{-1}^2\mbf{1}\in V_2$ is a semi-conformal vector of $(V,\omega)$. 
Then the assumption of the theorem implies 
\begin{equation} 
\label{f5.9} (V, \omega)\cong (C_V(<\omega_h>), \omega-\omega_h)\otimes (C_V(<\omega-\omega_h>), \omega_h)
\end{equation}
as vertex operator algebras. 

Let $ V'(h)=C_V(<\omega-\omega_h>)$ with conformal vector $\omega_h$ and $ V''(h)=C_V(<\omega_h>)$ with conformal vector $ \omega-\omega_h$.  Then, by Lemma~\ref{l6.1}. 1), both $(V'(h), \omega_h)$ and $(V'(h), \omega-\omega_h)$ satisfy the conditions of this theorem. If we can prove that  
\begin{equation}\label{f5.10}
\dim V'(h)_1<\dim V_1\quad \text{ and } \quad \dim V''(h)_1<\dim V_1,
\end{equation}
 then the induction assumption will imply that both $V'(h)$ and $V''(h)$ are Heisenberg vertex operator algebras of the specified rank and of the specified conformal vector. Thus the tensor product vertex operator algebra also has the same property.   By Lemma~\ref{lem5.3}, we only need to show that $ \omega_h\neq \omega $ when $ \dim V_1 \geq 2$. However one cannot be sure that  $ \omega_h\neq \omega$ in this case for arbitrarily chosen $ h\in V_1$.  But we can choose  $h\in V_1$ with $ \langle h, h\rangle =1$ such that $ \dim V'(h)_1$ is the smallest possible. We claim that $ \dim V'(h)_1=1$ and then $ \dim V''(h)_1=\dim V_1-1>0$. Then we are done. In the following we prove this claim. 
 
 Note that $ h\in V'(h)_1$ implies that $ \dim V'(h)_1>0$. Assume that $ \dim V'(h)_1\geq 2$. By Lemma~\ref{lem5.1}.1),  $ V'(h)$ is nondegenerate simple CFT type and is generated by $V'(h)_1$ as a vertex algebras. Thus the bilinear form $\langle\cdot, 
 \cdot \rangle$ remains nondegenerate when restricted to $ V'(h)_1$. There is an $ h'\in 
 V'(h)_1$ such that $ \langle h', h'\rangle =1$ and $ \langle h, h'\rangle =0$. Hence $h$ and $ h'$ are linearly independent. Set $ \omega_{h'}=\frac{1}{2}h'_{-1}h'_{-1}\mbf{1} \in V'(h)_2$. Then $\omega_{h'}$ is a semi-conformal vector of $V$ by Proposition~\ref{p5.4}. By using Proposition~\ref{p2.2}, $\omega_{h'}$ is also a semi-conformal vector of $V'(h)$. Thus $ \omega_{h'}\preceq \omega_{h}$. 
 We now claim that $\omega_{h'}\neq \omega_{h}$. Applying Lemma~\ref{lem5.3}, we will have $\dim C_{V'(h)}(C_{V'(h)}(<\omega_{h'}>))_1<\dim V'(h)_1$. 
Since  $ C_{V'(h)}(C_{V'(h)}(<\omega_{h'}>))=C_V(C_V(<\omega_{h'}>))=V'(h')$, 
we have a contradiction to the choice of $h$. 
Hence we must have $ \omega_h=\omega_{h'}$. Note that $V'(h)$ contains 
to vertex subalgebras $U(h)$ and $U(h')$ generated by $ h$ and $h'$ 
respectively with $ \omega_h$ and $\omega_{h'}$ being conformal 
vectors respectively as in the proof of Proposition~\ref{p5.4}  and $ h'_{1}\omega_{h'}=h'$ and $ h_{1}\omega_{h}=h$. But using 
\[ [h'_1, h_{-1}]=(h'_0h)_0+(h'_1h)_{-1}=(h'_0h)_0 \quad \text{and } \quad v_{n}\mbf{1}=0 \; \text{ for all } n\geq 0, \]
we compute 
\begin{eqnarray*}
h'&=&h'_{1}\omega_{h'}=h'_1\omega_{h}=\frac{1}{2}( [h'_1, h_{-1}]h_{-1}+h_{-1}[h'_1,h_{-1}])\mbf{1}\\
&=& \frac{1}{2}(h'_0h)_0 h_{-1}\mbf{1}=\frac{1}{2}[(h'_0h)_0, h_{-1}]\mbf{1}\\
&=&\frac{1}{2} ((h'_{0}h)_0h)_{-1}\mbf{1}=\frac{1}{2}((h'_{0}h)_0h=\frac{1}{2}[[h', h],h].
\end{eqnarray*}
 By setting $ h''=[h', h]$,  we have $ [h'',h]=2h'$ in the Lie algebra $ V_1$. 
 Exchanging $ h$ and $h'$ we also get $ [[h, h'], h']=2h$, i.e., $ [h'', h']=-2h$.   This also implies that $ h''\neq 0$ and  the linear span of   $\{h, h', h''\}$ is Lie algebra isomorphic to $ \frak{sl}_2$ with  the standard generators 
 \begin{equation} e= \frac{h+ih'}{\sqrt{2i}} \quad, f= \frac{ih+h'}{\sqrt{2i}}, \quad k=ih''.
  \end{equation}
  Thus we have 
  \[[ k,e]=2e, \quad [k,f]-2f, \quad [e,f]=k.\]
  Since the bilinear form $ \langle \cdot, \cdot \rangle $ is invariant, a direct computation shows that 
  \[\langle h'', h\rangle=\langle h'', h'\rangle =0 \quad \text{and} \quad \langle h'', h''\rangle=-2.\] 
  Then $ \omega_{h''/\sqrt{-2}}$ is also a semi-conformal vector of $ V'(h)$. If $ \omega_{h''/\sqrt{-2}}\neq \omega_h$, we are done. We shall show that this is the case. 
  
  Let $ U$ be the vertex subalgebra generated by $ \{h, h', h''\}$ in $ V'(h)$ with the conformal vector $ \omega_h$ with has central charge 1. Then $U$ is a quotient of the affine vertex algebra $ V_{\widehat{\frak{sl}}_2}(1,0)$ and there is a surjective map $ U\rightarrow L_{\widehat{\frak{sl}}_2}(1,0)$ such that the images of $ \omega_{h}$, $ \omega_{h'}$, and $\omega_{h''}$ in $ L_{\widehat{\frak{sl}_2}}(1,0)$ are 
  \[ \tilde{\omega}_{h}=\frac{1}{2}h(-1)h(-1)\mbf{1}, \quad  \tilde{\omega}_{h'}=\frac{1}{2}h'(-1)h'(-1)\mbf{1}, \quad \text{and} \quad  \tilde{\omega}_{h''}=\frac{1}{2}h''(-1)h''(-1)\mbf{1}\]
  respectively. Here we are using the notation $ h(-1)=h\otimes t^{-1}$ in the affine Lie algebra. Note that, under the basis $ \{e, f, k\}$ in $ \frak{sl}_2$, we have 
  \[ h=\frac{\sqrt{2}i}{2}(e-if), \quad  h'=\frac{\sqrt{2}i}{2}(f-ie), \quad \text{ and } \quad h''=-ik.\]
 
  Expressing three vectors $\omega_h,\omega_h',\omega_h''$  in terms of $ \{e, f, k\}$ in $L_{\widehat{\frak{sl}}_2}(1,0)=V_{\widehat{\frak{sl}}_2}(1,0)/<e(-1)^2\mbf{1}> $,  we have 
  \begin{eqnarray*}
\tilde{\omega}_{h}&=& \frac{1}{4}(f(-1)f(-1)+if(-1)e(-1)+ie(-1)f(-1))\mbf{1},\\
\tilde{\omega}_{h'}&=& \frac{1}{4}(-f(-1)f(-1)+if(-1)e(-1)+ie(-1)f(-1))\mbf{1},\\
 \tilde{\omega}_{h''\sqrt{-2}}&=&\frac{1}{4}k(-1)k(-1)\mbf{1}.
  \end{eqnarray*}
 Hence $ \tilde{\omega}_{h}-\tilde{\omega}_{h'}=\frac{1}{2}f(-1)^2\mbf{1}\neq 0$ by considering a PBW basis in $V_{\widehat{\frak{sl}}_2}(1,0)$ in the order $ \prod_r f(-n_{r})\prod_s k(-m_{s})\prod_t e(-l_{t}) $ with all $ n_r, m_s, l_t>0$ and using the fact that the weight two subspace of $<e(-1)^2\mbf{1}>$ is $ e(-1)^2\mbf{1}$. This also implies that  $ \tilde{\omega}_h$,  $ \tilde{\omega}_{h'},$ and $ \tilde{\omega}_{h''/\sqrt{-2}}$ are distinct in $ L_{\widehat{\frak{sl}}_2}(1,0)$.   This contradicts the early assumption that   $\omega_{h}=\omega_{h'}={\omega}_{h''/\sqrt{-2}}$.  Thus, we  have completed the proof of the theorem. \qed
  \end{proofof}   

\begin{remark}
1) The condition $ L(1)V_1=0$ is satisfied for most vertex operator algebras arising from Lie algebras with standard conformal structures \cite[Section 7.1.1, 7.1.2]{BF}. 

2) In Theorem 1.4,   the condition $ L(1)V_1=0$ is crucial in distinguishing the standard conformal structure $\omega_{\Lambda}$ with $ \Lambda=0$ on $ V_{\hat{\h}}(1,0)$ 
from other $ \omega_{\Lambda}$ with $ \Lambda\neq 0$.

3)   The tensor decomposition condition \eqref{e5.5}
\begin{equation*}
\forall \omega'\in \on{Sc}(V,\omega), V\simeq C_{V}(C_{V}(<\omega'>))\otimes C_{V}(<\omega'>)
\end{equation*}   is    critical in conditions to distinguishing Heisenberg vertex algebra from other vertex algebras arising from Lie algebras.  Checking this condition could be a challenge.  

 For a vertex operator algebra $(V, \omega)$ of  nondegenerated simple CFT type, by Lemma 5.1,  the condition \eqref{e5.5} implies that for an $\omega' \in\on{Sc}(V, \omega)$,   the following assertions hold:
\begin{itemize}
\item[(1)]
$C_{V}(C_{V}(<\omega'>))_1$ and $C_{V}(<\omega'>)_1$ are mutually orthogonal in $V_1$;

\item[(2)] $V_1\simeq C_{V}(C_{V}(<\omega'>))_1\oplus C_{V}(<\omega'>)_1.$
\end{itemize}

\end{remark}
On the other hand, the following Lemma will provide a way to verify the condition \eqref{e5.5}. It will also be used in the proof of Theorem~\ref{thm1.5}.

\begin{lem} \label{lem5.7} Let $(V,\omega)$ be a nondegenerate simple CFT type vertex operator algebra.
For $\omega'\in \on{Sc}(V,\omega)$, assume  $\on{dim}C_{V}(C_{V}(<\omega'>))_1\neq 0$, $\on{dim}C_{V}(<\omega'>)_1\neq 0$,  and $\dim V_1=\dim C_{V}(<\omega'>)_1 +\dim C_{V}(C_{V}(<\omega'>))_1$, then we have
\begin{itemize}
\item[1)] $V=C_{V}(<\omega'>)\otimes C_{V}(C_{V}(<\omega'>));$
\item[2)] $C_{V}(<\omega'>)=<C_{V}(<\omega'>)_1> \text{ and} \; C_{V}(C_{V}(<\omega'>))=<C_{V}(C_{V}(<\omega'>))_1>.$
\end{itemize}
\end{lem}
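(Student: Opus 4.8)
The plan is to derive both parts from the single fact recalled in the text just before the lemma, namely that $B\otimes A$ is a conformal vertex operator subalgebra of $V$, where I abbreviate $A:=C_V(<\omega'>)$ (with conformal vector $\omega-\omega'$) and $B:=C_V(C_V(<\omega'>))$ (with conformal vector $\omega'$). The first step is a bookkeeping observation: $A$ and $B$ are $\N$-graded with one-dimensional degree-zero part. Every $x\in A$ satisfies $L'(m)x=0$ for $m\geq-1$ and every $x\in B$ satisfies $(L-L')(m)x=0$ for $m\geq-1$ — these are exactly the defining conditions of the two commutants, using that $\omega'$ is the conformal vector of $<\omega'>$ and $\omega-\omega'$ that of $A$ — so the intrinsic grading operator of $A$, namely $(L-L')(0)$, and that of $B$, namely $L'(0)$, both restrict to $L(0)$. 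Being graded subspaces of the simple CFT type $V$ that contain $\mathbf 1$, they are therefore $\N$-graded with $A_0=B_0=\C\mathbf 1$. Consequently $(B\otimes A)_1=B_1\oplus A_1$ as a subspace of $V_1$, and the dimension hypothesis forces $B_1\oplus A_1=V_1$.

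For part 1), I would use that $V$ is generated by $V_1$: since $V_1=B_1\oplus A_1$ is contained in the vertex subalgebra $B\otimes A$, one gets $V=<V_1>\subseteq B\otimes A\subseteq V$, hence $V=C_V(<\omega'>)\otimes C_V(C_V(<\omega'>))$. For part 2), I would first check that the subalgebras $<B_1>\subseteq B$ and $<A_1>\subseteq A$ commute and meet trivially: for $u\in B_1$ and $v\in A_1$ the definition of $B=C_V(A)$ gives $v_nu=0$ for all $n\geq0$, and any element of $B\cap A$ commutes with both $\omega'$ and $\omega-\omega'$, hence with $\omega$, hence lies in $V_0=\C\mathbf 1$. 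Thus $<B_1\cup A_1>=<B_1>\otimes<A_1>$ as vertex operator algebras, and since $V$ is generated by $V_1=B_1\oplus A_1$ this yields $V=<B_1>\otimes<A_1>$ with $<B_1>\subseteq B$ and $<A_1>\subseteq A$. Comparing with part 1): inside $V_n=\bigoplus_k B_k\otimes A_{n-k}$ the subspace $\bigoplus_k (<B_1>)_k\otimes (<A_1>)_{n-k}$ is the whole of $V_n$ and the inclusion is componentwise, so directness forces $(<B_1>)_k\otimes (<A_1>)_{n-k}=B_k\otimes A_{n-k}$ for all $k$ and $n$; taking $k=n$ gives $(<B_1>)_n=B_n$ and $k=0$ gives $(<A_1>)_n=A_n$, i.e. $B=<B_1>$ and $A=<A_1>$.

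All the routine computation sits in the first step; the actual content is the implication chain: $V$ generated by $V_1$ forces $V=B\otimes A$, and then each tensor factor is generated by its own weight-one subspace. The main obstacle I anticipate is making the last point rigorous: it rests on knowing that $<B_1>$ and $<A_1>$ are mutually commuting with intersection $\C\mathbf 1$, so that the subalgebra they jointly generate genuinely decomposes as their tensor product — precisely where the definition of the iterated commutant and the identification of the conformal vectors $\omega'$ and $\omega-\omega'$ enter. I also note that the hypothesis that $V$ be generated by $V_1$ is used throughout and should appear among the assumptions.
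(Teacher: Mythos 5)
Your proof is correct and takes essentially the same approach as the paper's: both form the conformal tensor-product subalgebra built from $C_V(<\omega'>)$ and $C_V(C_V(<\omega'>))$ (resp.\ from their weight-one subspaces), use the dimension hypothesis to force $V_1$ to be the direct sum of the two weight-one pieces, and then invoke generation by $V_1$; your graded comparison for part 2) merely spells out what the paper compresses into ``will follow from the definition and Lemma 5.1.'' Your remark that the hypothesis that $V$ is generated by $V_1$ must be added to the statement is also correct --- the paper's own proof uses it, and it does hold in the application to Theorem 1.5.
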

\begin{proof}
Note that $<C_{V}(<\omega'>)_1>\otimes<C_{V}(C_{V}(<\omega'>))_1>$ is a vertex operator subalgebra of $V$ with the same conformal vector  $\omega$ and weight one subspace being $ C_{V}(<\omega'>)_1\oplus C_{V}(C_{V}(<\omega'>))_1$. The assumption of the dimensions of weight one subspaces implies 
 $$C_{V}(<\omega'>)_1\oplus C_{V}(C_{V}(<\omega'>))_1=V_1.$$
Since $ V$ is generated by $ V_1 $ and the centres of $C_{V}(<\omega-\omega'>)$ and $ C_{V}(<\omega'>)$ are all one dimensional subspace $ \C\mbf{1}$,   then the assertions 1) and 2) will follow from the definition and Lemma~\ref{l6.1}. 
\end{proof}

\begin{proofof}{\bf Proof of Theorem 1.5.}
For notational convenience, we denote $U(\omega')=C_{V}(C_{V}(<\omega'>))$ for each $\omega'\in \on{Sc}(V,\omega)$.  Then $U(\omega-\omega')=C_{V}(<\omega'>)$.
 Note that $U(\omega^1)\otimes U(\omega^2-\omega^1)\otimes\cdots \otimes U(\omega^d-\omega^{d-1})$ is a conformal vertex operator subalgebra  of $V$ with the same conformal vector $\omega$. We know that
$U(\omega^1)_1\oplus U(\omega^2-\omega^1)_1\oplus \cdots \oplus U(\omega^d-\omega^{d-1})_1$ is a subspace of $V_1$. Since $\on{dim}U(\omega^{i}-\omega^{i-1})_1\neq 0$ for $i=1,\cdots,d$, then $\on{dim}U(\omega^{i}-\omega^{i-1})_1\geq 1$. Hence
$$V_1=U(\omega^1)_1\oplus U(\omega^2-\omega^1)_1\oplus \cdots \oplus U(\omega^d-\omega^{d-1})_1.$$  Thus $ \dim U(\omega^i-\omega^{i-1})_1=1$.  By  the given chain condition  and an induction on $d$  using  Lemma~\ref{lem5.7}, we have  $$V=<V_1>=<U(\omega^1)_1>\otimes <U(\omega^2-\omega^1)_1>\otimes\cdots \otimes <U(\omega^d-\omega^{d-1})_1>
.$$  Thus $V=U(\omega^1)\otimes U(\omega^2-\omega^1)\otimes\cdots \otimes U(\omega^d-\omega^{d-1})$ and  $U(\omega^{i}-\omega^{i-1})=<U(\omega^{i}-\omega^{i-1})_1>$ for $i=1,\cdots,d$. Since  $\on{dim}U(\omega^{i}-\omega^{i-1})_1=1$ and the assumption $L(-1)V_1=0$, Proposition~\ref{p5.4} implies that   $U(\omega^{i}-\omega^{i-1})$ is isomorphic to a Heisenberg vertex operator algebra $V_{\hat{\h}}(1,0)$ with $\dim \h =1$ and conformal vector of the form $ \omega _{\Lambda}$ with $ \Lambda=0$.  Therefore, $V$ is isomorphic to the Heisenberg vertex operator algebra $V_{\widehat{\h}}(1,0)$ with $ \h=V_1$ and the conformal vector of the form $ \omega_{\Lambda}$ with $ \Lambda =0$. \qed
\end{proofof}
\begin{remark} Theorem~\ref{thm1.5} can also be proved using the argument used in the proof of  Theorem~\ref{thm1.4}. The advantage of Theorem~\ref{thm1.5} is that one does not have to verify the condition \eqref{e5.5} for all $ \omega' \in \on{Sc}(V,\omega)$.  The condition of Theorem~\ref{thm1.5} is more convenient to verify when one uses a specific construction of $V$ such as most of the known examples.  Once again, the condition $ L(1)V_1=0$ is to ensure that the condition $ \Lambda=0$ for the conformal vector $\omega_{\Lambda}$. When the condition $ L(1)V_1=0$ is removed, one expects that $V$ is still isomorphic to $ V_{\hat{\h}}(1, 0)$ with conformal vector $ \omega_{\Lambda}$ and $ \Lambda$ needs not be zero. More discussions on general Heisenberg vertex operator algebras  will appear in an upcoming paper. 
\end{remark}
\section{The conclusion and further discussion}
This paper intends to understand properties of vertex operator algebras in terms of a certain geometric objects attached to them. For a general vertex operator algebra $(V,\omega)$, Theorem 1.1 tells us the set $\on{Sc}(V,\omega)$ of semi-conformal vectors of $(V,\omega)$ is an affine algebraic variety and it has a partial ordering (Definition 2.7) and an involution map induced by the coset construction.

To  describe the structure of the variety $\on{Sc}(V,\omega)$  for a general vertex operator algebra $(V,\omega)$ and to see how this variety determines  the vertex operator algebra $(V,\omega)$,  Heisenberg vertex operator algebras $(V_{\widehat{\h}}(1,0),\omega_{\Lambda})$ for $\Lambda\in \C^{d}$ are the first examples to look at. In fact, different $\Lambda$'s result in non-isomorphic conformal structures on the same Heisenberg vertex algebra $V_{\widehat{\h}}(1,0)$. In Section 3, we concentrate on the case $\Lambda=0$ and then described the orbit structure of $\on{Sc}(V_{\widehat{\h}}(1,0),\omega)$  under the action of the automorphic group $\on{Aut}(V_{\widehat{\h}}(1,0),\omega)$(Theorem 1.2).
Moreover, we study the maximal semi-conformal subalgebras determined by semi-conformal vectors. These subalgebrs are all Heisenberg-type vertex operator algebras and the Heisenberg vertex operator algebra $(V_{\widehat{\h}}(1,0),\omega)$ can be decomposed into the tensor product of any maximal semi-conformal subalgebra and its commutant in $(V_{\widehat{\h}}(1,0),\omega)$ (Theorem1.3). Moreover, based on properties of affine algebraic variety $\on{Sc}(V_{\widehat{\h}}(1,0),\omega)$,  for a nondegenerate simple CFT-type vertex operator algebra $(V,\omega)$ generated by $V_1$, we proved $(V,\omega)$ is  isomorphic to Heisenberg vertex operator algebra $(V_{\widehat{V_1}}(1,0),\omega)$ with the assumption of the condition $L(1)V_1=0$(Theorem1.4 and Theorem 1.5). Finally, we give two characterizations of $(V_{\widehat{\h}}(1,0),\omega)$ by its variety of semi-conformal vectors. 

For the Heisenberg vertex algebra $V_{\widehat{V_1}}(1,0)$, different $\Lambda'$s determine different conformal vectors $\omega_{\Lambda}$, and then  give non-isomorphic Heisenberg vertex operator algebras  $(V_{\widehat{\h}}(1,0),\omega_{\Lambda})$. For $\Lambda\neq 0$, the automorphic group of Heisenberg vertex operator algebra  $(V_{\widehat{\h}}(1,0),\omega_{\Lambda})$ will change and thus determining the orbit structure of their varieties of semi-conformal vectors will require much more work than that of the case $\Lambda= 0$. That will be an upcoming  work. Furthermore, for Heisenberg vertex operator algebras  $(V_{\widehat{\h}}(1,0),\omega_{\Lambda})$ with $\Lambda\neq 0$, we will work for  characterizations of them. As we have  mentioned in Remark 5.5, the condition $L(1)V_1=0$ is critical in distinguishing conformal structure $\omega_{\Lambda}$ with $\Lambda= 0$ on $V_{\widehat{\h}}(1,0)$ from other $\omega_{\Lambda}$ with $\Lambda\neq 0$, since Heisenberg vertex operator algebras $(V_{\widehat{\h}}(1,0),\omega_{\Lambda})$ for $\Lambda\neq 0$ will not satisfy the condition $L(1)V_1=0$.  Characterizing  Heisenberg vertex operator algebras  $(V_{\widehat{\h}}(1,0),\omega_{\Lambda})$ with $\Lambda\neq 0$ shall be more difficult  than   that of the case $\Lambda= 0$ and but a more meaningful problem.

  For each $\omega'\in \on{Sc}(V_{\widehat{\h}}(1,0),\omega)$, we want to
describe the set of all semi-conformal subalgebras with   $\omega'$ being the conformal vector.  Each of such semi-conformal subalgebras is a  conformal extension of $<\omega'>$ in $V_{\widehat{\h}}(1,0)$.  We  denote this set  by $\pi^{-1}(\omega')$ which is exactly the set of all conformal subalgebras of the smaller Heisenberg vertex operator algebra $V_{\widehat{\on{Im}\mathcal{A}_{\omega'}}}(1,0)$.  It is an interesting question to determine this set.  This depends on  the $<\omega'>$-module structure of  $V_{\widehat{\on{Im}\mathcal{A}_{\omega'}}}(1,0)$.   For affine vertex operator algebras such that the conformal subalgebra $<\omega'>$ is rational, then decomposing  each of the members in $\pi^{-1}(\omega')$ as direct sums of irreducible modules was the motivation for the study of semi-conformal subalgebras.

Similar to the Heisenberg case, same question can also be asked   for an affine vertex operator algebra $V$ corresponding to a finite dimensional simple Lie algebra $ \frak{g}$,  the variety $\on{Sc}(V, \omega)$ is closely related  to the  Lie algebra $\frak{g}$ with the fixed level $k$. In the case that $(V, \omega)$ is a lattice vertex operator algebra corresponding to a lattice $ (L, \langle\cdot, \cdot \rangle )$, the variety $\on{Sc}(V, \omega)$ is determined completely by the lattice structure.   In the upcoming work,  we shall  also describe invariants of affine  vertex operator algebras and lattice vertex operator algebras in terms of the affine varieties $\on{Sc}(V,\omega)$.

\end{document}